\newtheorem{theorem}{Theorem}[section]
\newtheorem{lemma}[theorem]{Lemma}
\newtheorem{proposition}[theorem]{Proposition}
\newtheorem{corollary}[theorem]{Corollary}
\newtheorem*{theorem*}{Theorem}
\theoremstyle{definition}
\newtheorem{definition}[theorem]{Definition}
\newtheorem*{definition*}{Definition}
\theoremstyle{remark}
\newtheorem{example}[theorem]{Example}
\DeclareSymbolFont{tipa}{T3}{cmr}{m}{n}
\DeclareMathAccent{\bend}{\mathalpha}{tipa}{16}
\renewcommand{\H}{\mathcal H}
\newcommand{\Q}{\mathcal Q}
\newcommand{\V}{\mathcal V}
\newcommand{\W}{\mathcal W}
\newcommand{\X}{\mathcal X}
\newcommand{\Y}{\mathcal Y}
\newcommand{\Z}{\mathcal Z}
\newcommand{\BB}{\mathbb B}
\newcommand{\CC}{\mathbb C}
\newcommand{\NN}{\mathbb N}
\newcommand{\RR}{\mathbb R}
\newcommand{\fF}{\mathfrak F}
\renewcommand{\>}{\rangle}
\renewcommand{\:}{\colon}
\newcommand{\To}{\rightarrow}
\newcommand{\subsetof}{\subseteq}
\newcommand{\tensor}{\otimes}
\newcommand{\iso}{\cong}
\newcommand{\union}{\cup}
\newcommand{\EV}{\quad\Longleftrightarrow\quad}
\newcommand{\IMP}{\quad\Longrightarrow\quad}
\newcommand{\ran}{\mathop{\mathrm{ran}}}
\renewcommand{\above}{\sqsupseteq}
\newcommand{\below}{\sqsubseteq}
\newcommand{\cat}{\mathbf}
\newcommand{\atomof}{\text{\raisebox{1.2pt}{\smaller\,$\propto$\,}}}
\newcommand{\Tr}{\mathrm{Tr}}
\newcommand{\At}{\mathrm{At}}
\newcommand{\Eval}{\mathrm{Eval}}
\newcommand{\Set}{\mathbf{Set}}
\newcommand{\POS}{\mathbf{POS}}
\newcommand{\qRel}{\mathbf{qRel}}
\newcommand{\qSet}{\mathbf{qSet}}
\newcommand{\qPOS}{\mathbf{qPOS}}
\newcommand{\FdHilb}{\mathbf{FdHilb}}
\newcommand{\Pow}{\mathrm{Pow}}
\newcommand{\counit}{\epsilon}
\newcommand{\Inc}{\mathrm{Inc}}
\newcommand{\qPow}{\mathrm{qPow}}
\title{A category of quantum posets}
\author{Andre Kornell}
\address{Department of Computer Science, Tulane University, New Orleans, Louisiana 70118}
\email{akornell@tulane.edu}
\author{Bert Lindenhovius}
\address{Institute of Mathematical Methods
in Medicine and Databased Modelling, Johannes \mbox{Kepler} University, Linz, Austria}
\email{albertus.lindenhovius@jku.at}
\author{Michael Mislove}
\address{Department of Computer Science, Tulane University, New Orleans, Louisiana 70118}
\email{mislove@tulane.edu}
\thanks{This research was supported by AFOSR under MURI grant FA9550-16-1-0082, entitled “Semantics, Formal Reasoning, and Tool Support for Quantum Programming”. The second author is currently supported by the bilateral Austrian Science Fund (FWF)
Project I 4579-N and Czech Science Foundation (GA\v{C}R) Project 20-09869L “The many facets of orthomodularity”.}
\begin{document}

\begin{abstract}
We investigate a category of quantum posets that generalizes the category of posets and monotone functions. Up to equivalence, its objects are hereditarily atomic von Neumann algebras equipped with quantum partial orders in Weaver's sense. We show that this category is complete, cocomplete and symmetric monoidal closed. As a consequence, any discrete quantum family of maps from a discrete quantum space to a partially ordered set is canonically equipped with a quantum preorder. In particular, the quantum power set of a quantum set is canonically a quantum poset. We show that each quantum poset embeds into its quantum power set in complete analogy with the classical case.
\end{abstract}

\maketitle

This paper defines a category of quantum posets, along the lines of the category of compact quantum spaces \cite{Woronowicz}. This falls under the rubric of `noncommutative’ mathematics, a program to investigate noncommutative operator algebras from the premise that, like commutative operator algebras, they consist of complex-valued functions on `quantum' spaces of various kinds \cite{GraciaBondiaVarillyFigueroa}. Thus, a quantum space is intuitively the spectrum of a noncommutative operator algebra. These quantum spaces are not spaces in the literal sense: they are not sets of points equipped with additional structure. Rather, they are just a way of speaking about noncommutative operator algebras, and in that sense, one may say that they don't really exist. In this, they are comparable to locales \cite{Johnstone}.

Formally, quantum spaces are often defined to be the objects of the opposite of some category of operator algebras, echoing Gelfand duality. For example, quantum sets may be identified with the objects of the opposite of the category of hereditarily atomic von Neumann algebras and unital normal $*$-homomorphisms \cite{Kornell2}. The fact that we work in the quantum setting means that we work with generalizations of definitions that are familiar in the classical setting, but which can be justified only informally; a quantum partial order is not a partial order in the literal sense. Our basic building blocks are hereditarily atomic von Neumann algebras and Weaver's quantum relations \cite{Weaver1}. 

A \emph{quantum relation} on a von Neumann algebra $M\subseteq L(H)$ is an ultraweakly closed subspace $V$ of the space $L(H)$ of bounded operators on the Hilbert space $H$ such that $M'\cdot V\cdot M'\subseteq V$. Weaver distilled this notion of a quantum relation from his work with Kuperberg on quantum metrics \cite{KuperbergWeaver} and observed that a number of familiar conditions on binary relations such as reflexivity, symmetry, antisymmetry and transitivity have natural analogues for these quantum relations. Hence, quantum relations yield a method for finding quantum generalizations of several classes of mathematical structures. One can define a quantum generalization of functions in this way. The resulting category of von Neumann algebras and quantum functions turns out to be dual to the category of von Neumann algebras and normal $*$-homomorphisms \cite{Kornell-qf}. We give some examples of known noncommutative structures that can be described in the framework of quantum relations. 

Quantum graphs \cite{Weaver1}*{Definition 2.6(d)}\cite{DuanSeveriniWinter} have been the subject of recent research interest \cite{Stahlke, Weaver3, MustoReutterVerdon, BrannanGanesanHarris, ChirvasituWasilewski}. These quantum graphs are very similar to the quantum posets that we study here, and many of the techniques that we use should apply to a category of quantum graphs that is similarly defined. For example, we expect that the existence of a quantum graph homomorphism \cite{MancinskaRoberson} between two finite simple graphs should have a natural formulation in a category of quantum graphs via Lemma \ref{lem:monoidal closure lemma} or a variant. Quantum graph homomorphisms are the founding example in the study of winning nonsignaling strategies for synchronous games \cite{MancinskaRobersonVaritsiotis,  PaulsenSeveriniStahlkeTodorovWinter,  AtseriasMancinskaRobersonSamalSeveriniVarvitsiotis}.
However, the relevant graph homomorphisms respect an irreflexive binary relation, whereas  monotone maps -- the relevant quantum poset morphisms -- respect a reflexive binary relation. As a consequence, many of the arguments in the present paper do not directly apply to this category of quantum graphs.

Quantum metric spaces were introduced by Kuperberg and Weaver \cite{KuperbergWeaver} and form another example of a noncommutative structure that can be described in terms of quantum relations. In fact, the notion of a quantum relation \cite{Weaver1} was extracted by Weaver from this work with Kuperberg. They observed that on any quantum graph, one can define a quantum metric that is analogous to the standard graph metric. As one example of these quantum graph metrics they identified quantum Hamming distance, but they also observed that the operator systems used by Knill, Laflamme and Viola as an error model \cite{KnillLaflammeViola} yield quantum graph metrics. Further, they made more connections with existing noncommutative structures by showing that any spectral triple $(A,H,D)$ induces a quantum metric on $A''$, just as any Riemannian metric on a manifold induces a metric on the underlying topological space. 


Our main motivation for investigating the category of quantum partial orders lies in the problem of constructing denotational semantics for quantum programming languages. Here, a denotational semantics of a programming language is a translation of any phrase in the programming language to a mathematical function in such a way that this function is the composition of the functions that correspond to subphrases \cite{ScottStrachey}. Since debugging quantum programs directly is impractical, denotational semantics provides an alternative method for the verification of quantum programs. Complete partial orders, i.e., those partial ordered sets in which the supremum of any monotonically ascending sequence exists, form a category used in the denotational semantics of ordinary programming languages with recursion. Hence, it is plausible that quantizing complete partial orders yields the appropriate category for the semantics of quantum programming languages with recursion. This quantization of complete partial orders requires first establishing the categorical properties of quantum posets. It will be shown in subsequent work that this indeed leads to a satisfying denotational model of quantum programming languages with recursion; see \cite{KornellLindenhoviusMislove}.

Another motivation for the study of the categorical properties of quantum posets is the quantum power set, which is the main application in this contribution. Just like ordinary power sets are equipped with a natural order, the inclusion relation, we show that the quantum power set is equipped with a natural quantum order, which requires various categorical techniques. It is to be expected that the quantum power set and this natural order will play an important role in a possible definition of quantum topological spaces that are not necessarily locally compact Hausdorff, generalizing Kuperberg and Weaver's quantum metric spaces.

For intuition, we first consider a quantum relation on the von Neumann algebra $M_n(\CC)$, which is just a subspace $V$ of $ M_n(\CC)$. Intuitively, $V$ relates two vector states $x_1, x_2 \in \CC^n$ if $\<x_2 | v x_1\> \neq 0$ for some $v \in V$. This intuition underlies the first appearance of quantum relations in the special case of quantum graphs \cite{DuanSeveriniWinter}, where Duan, Severini and Winter draw an analogy between the confusibility graph of a classical channel and an operator system obtained from the Kraus operators of a quantum channel. 

A quantum relation $V$ on $M_n(\CC)$ is a \emph{quantum preorder} if $1 \in V$ and $v_1 v_2 \in V$ for all $v_1, v_2 \in V$, in other words, if $V$ is a unital subalgebra of $M_n(\CC)$ \cite{Weaver1}*{Definition 2.6(b)}. Physically, a quantum preorder may be understood as encoding the possibility of transitioning from one state of a quantum system to another in multiple steps, each taken from some set of quantum channels \cite{Weaver2}*{Section 1}. A system that is prepared in vector state $x_1 \in \CC^n$ may later be measured to be in vector state $x_2 \in \CC^n$ if and only if $\<x_2 | v x_1\> \neq 0$ for some $v \in V$. Because the number of individual transitions is arbitrary, the subspace $V$ is an algebra, and because that number maybe zero, the subspace $V$ should contain the identity.

Identifying states that are equivalent in this regard corresponds to viewing $V$ as a quantum relation on the von Neumann algebra $M:= (V\cap V^\dagger)'$, where $\vphantom M^\dagger$ denotes the conjugate transpose and $\vphantom M '$ denotes the commutant. By the double commutant theorem, $V \cap V^\dagger = M'$; this is a quantum generalization of antisymmetry because $M'$ is the identity quantum relation on $M$. This motivates the definition that a \emph{quantum partial order} $V$ on a von Neumann algebra $M\subsetof M_n(\CC)$ is a unital subalgebra $V \subsetof M_n(\CC)$ that satisfies $V \cap V^\dagger = M'$. Indeed, if $V$ is a quantum partial order on $M$, then $M = (V \cap V^\dagger)'$.

If $M$ is commutative, then the quantum partial orders on $M$ are in one-to-one correspondence with partial orders on the set of pure states of $M$. In the noncommutative case, there is no such one-to-one correspondence. A quantum partial order $V$ on $M$ should be understood intuitively as being a fictitious order on the fictitious Gelfand spectrum of $M$, a finite quantum space in the sense of Woronowicz \cite{Woronowicz}.

The given physical intuition for quantum posets also explains their application to the semantics of recursion in quantum computation. Viewing each quantum partial order as encoding the possible transitions of a nearly static physical system, we may interpret it as an information order with higher states carrying less information about the initial state of the system than lower ones. Information orders are used to model recursion in classical computation \cite{LarsenWinskel, Scott}. Quantum partial orders may similarly be used to model recursion in quantum computation, and our results on the category of quantum posets serve to underpin this application \cite{KornellLindenhoviusMislove}.

A von Neumann algebra equipped with a quantum partial order could be called a quantum poset, in which case the von Neumann algebra plays the role of a generalized set. Of course, von Neumann algebras are typically understood as quantum generalizations of measure spaces, not sets. However, hereditarily atomic von Neumann algebras, i.e., those of the form $ \bigoplus_{i \in I} M_{d_i}(\CC)$, may be viewed as a quantum generalization of discrete topological spaces \cite{PodlesWoronowicz} or equivalently, sets \cite{Kornell2}. Thus, in this paper, we investigate hereditarily atomic von Neumann algebras that are equipped with a quantum partial order.

More precisely, we investigate an equivalent class of objects. Inspired by the very simple form of a hereditarily atomic von Neumann algebra, we may define a quantum set $\X$ to be simply a set of finite-dimensional Hilbert spaces, called its atoms \cite{Kornell2}*{Definition 2.1}. To each quantum set $\X$, we associate the hereditarily atomic von Neumann algebra $\ell^\infty(\X):= \bigoplus\{L(X) : X\text{ is an atom of }\X\}$; this is the equivalence between the two notions. Intuitively, the atoms of $\X$ are not its elements but rather its indecomposable subsets. One imagines that a quantum set consisting of a single $d$-dimensional atom in fact consists of $d^2$ elements that are inextricably clumped together. For this reason, we use the notation $X\atomof\X$ to express that $X$ is an atom of $\X$.

In the special case of hereditarily atomic von Neumann algebras, Weaver's quantum relations also have a convenient, explicit characterization \cite[A.2]{Kornell3}. A binary relation $R$ from a quantum set $\X$ to a quantum set $\Y$ may be defined to be simply a choice of subspaces $R(X,Y) \subsetof L(X,Y)$, for atoms $X$ of $\X$ and $Y$ of $\Y$ \cite{Kornell2}*{Definition 3.1}. Together, quantum sets and the binary relations between them form a dagger-compact category \cite{Kornell2}*{Theorem 3.6}. A dagger-compact category is a rigid symmetric monoidal category equipped with an involution that is compatible with its monoidal structure \cite{AbramskyCoecke}*{Definition 12}. This dagger-compact category, which we denote by $\cat{qRel}$, forms the basic setting of this paper.

The structure of the category $\qRel$ may be conveniently defined by intuiting each quantum relation as a matrix of operator subspaces. Indeed, if $\X$ and $\Y$ have finitely many atoms, $X_1, \ldots, X_n$ and $Y_1, \ldots, Y_m$, respectively, then a binary relation $R$ from $\X$ to $\Y$ can be pictured as a matrix whose columns correspond to the atoms of $\X$ and whose rows correspond to the atoms of $\Y$:
\begin{equation*}
\begin{aligned}
R =
\left[
\begin{matrix}
R(X_1, Y_1) & R(X_2, Y_1) & \cdots & R(X_n, Y_1) \\
R(X_1, Y_2) & R(X_2, Y_2) & \cdots & R(X_n, Y_2) \\
\vdots & \vdots & \ddots & \vdots \\
R(X_1, Y_m) & R(X_2, Y_n) & \cdots & R(X_n, Y_m) 
\end{matrix}
\right]
\end{aligned}\;.
\end{equation*}
If $\X$ and $\Y$ have infinitely many atoms, then a binary relation $R$ from $\X$ to $\Y$ can be intuited similarly as an infinite matrix. 

Basic operations on binary relations between quantum sets are then loosely analogous to basic operations on matrices. The composition of a binary relation $R$ from $\X$ to $\Y$ and a binary relation $S$ from $\Y$ to $\Z$ is defined by $(S \circ R)(X, Z) = \bigvee_{Y \atomof \Y} S(Y,Z) \cdot R(X, Y)$, where $\cdot$ denotes the obvious product of operator subspaces and $\bigvee$ denotes their obvious sum. Similarly, the adjoint of a binary relation $R$ from $\X$ to $\Y$ is defined by $R^\dagger(Y,X) = R(X,Y)^\dagger$, which recalls the definition of the Hermitian adjoint $\dagger$. A number of basic operations can be defined entry by entry, including $(\neg R)(X, Y) = R(X,Y)^\perp$, $(R_1 \wedge R_2)(X, Y) = R_1(X, Y) \cap R_2(X,Y)$, and $(R_1 \vee R_2)(X,Y) = R_1(X,Y) \vee R_2(X,Y)$. This choice of symbols reflects the fact that binary relations from $\X$ to $\Y$ form an orthomodular lattice with $R_1 \leq R_2$ if and only if $R_1(X,Y) \subseteq R_2(X,Y)$ for all atoms $X \atomof \X$ and $Y \atomof \Y$.

We now define quantum posets and monotone functions in terms of this structure, and state the main result of the paper:

\begin{definition*}
A quantum poset is a quantum set $\X$ equipped with a binary relation $R$ on $\X$ that satisfies $R \circ R \leq R$ and $R \wedge R^\dagger = I_\X$, where $I_\X$ is the identity binary relation on $\X$, i.e., the relation defined by $I_\X(X,X)=\CC 1_X$ for each atom $X$ of $\X$, and vanishing elsewhere.
\end{definition*}
\noindent Here, $R \circ R \leq R$ expresses the fact that $R$ is transitive, while $R \wedge R^\dagger = I_\X$ indicates that $R$ is reflexive. 

\begin{definition*}
A monotone function from a quantum poset $(\X,R)$ to a quantum poset $(\Y,S)$ is a binary relation $F$ from $\X$ to $\Y$ such that $F^\dagger \circ F \geq I_\X$, $F \circ F^\dagger \leq I_\Y$ and $F \circ R \leq S \circ F$.
\end{definition*}
\noindent In the above definition, the first inequality expresses that the relation $F$ is defined everywhere, and the second expresses that it is single-valued. Finally, the last inequality expresses the monotonicity of $F$. 

\begin{theorem*}
The category $\cat{qPOS}$ of quantum posets and monotone functions is complete, cocomplete and symmetric monoidal closed. The full subcategory of those quantum posets that have only one-dimensional atoms is equivalent to the category of posets and monotone functions.
\end{theorem*}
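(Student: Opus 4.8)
The plan is to organize everything around the forgetful functor $U\colon \cat{qPOS} \to \cat{qSet}$ that discards the order, and to exploit the fact (established in the cited work on quantum sets and available in the function subcategory of $\cat{qRel}$) that $\cat{qSet}$ is already complete, cocomplete and symmetric monoidal closed. For each universal construction I would first form the corresponding object on the underlying quantum set and then equip it with a canonically induced quantum partial order, verifying the three defining (in)equalities. The key simplification is that all of the relevant conditions — $R\circ R \leq R$, $R \wedge R^\dagger = I_\X$, and the monotone-map conditions $F^\dagger\circ F \geq I_\X$, $F\circ F^\dagger \leq I_\Y$, $F\circ R \leq S\circ F$ — are inequalities in the modular orthomodular hom-lattices of $\cat{qRel}$, so that checking universal properties reduces to the monotone lattice calculus (bifunctoriality of $\circ$ and $\otimes$, daggering, and supremum-preservation of composition) rather than any pointwise reasoning.

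For limits I expect $U$ to create them. Given a diagram of quantum posets $(\X_i, R_i)$, take its limit $\X$ in $\cat{qSet}$ with projections $\pi_i\colon \X \to \X_i$ and define the order $R := \bigwedge_i\, \pi_i^\dagger \circ R_i \circ \pi_i$, the coarsest relation making every $\pi_i$ monotone. Transitivity and antisymmetry of $R$ follow from the corresponding properties of the $R_i$ together with the function identities $\pi_i\circ\pi_i^\dagger \leq I$ and the joint-monomorphism identity $\bigwedge_i \pi_i^\dagger\circ\pi_i = I_\X$ for a limit cone; the universal property is then immediate, since a cone of monotone maps is exactly a cone in $\cat{qSet}$ whose mediating function is monotone for this meet order. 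This specializes to products (the tensor product of the underlying quantum sets with the product order) and equalizers (the underlying equalizer with the restricted order).

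Colimits are the crux, because the antisymmetry equation $R\wedge R^\dagger = I_\X$ is destroyed by naive quotients, exactly as in the classical case. My plan is to factor the problem through the larger category $\cat{qPreord}$ of quantum \emph{preorders}, in which one drops antisymmetry and keeps only $I_\X \leq R$ and $R\circ R \leq R$. In $\cat{qPreord}$ a colimit is built by equipping the underlying $\cat{qSet}$-colimit with the reflexive–transitive closure $\bigvee_{n\geq 0} R^{\circ n}$ of the relation generated by the cocone; this join exists because each hom-lattice is complete, and it is transitive precisely because composition preserves suprema in the enrichment. One then exhibits $\cat{qPOS}$ as a reflective subcategory of $\cat{qPreord}$ via \emph{quantum antisymmetrization} — quotienting a quantum preorder by the quantum equivalence relation $R\wedge R^\dagger$ — and obtains colimits in $\cat{qPOS}$ by applying this reflector to colimits in $\cat{qPreord}$; coproducts, by contrast, are easy, given by the disjoint union of underlying quantum sets with the relation equal to $R_i$ within each summand and $0$ across summands. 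The main obstacle, and the step I expect to require the most work, is making quantum antisymmetrization precise and proving it left adjoint to the inclusion: one must check that $R\wedge R^\dagger$ is a genuine quantum equivalence relation, that the quotient quantum set exists and remains hereditarily atomic, and that the descended relation is a quantum partial order with the correct universal mapping property.

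For the symmetric monoidal closed structure I would transport the monoidal product of $\cat{qRel}$, setting $(\X,R)\otimes(\Y,S) := (\X\otimes\Y,\, R\otimes S)$; here transitivity of $R\otimes S$ is bifunctoriality of $\otimes$ against $\circ$, while antisymmetry rests on the distributivity $(R\otimes S)\wedge(R^\dagger\otimes S^\dagger) = (R\wedge R^\dagger)\otimes(S\wedge S^\dagger)$, which I would isolate and prove as a separate lemma. The internal hom is the quantum set of monotone functions carrying the pointwise order, and the required tensor–hom adjunction, restricted to monotone maps, is exactly the content of Lemma \ref{lem:monoidal closure lemma}. Finally, for the classical comparison, when every atom is one-dimensional a quantum set is just a set, a binary relation assigns to each pair a subspace of $L(\CC,\CC)\iso\CC$ and hence is an ordinary relation, and the conditions $R\circ R \leq R$ and $R\wedge R^\dagger = I_\X$ unwind to transitivity, reflexivity and antisymmetry while the monotone-map conditions unwind to totality, single-valuedness and order-preservation; this yields a full, faithful, and essentially surjective functor onto the category of posets and monotone maps, completing the proof.
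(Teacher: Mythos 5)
Your overall architecture for limits and for the monoidal structure matches the paper's, but your guiding premise --- that every verification reduces to ``monotone lattice calculus rather than any pointwise reasoning'' --- fails at precisely the point where the real work lies. The identity $\bigwedge_i \pi_i^\dagger \circ \pi_i = I_\X$ is \emph{not} a formal consequence of the limit cone being jointly monic; in the proof of Theorem \ref{thm:qPos is complete} it is established by contradiction using Lemma \ref{lem:equivalence relation lemma}, which, given a quantum equivalence relation $E \neq I_\X$, constructs a self-adjoint unitary function $G \neq I_\X$ with $G \leq E$ by exhibiting a nontrivial projection or partial isometry inside a finite-dimensional C*-algebra --- an irreducibly atom-by-atom argument. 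The same lemma is needed again to show that the canonical preorder on the internal hom is antisymmetric (Theorem \ref{thm:qPOS is monoidal closed}); your sketch omits this difficulty both times. A second concrete error: the categorical product in $\cat{qPOS}$ is \emph{not} the tensor product with the product order. The paper is explicit that $\times$ is not the category-theoretic product in $\cat{qSet}$ (most pairs of functions into the factors are incompatible and induce no map into $\X \times \Y$), so your claimed specialization of limits to products is wrong; products are instances of the general limit construction and look quite different from $(\X\times\Y, R\times S)$.

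For cocompleteness you take a genuinely different route --- colimits of quantum preorders via reflexive--transitive closure followed by a ``quantum antisymmetrization'' reflector --- whereas the paper proves that $\cat{qPOS}$ is wellpowered and extremally co-wellpowered (Theorem \ref{thm:qPOS is cocomplete}, via Lemmas \ref{lem:monomorphisms in qPOS} and \ref{lem:extremal epi in qPOS}) and invokes Nakagawa's theorem that a complete, wellpowered, extremally co-wellpowered category with coproducts is cocomplete. Your route is the natural classical one, but its central step --- forming the quotient of a quantum set by the quantum equivalence relation $R \wedge R^\dagger$ and verifying the universal property of the reflector --- is exactly what you acknowledge you have not done, and it is nontrivial: the paper remarks that even coequalizers in $\cat{qSet}$ resist explicit description, which is why it retreats to the abstract argument. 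As it stands, your cocompleteness proof has a hole at its center rather than a routine verification. The coproduct construction and the classical comparison (the full subcategory of quantum posets with one-dimensional atoms is equivalent to $\POS$) are correct and agree with the paper.
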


One consequence of this theorem is that the quantum power set of a quantum set is canonically a quantum poset, as we now explain. To begin, quantum function spaces were first introduced by So\l tan \cite{Soltan} following Wang's definition of permutation compact quantum groups \cite{Wang}. Quantum function spaces were then constructed for arbitrary pairs of von Neumann algebras \cite{Kornell}, which lead to the definition of quantum function sets \cite{Kornell2}*{Definition 9.2}. Up to natural isomorphism, the quantum power sets that we work with are quantum functions sets in this sense.

To motivate the definition of the quantum power set, we first recall that a function from a quantum set $\X$ to a quantum set $\Y$ is just a binary relation $F$ such that $F^\dagger \circ F \geq I_\X$ and $F \circ F^\dagger \leq I_\Y$. Such functions are in one-to-one correspondence with unital normal $*$-homomorphisms $\ell^\infty(\Y) \to \ell^\infty(\X)$ \cite{Kornell2}. This class of binary relations defines the category $\cat{qSet}$ of quantum sets and functions, a subcategory of $\cat{qRel}$.

\begin{theorem*}
The inclusion functor $\Inc\: \cat{qSet} \to \cat{qRel}$ has a right adjoint $\qPow\: \cat{qRel} \to \cat{qSet}$. For each quantum set $\X$, we have that $\qPow(\X) \iso `\{0,1\}^{\X^*}$.
\end{theorem*}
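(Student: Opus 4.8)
The plan is to define $\qPow(\Y) := `\{0,1\}^{\Y^*}$ outright and then prove the adjunction by producing a bijection
\[
\Hom_{\cat{qRel}}(\X, \Y) \iso \Hom_{\cat{qSet}}(\X, `\{0,1\}^{\Y^*})
\]
that is natural in the quantum set $\X$ and in the quantum set $\Y$; this yields the right adjoint and the claimed formula at the same time. I would factor this bijection through $\Hom_{\cat{qSet}}(\X \tensor \Y^*, `\{0,1\})$, so that the work splits into two isomorphisms: first the identification $\Hom_{\cat{qRel}}(\X, \Y) \iso \Hom_{\cat{qSet}}(\X \tensor \Y^*, `\{0,1\})$, and then the currying isomorphism $\Hom_{\cat{qSet}}(\X \tensor \Y^*, `\{0,1\}) \iso \Hom_{\cat{qSet}}(\X, `\{0,1\}^{\Y^*})$ that is exactly the universal property of the quantum function set \cite{Kornell2}.

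For the first isomorphism I would combine two observations. A function $\Z \to `\{0,1\}$ corresponds to a unital normal $*$-homomorphism $\CC \oplus \CC \to \ell^\infty(\Z)$, hence to a projection in $\ell^\infty(\Z)$, that is, to a choice of a subspace of each atom of $\Z$; this is precisely the data of a state $I \to \Z$ in $\cat{qRel}$, where $I$ is the monoidal unit. Separately, the compact closed structure of $\cat{qRel}$ identifies a binary relation $R \colon \X \to \Y$ with its name, a state $I \to \X \tensor \Y^*$: concretely, each component $R(X,Y) \subsetof L(X,Y)$ is carried to a subspace of $X \tensor Y^*$ via the canonical identification $L(X,Y) \iso (X \tensor Y^*)^*$ together with the dagger. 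Composing, $R$ is sent to the projection in $\ell^\infty(\X \tensor \Y^*)$ selecting these subspaces, i.e.\ to a function $\X \tensor \Y^* \to `\{0,1\}$, and this assignment is an atom-by-atom bijection.

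Chaining the two isomorphisms and unwinding them produces a counit $\counit_\Y \colon \qPow(\Y) \to \Y$ in $\cat{qRel}$ -- the quantum membership relation, realized as the relation whose name is the evaluation function $`\{0,1\}^{\Y^*} \tensor \Y^* \to `\{0,1\}$. I would then establish the adjunction either by checking that $R \mapsto f$ and $f \mapsto \counit_\Y \circ \Inc(f)$ are mutually inverse and natural, or equivalently by verifying the triangle identities for $\counit$. Functoriality of $\qPow$ and its action on a relation $\Y \to \Y'$ (the quantum analogue of direct image) are then forced by the adjunction and need not be constructed by hand.

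The main obstacle is naturality, and specifically naturality in the variable $\Y$ over $\cat{qRel}$. The object $\Hom_{\cat{qSet}}(\X \tensor \Y^*, `\{0,1\})$ depends on $\Y$ only through $\Y^*$, and a general morphism $\Y \to \Y'$ of $\cat{qRel}$ is merely a relation, inducing only a relation $(\Y')^* \to \Y^*$ rather than a function; thus the covariant dependence on $\Y$ that is visible on the $\cat{qRel}$ side is not manifest on the $\cat{qSet}$ side and must be shown to transport correctly across the bijection. I expect this to be the delicate step, and it is cleanest to discharge it by exhibiting the explicit counit and checking the triangle identities, which also avoids any need for $\cat{qSet}$ itself to be monoidal closed. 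Two secondary points demand care: the conjugate-linear bookkeeping hidden in the identification $L(X,Y) \iso (X \tensor Y^*)^*$, and confirming that the quantum function set $`\{0,1\}^{\Y^*}$ of \cite{Kornell2} exists and enjoys the required currying property for an arbitrary quantum set $\Y$.
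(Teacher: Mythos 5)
Your proposal is correct and follows essentially the same route as the paper: define $\qPow(\X) := `\BB^{\X^*}$, factor the hom-set bijection as $\qRel(\Y,\X) \iso \qRel(\Y\times\X^*,\mathbf 1) \iso \qSet(\Y\times\X^*,`\BB) \iso \qSet(\Y,`\BB^{\X^*})$ using compact closure, the correspondence between relations into $\mathbf 1$ and functions into $`\BB$, and currying, and then package everything as the universal property of the explicit membership counit $\ni_\X$ defined from $\Eval$. The paper likewise sidesteps your naturality worry by verifying only the pointwise universal property of $\ni_\X$ (citing Mac Lane IV.1.2(iv)), from which functoriality of $\qPow$ and naturality are automatic.
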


\noindent The quantum set $`\{0,1\}$ is a quantum set that consists of two one-dimensional atoms, and the atoms of the dual quantum set $\X^*$ are exactly the duals of the atoms of $\X$. The expression $`\{0,1\}^{\X^*}$ denotes the internal hom from $\X^*$ to $`\{0,1\}$ in the category $\cat{qSet}$, which is symmetric monoidal closed. This theorem directly generalizes a familiar characterization of the power set functor in the classical setting \cite{JencovaJenca}; the distinction between a quantum set and its dual is a purely quantum phenomenon. Thus, we are led to the following definition:

\begin{definition*}
Let $\X$ be a quantum set. The quantum power set of $\X$ is $`\{0,1\}^{\X^*}$.
\end{definition*}

\noindent Using \cite{Kornell2}*{Theorem 7.4}, it is straightforward to show that the quantum powerset of $\X$ has a $d$-dimensional atom for every projection operator $p \in M_d(\ell^\infty(\X))$ that is irreducible in the sense that $p$ does not commute with a projection operator $q \in M_d(\CC)$ unless $q = 0$ or $q=1$. Here, we identify projection operators $p_1$ and $p_2$ if $p_1 = u^\dag p_2 u$ for some unitary $u$ in $M_d(\CC)$.

A different quantum power set construction occurs in Takeuti's quantum set theory \cite{Takeuti, Ozawa}. This quantum generalization of sets is not closely related to the one considered here. Roughly, within the framework of noncommutative geometry, Takeuti's quantum set theory refers to Boolean valued models over complete orthomodular lattices rather than complete Boolean algebras \cite{Jech}.

The canonical order on the quantum power set $`\{0,1\}^{\X^*}$ may be obtained by forming the inner hom from $\X^*$ to $`\{0,1\}$ in the category $\cat{qPOS}$ rather than in $\cat{qSet}$. We order the $\X^*$ trivially, and we order $\{0,1\}$ by $0 \sqsubset 1$. We use $\sqsubset$ and $\sqsubseteq$ to order the elements in a poset, reserving $<$ and $\leq$ for the canonical order on the binary relations between two quantum sets.

Classically, every poset may be embedded into its power set by mapping each element to its down set, and we establish a quantum analogue of this fact:

\begin{definition*}
An order embedding of a quantum poset $(\X,R)$ into a quantum poset $(\Y, S)$ is a function $F$ from $\X$ to $\Y$ such that $R = F^\dagger \circ S \circ F$. 
\end{definition*}

\begin{theorem*}
Let $(\X, R)$ be a quantum poset. Then, there exists an order embedding of $(\X,R)$ into $(`\{0,1\}^{\X^*},S)$, where $(`\{0,1\}^{\X^*},S)$ is the internal hom from $(`\{0,1\},`{\below})$ to $(\X^*, I_{\X^*})$.
\end{theorem*}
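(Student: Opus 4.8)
The plan is to construct the order embedding as the quantum down-set map, obtained by transposing $R$ across the adjunction $\Inc \dashv \qPow$ of the preceding theorem. Explicitly, viewing the quantum partial order $R$ as a morphism $\X \to \X$ in $\cat{qRel}$, I let $F\: \X \to \qPow(\X)$ be the function in $\cat{qSet}$ whose transpose it is, so that $\counit_\X \circ F = R^\dagger$, where $\counit_\X\: \qPow(\X) \to \X$ is the counit, i.e. the quantum membership relation. The appearance of $R^\dagger$ rather than $R$ is bookkeeping from the duality $\X \leftrightarrow \X^*$ built into $\qPow$. Since $F$ is by construction a morphism of $\cat{qSet}$, it is automatically a function, and it remains only to prove the two inequalities that together give $R = F^\dagger \circ S \circ F$.

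For $R \le F^\dagger \circ S \circ F$ I would show that $F$ is in fact monotone, i.e. a morphism $(\X,R) \to (\qPow(\X), S)$ in $\cat{qPOS}$. This is where transitivity of $R$ enters: the characteristic morphism from $(\X,R) \tensor (\X^*, I_{\X^*})$ to the two-element quantum poset ordered by $0 \below 1$ that is determined by $R$ is monotone precisely because $R \circ R \le R$, and transposing it through the symmetric monoidal closed structure of $\cat{qPOS}$ (again from the main theorem) exhibits $F$ as a monotone function. Monotonicity $F \circ R \le S \circ F$ then yields $R \le F^\dagger \circ F \circ R \le F^\dagger \circ S \circ F$, using $F^\dagger \circ F \ge I_\X$.

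For the reverse inequality $F^\dagger \circ S \circ F \le R$ I would isolate two facts. First, a purely structural property of the internal-hom order: membership is monotone, $S \circ \counit_\X^\dagger \le \counit_\X^\dagger$ (larger subsets have more members), which should fall out of the defining universal property of $S$ once $\Eval$ is identified with $\counit_\X$ through the compact-closed duality with $\X^*$. Second, reflexivity of $R$: since $I_\X \le R$ gives $I_\X \le R^\dagger = \counit_\X \circ F$, and $F$ is a function, the standard relational identity that $F \le \counit_\X^\dagger$ iff $I_\X \le \counit_\X \circ F$ yields $F \le \counit_\X^\dagger$, the assertion that each point lies in its own down-set. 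Combining, $S \circ F \le S \circ \counit_\X^\dagger \le \counit_\X^\dagger$, and composing on the left with $F^\dagger$ gives $F^\dagger \circ S \circ F \le F^\dagger \circ \counit_\X^\dagger = (\counit_\X \circ F)^\dagger = R$. Together with the first inequality this proves $R = F^\dagger \circ S \circ F$, so $F$ is an order embedding. It is worth noting that only transitivity and reflexivity of $R$ are used, so the statement in fact holds for quantum preorders.

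The main obstacle I anticipate is the identification underlying the structural fact $S \circ \counit_\X^\dagger \le \counit_\X^\dagger$: one must match the order $S$ produced by the $\cat{qPOS}$ internal hom of $(\X^*, I_{\X^*})$ with the two-element quantum poset against the counit $\counit_\X$ of the separate adjunction $\Inc \dashv \qPow$, i.e. verify that the evaluation morphism of the internal hom coincides, after bending the $\X^*$-wire via the dagger-compact structure of $\cat{qRel}$, with quantum membership. Unwinding this compatibility, and pinning down the exact placement of the dual and of $R$ versus $R^\dagger$, is the delicate part; once it is in hand, the remaining steps are the short relational calculations above.
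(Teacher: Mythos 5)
Your construction coincides with the paper's: the monotone function $G$ determined by $\Eval_\below \circ (G \times I_{\X^*}) = \tilde P$, where $`1^\dagger \circ \tilde P = E_\X \circ (R^\dagger \times I_{\X^*})$, is exactly the transpose of $R^\dagger$ under $\Inc \dashv \qPow$, i.e.\ it satisfies $\ni_\X \circ G = R^\dagger$; and the inequality $R \leq G^\dagger \circ S \circ G$ is obtained in the paper just as you propose, by first checking that the characteristic map $\tilde P$ is monotone (Lemma \ref{lem:tilde P is monotone}, via Proposition \ref{prop:upsets are monotone} and $R \circ R = R$) and then transposing through Theorem \ref{thm:qPOS is monoidal closed}. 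Where you genuinely diverge is the reverse inequality. The paper proves that $G$ is injective (a computation that uses the antisymmetry of the pointwise order on functions into $(\X,R)$, hence antisymmetry of $R$), pulls $S$ back to an order $T = G^\dagger \circ S \circ G$, and invokes the maximality clause of Lemma \ref{lem:tilde P is monotone} to get $T \leq R$. You instead compute directly with the membership relation: $F \leq \ni_\X^\dagger$ from $I_\X \leq R^\dagger = \ni_\X \circ F$ and $F \circ F^\dagger \leq I$, together with $S \circ \ni_\X^\dagger \leq \ni_\X^\dagger$, give $F^\dagger \circ S \circ F \leq F^\dagger \circ \ni_\X^\dagger = R$. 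The one step you leave as a conjecture does go through: since $\X^*$ is trivially ordered, the internal hom has underlying quantum set all of $`\BB^{\X^*}$ and $\Eval_\below = \Eval$; monotonicity of $\Eval_\below$ gives $S^\dagger \times I_{\X^*} \leq \Eval^\dagger \circ `({\above}) \circ \Eval$, hence $`1^\dagger \circ \Eval \circ (S^\dagger \times I_{\X^*}) \leq `1^\dagger \circ `({\above}) \circ \Eval = `1^\dagger \circ \Eval$ because $1$ is the top element of $\BB$, and the compact-closed bijection $\qRel(\Y,\X) \cong \qRel(\Y \times \X^*, \mathbf 1)$ reflects order, yielding $\ni_\X \circ S^\dagger \leq \ni_\X$. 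Your route buys two things: it dispenses with the separate injectivity argument (injectivity follows afterwards from Lemma \ref{lem:order embeddings are injective and monotone}), and, as you observe, it never uses antisymmetry of $R$, so the identity $R = F^\dagger \circ S \circ F$ holds for quantum preorders. The cost is the same in both proofs, namely the careful matching of $\Eval_\below$, $\Eval$ and $\ni_\X$, which the paper disposes of in the paragraphs preceding the theorem.
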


\noindent Less abstractly, an order embedding of $(\X,R)$ into $(`\{0,1\}^{\X^*},S)$ is essentially just an isomorphism between $(\X,R)$ and a subset of $(`\{0,1\}^{\X^*},S)$, in the obvious sense. This follows directly from \cite{Kornell2}*{Proposition 10.1} because any order embedding $F$ is injective by Lemma \ref{lem:order embeddings are injective and monotone}.


\section{Definitions and examples} We begin by defining orders and pre-orders on quantum sets, essentially Weaver's quantum pre-orders and quantum partial orders \cite{Weaver1}*{Definition 2.6}. We also define the notion of a monotone function between quantum posets, which reduces to the familiar notion in the classical case.

\begin{definition}\label{def:quantum preorder and order}
Let $\X$ be a quantum set. We call a binary relation $R$ on $\X$ a \emph{pre-order}, and $(\X,R)$ a \emph{quantum pre-ordered set} if it satisfies the following two axioms:
\begin{itemize}
    \item[(1)] $I_\X\leq R$ \; ($R$ is reflexive);
    \item[(2)] $R\circ R\leq R$ \; ($R$ is transitive).
\end{itemize}
If, in addition, $R$ satisfies
\begin{itemize}
    \item[(3)] $R\wedge R^\dag=I_\X$ \; ($R$ is antisymmetric),
\end{itemize}
then we call $R$ a \emph{order}, and $(\X,R)$ a \emph{quantum poset}. 
\end{definition}

\begin{example}\label{ex:trivial order}
Let $\X$ be a quantum set. Then, $I_\X$ is easily seen to be an order on $\X$, the \emph{trivial} order.
\end{example}

\begin{example}\label{ex:qPOS quote}
Let $(A,\sqsubseteq)$ be any poset. Then, $(`A,`{\sqsubseteq})$ is a quantum poset, because ordinary posets are defined in the category $\cat{Rel}$ of sets and binary relations by the same three axioms and the ``inclusion'' functor $`(-)\: \cat{Rel} \to \cat{qRel}$ preserves all the relevant structure \cite{Kornell2}*{Section III}. Furthermore, because this inclusion functor is full and faithful, every order on $`A$ is of this form.
\end{example}

\begin{example}\label{ex:non-classical quantum poset}
Let $A$ be a unital algebra of operators on a nonzero finite-dimensional Hilbert space $H$ that is anti-symmetric in the sense that $A \cap A^\dag = \CC 1$  \cite{Waclaw}\cite{Waclaw2}\cite{Weaver2}. Then, the equation $R(H,H) = A$ defines an order on $\H$, the quantum set whose only atom is $H$. Furthermore, every order on $\H$ is of this form, as a simple consequence of the definition of $\cat{qRel}$ and of all the relevant structure on this category.
\end{example}

\begin{example}
Let $\X$ be a quantum set with two atoms, $X_1$ and $X_2$. We may define an order $R$ on $\X$ as follows:
$$
\left(
\begin{matrix}
R(X_1, X_1)
&
R(X_2, X_1)
\\
R(X_1, X_2)
&
R(X_2, X_2)
\end{matrix}
\right)
=
\left(
\begin{matrix}
\CC1_{X_1}
&
0
\\
L(X_1, X_2)
&
\CC 1_{X_2}
\end{matrix}
\right).
$$
Intuitively, the structure of the quantum poset $(\X, R)$ may be described as follows: the atom $X_1$ represents a subset of $\X$ of cardinality $\dim(X_1)^2$ that is trivially ordered, the atom $X_2$ represents a subset of $\X$ of cardinality $\dim(X_2)^2$ that is trivially ordered, and every element of the first subset is below every element of the second.
\end{example}

We record the following basic facts about orders on quantum sets in a single lemma, whose proof we omit because it is straightforward.
\begin{lemma}\label{lem:basic facts on quantum orders}
Let $R$ be a pre-order on a quantum set $\X$. Then,
\begin{itemize}
\item[(1)] $R^\dag$ is a pre-order, and it is an order if $R$ is an order;
    \item[(2)] $R\circ R=R$;
\end{itemize}
\end{lemma}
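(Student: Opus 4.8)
The plan is to derive both statements purely formally from the structure of $\cat{qRel}$ as a dagger category enriched over the modular orthomodular lattices, using three facts recalled earlier: the dagger preserves the order $\leq$, is an involution, satisfies $(R \circ S)^\dagger = S^\dagger \circ R^\dagger$, and fixes the identity, so that $I_\X^\dagger = I_\X$; composition $\circ$ is monotone in each argument with respect to $\leq$ (this follows from the enrichment, since the hom-functors preserve suprema); and $I_\X$ is a two-sided identity for composition. No computation with atoms is needed.

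For part (1), I would obtain reflexivity of $R^\dagger$ by applying the order-preserving dagger to $I_\X \leq R$ and using $I_\X^\dagger = I_\X$, which yields $I_\X \leq R^\dagger$. Transitivity follows by rewriting $R^\dagger \circ R^\dagger = (R \circ R)^\dagger$ via anti-multiplicativity and applying the dagger to $R \circ R \leq R$, giving $R^\dagger \circ R^\dagger \leq R^\dagger$. Thus $R^\dagger$ is a pre-order. If $R$ is moreover an order, then antisymmetry of $R^\dagger$ is immediate: since the dagger is an involution, $(R^\dagger)^\dagger = R$, and since the meet is commutative, $R^\dagger \wedge (R^\dagger)^\dagger = R \wedge R^\dagger = I_\X$.

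For part (2), transitivity gives $R \circ R \leq R$ directly. For the reverse inequality, reflexivity gives $I_\X \leq R$, and monotonicity of composition in its right argument yields $R = R \circ I_\X \leq R \circ R$. Combining the two inequalities gives $R \circ R = R$.

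I expect no genuine obstacle here; the only point requiring care is to ensure that the three structural properties invoked—order-preservation and anti-multiplicativity of the dagger, and monotonicity of composition—are precisely those established for $\cat{qRel}$ in the cited construction, rather than assumed informally. This is exactly why the statement can fairly be described as straightforward and its proof omitted.
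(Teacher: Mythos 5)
Your proof is correct, and since the paper deliberately omits the proof as straightforward, your argument is exactly the intended one: part (1) follows from applying the order-preserving, anti-multiplicative, involutive dagger (Lemma \ref{lem:action of dagger}) to the pre-order axioms, and part (2) combines transitivity with $R = R \circ I_\X \leq R \circ R$ via monotonicity of composition. All three structural facts you invoke are indeed available in $\cat{qRel}$ (order-preservation of the dagger from Lemma \ref{lem:action of dagger}, monotonicity of composition from Lemma \ref{lem:union}), so there is nothing to add.
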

The order $R^\dag$ is called the \emph{opposite} order, since for any ordinary poset $(S,\sqsubseteq)$, we have $`(\sqsubseteq)^\dag=`(\sqsupseteq)$ on $`S$.

\begin{lemma}\label{lem:orders generated antisymmetric subalgebras}
Let $R$ be a pre-order on $\X$ and let $X\atomof\X$. Then $R(X,X)$ is a unital subalgebra of $L(X):=L(X,X)$. Moreover, if $R$ is an order, then $R(X,X)$ is an antisymmetric subalgebra, i.e., $R(X,X)\wedge R(X,X)^\perp=\CC 1_X$.
\end{lemma}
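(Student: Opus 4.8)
The plan is to reduce each assertion to its $(X,X)$-component and then read it off directly from the pre-order axioms, using only the componentwise descriptions of the structure on $\cat{qRel}$. Recall that $\leq$ and $\wedge$ are computed atomwise, that $I_\X(X,X) = \CC 1_X$, that $R^\dagger(X,X) = \{r^\dagger : r \in R(X,X)\}$ (the adjoint subspace, which in the statement is written $R(X,X)^\perp$), and that composition is given atomwise by
\[
(R \circ R)(X,Z) = \sum_{Y \atomof \X} \operatorname{span}\{\, s r : r \in R(X,Y),\ s \in R(Y,Z)\,\}.
\]

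For the first claim, I would argue as follows. That $R(X,X)$ is a linear subspace of $L(X)$ is immediate from the definition of a binary relation. Unitality follows from reflexivity: restricting $I_\X \leq R$ to the $(X,X)$-component yields $\CC 1_X = I_\X(X,X) \subseteq R(X,X)$, so $1_X \in R(X,X)$. For closure under multiplication I would restrict transitivity $R \circ R \leq R$ to the $(X,X)$-component, giving $(R \circ R)(X,X) \subseteq R(X,X)$, and then isolate inside the defining sum the single summand indexed by the intermediate atom $Y = X$. This summand is exactly $\operatorname{span}\{\, s r : r, s \in R(X,X)\,\}$, so every product of two elements of $R(X,X)$ lies in $(R \circ R)(X,X) \subseteq R(X,X)$. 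Hence $R(X,X)$ is closed under multiplication, and together with unitality this makes it a unital subalgebra of $L(X)$.

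For the second claim, assume in addition that $R$ is an order, so that antisymmetry $R \wedge R^\dagger = I_\X$ holds. I would simply take the $(X,X)$-component of this identity. Since meet is computed atomwise, the left-hand side becomes $R(X,X) \cap R^\dagger(X,X) = R(X,X) \cap R(X,X)^\perp$, while the right-hand side is $I_\X(X,X) = \CC 1_X$. This is precisely the assertion that $R(X,X)$ is an antisymmetric subalgebra in the sense of Example \ref{ex:non-classical quantum poset}.

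The only point requiring care is the bookkeeping for composition: one must confirm that the $(X,X)$-component of $R \circ R$ is a sum over all intermediate atoms and that the diagonal term $Y = X$ supplies exactly the span of products of pairs drawn from $R(X,X)$. Once this is in hand, both statements are direct componentwise specializations of the pre-order axioms, which is why they can be regarded as routine.
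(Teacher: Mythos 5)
Your proof is correct and follows essentially the same route as the paper: reflexivity gives unitality componentwise, transitivity restricted to the $(X,X)$-component (with the diagonal summand $X'=X$ of $(R\circ R)(X,X)$) gives closure under multiplication, and the $(X,X)$-component of $R\wedge R^\dagger=I_\X$ gives antisymmetry. You also correctly read the $R(X,X)^\perp$ in the statement as the adjoint subspace $R(X,X)^\dagger$, which is how the paper's own proof concludes.
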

\begin{proof}
Since $I_\X\leq R$, we have $\CC1_X=I_\X(X,X)\leq R(X,X)$, implying that $R(X,X)$ contains the identity element of $L(X)$. We also compute that \[ R(X,X)\cdot R(X,X)\leq\bigvee_{X'\atomof\X}R(X,X')\cdot R(X',X)=(R\circ R)(X,X)\leq R(X,X).\]
Therefore, $R(X,X)$ is an algebra. Finally, if $R$ is an order, then
\[\CC 1_X=I_\X(X,X)=(R\wedge R^\dag)(X,X)=R(X,X)\wedge R^\dag(X,X)=R(X,X)\wedge R(X,X)^\dag, \]
so $R(X,X)$ is antisymmetric.
\end{proof}

\begin{lemma}\label{lem:intersection of preorders is a preorder}
Let $\X$ be a quantum set, and let $\{R_{\alpha}\}_{\alpha\in A}$ be a collection of pre-orders on $\X$. Then, $R=\bigwedge_{\alpha\in A}R_\alpha$ is also a pre-order.
\end{lemma}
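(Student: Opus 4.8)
The plan is to verify the two pre-order axioms for $R = \bigwedge_{\alpha \in A} R_\alpha$ directly, using only that the meet is the greatest lower bound in the lattice of binary relations on $\X$ and that composition is monotone with respect to this lattice order. Both ingredients are available: the hom-sets of $\cat{qRel}$ carry the structure of modular orthomodular lattices recalled in the introduction, and composition is monotone in each variable as part of that enrichment.

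First I would check reflexivity. By hypothesis $I_\X \leq R_\alpha$ for every $\alpha \in A$, so $I_\X$ is a lower bound for the family $\{R_\alpha\}_{\alpha \in A}$. Since $R = \bigwedge_{\alpha \in A} R_\alpha$ is by definition the greatest such lower bound, we obtain $I_\X \leq R$ at once.

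Next I would check transitivity. For each fixed $\beta \in A$ we have $R \leq R_\beta$, because $R$ is a lower bound of the whole family. Monotonicity of composition then gives $R \circ R \leq R_\beta \circ R_\beta$, and transitivity of the pre-order $R_\beta$ gives $R_\beta \circ R_\beta \leq R_\beta$; chaining these yields $R \circ R \leq R_\beta$. As $\beta \in A$ was arbitrary, $R \circ R$ is a lower bound of $\{R_\beta\}_{\beta \in A}$, whence $R \circ R \leq \bigwedge_{\beta \in A} R_\beta = R$. This establishes axioms (1) and (2), so $R$ is a pre-order.

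There is no genuine obstacle here. The single fact invoked beyond the lattice structure is that $\circ$ preserves the order $\leq$ in each variable, and the only point worth spelling out explicitly, if desired, is this monotonicity of composition; everything else is a formal manipulation of greatest lower bounds. I note that antisymmetry is \emph{not} preserved in general by meets, which is why the statement is phrased for pre-orders rather than orders.
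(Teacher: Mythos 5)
Your proof is correct and follows essentially the same route as the paper: reflexivity via the greatest-lower-bound property, and transitivity by bounding $R \circ R$ below each $R_\beta$. The paper phrases the transitivity step through the subdistributivity of composition over meets (its Lemma on intersections, producing the double-indexed meet $\bigwedge_{\alpha,\beta} R_\alpha \circ R_\beta$ before restricting to the diagonal), whereas you fix a single $\beta$ and use monotonicity of composition directly --- a trivially equivalent rearrangement.
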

\begin{proof}
We have $I_\X\leq R_\alpha$ for each $\alpha\in A$, hence also $I_\X\leq\bigwedge_{\alpha\in A}R_\alpha=R$. 
Furthermore, using Lemma \ref{lem:intersection}, we find that
\[R\circ R=\left(\bigwedge_{\alpha\in A}R_\alpha\right)\circ\left(\bigwedge_{\beta\in A}R_\beta\right)\leq \bigwedge_{\alpha,\beta\in A}R_\alpha\circ R_\beta\leq \bigwedge_{\alpha\in A}R_\alpha\circ R_\alpha\leq \bigwedge_{\alpha\in A}R_\alpha=R. \qedhere\] 
\end{proof}

\begin{definition}\label{def:monotone map}
Let $(\X,R)$ and $(\Y,S)$ be quantum pre-ordered sets. Then, a \emph{monotone} function $F\:(\X,R)\to(\Y,S)$ is a function $F\:\X\to\Y$ that satisfies any of the following equivalent conditions: 
\begin{itemize}
    \item[(1)] $F\circ R\leq S\circ F$;
    \item[(2)] $F\circ R\circ F^\dag\leq S$;
    \item[(3)] $R\leq F^\dag\circ S\circ F$.
\end{itemize}
\end{definition}
The equivalences between these conditions follow directly from the definition of a function between quantum sets.

\begin{lemma}\label{lem:composition of monotone functions is monotone}
Let $(\X,R)$, $(\Y,S)$ and $(\Z,T)$ be quantum posets, and let $F:\X\to\Y$ and $G:\Y\to\Z$ be monotone. Then $G\circ F:\X\to \Z$ is monotone.
\end{lemma}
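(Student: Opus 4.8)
The plan is to verify the two defining requirements of a monotone function for the composite $G \circ F$: that it is a function from $\X$ to $\Z$, and that it satisfies one of the equivalent monotonicity conditions of Definition \ref{def:monotone map}.

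First I would observe that $G \circ F$ is a function. Since $\cat{qSet}$ is a category and functions between quantum sets are precisely the binary relations $F$ with $F^\dag \circ F \geq I$ and $F \circ F^\dag \leq I$, the composite of two such relations is again of this form; explicitly, $(G \circ F)^\dag \circ (G \circ F) = F^\dag \circ (G^\dag \circ G) \circ F \geq F^\dag \circ I_\Y \circ F = F^\dag \circ F \geq I_\X$, and dually $(G \circ F) \circ (G \circ F)^\dag \leq I_\Z$. Here I use that $\circ$ is monotone in each argument, so that $G^\dag \circ G \geq I_\Y$ may be substituted inside the larger composite.

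The heart of the argument is then a single chain of (in)equalities using condition (1) of Definition \ref{def:monotone map}. Monotonicity of $F$ gives $F \circ R \leq S \circ F$, and monotonicity of $G$ gives $G \circ S \leq T \circ G$. Combining these:
\[ (G \circ F) \circ R = G \circ (F \circ R) \leq G \circ (S \circ F) = (G \circ S) \circ F \leq (T \circ G) \circ F = T \circ (G \circ F). \]
The equalities are instances of the associativity of $\circ$ in $\cat{qRel}$, while the two inequalities invoke the monotonicity hypotheses on $F$ and $G$ together with the fact that $\circ$ preserves $\leq$ in each variable. By condition (1) of Definition \ref{def:monotone map}, this establishes that $G \circ F$ is monotone.

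I do not expect a genuine obstacle here. The only point requiring care is that composition in $\cat{qRel}$ is monotone with respect to the order $\leq$ on each hom-set, which is exactly the content of the enrichment of $\cat{qRel}$ over the category of modular orthomodular lattices and supremum-preserving monotone functions recorded in the introduction. Granting this, both the functionality check and the monotonicity chain are immediate, so the proof is essentially a formal computation.
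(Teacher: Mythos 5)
Your proposal is correct and follows essentially the same route as the paper: the key chain $G\circ F\circ R\leq G\circ S\circ F\leq T\circ G\circ F$ is exactly the paper's argument, with your additional (harmless) verification that $G\circ F$ is a function, which the paper leaves implicit since $\qSet$ is a category.
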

\begin{proof}
Since $F$ is monotone, we have $F\circ R\leq S\circ F$. Monotonicity of $G$ means that $G\circ S\leq T\circ G$. Hence $G\circ F\circ R\leq G\circ S\circ F\leq T\circ G\circ F,$
showing that indeed $G\circ F$ is monotone.
\end{proof}
Since the composition of two monotone functions is monotone, we obtain a category of quantum posets and monotone functions, which we notate $\qPOS$.

\begin{example}\label{ex:function with trivially ordered domain is monotone}
Let $\X$ be a trivially ordered quantum set (cf. Example \ref{ex:trivial order}) and let $(\Y,S)$ be a quantum poset. Then any function $F:\X\to\Y$ is monotone. Indeed, we have
\[ F\circ I_\X=I_\Y\circ F\leq S\circ F.\]
\end{example}

\begin{lemma}\label{lem:qPOS quote}
Let $(A,\sqsubseteq_A)$ and $(B,\sqsubseteq_B)$ be posets, and let $f:A\to B$ be a function. Then $f$ is monotone if and only if $`f$ is monotone.
\end{lemma}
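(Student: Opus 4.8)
The plan is to deduce the lemma entirely from the structural properties of the inclusion functor $`(-)\:\cat{Rel}\to\cat{qRel}$ recalled in Example \ref{ex:qPOS quote}: it is full, faithful, and preserves all the relevant structure on hom-sets, in particular composition $\circ$, the dagger, identity relations, and the lattice operations (and so the binary meet $\wedge$ and the order $\leq$). Under this functor the partial order $\sqsubseteq_A$ is sent to the binary relation $`{\sqsubseteq_A}$, and the graph of $f$ is sent to a function $`f\:`A\to`B$, so that Definition \ref{def:monotone map} genuinely applies to $`f$ as a morphism of quantum pre-ordered sets.

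First I would record the relational reformulation of classical monotonicity: for ordinary posets, $f$ is monotone in the usual sense (that is, $a\sqsubseteq_A a'$ implies $f(a)\sqsubseteq_B f(a')$) if and only if $f\circ{\sqsubseteq_A}\leq{\sqsubseteq_B}\circ f$ as binary relations in $\cat{Rel}$. This is a routine unwinding of the definition of relational composition, and it is precisely the $\cat{Rel}$-analogue of condition (1) in Definition \ref{def:monotone map}; indeed, as noted in Example \ref{ex:qPOS quote}, posets and monotone maps are defined in $\cat{Rel}$ by the same axioms used here in $\cat{qRel}$.

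The core of the argument is then the chain of equivalences
\begin{align*}
`f\text{ monotone}
&\iff `f\circ`{\sqsubseteq_A}\leq`{\sqsubseteq_B}\circ`f \\
&\iff `(f\circ{\sqsubseteq_A})\leq`({\sqsubseteq_B}\circ f) \\
&\iff f\circ{\sqsubseteq_A}\leq{\sqsubseteq_B}\circ f \\
&\iff f\text{ monotone}.
\end{align*}
The first equivalence is Definition \ref{def:monotone map}(1) applied to $`f$; the second uses that $`(-)$ preserves composition; the third uses that $`(-)$ reflects the order on hom-sets; and the fourth is the reformulation of the previous paragraph.

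The only step requiring care is the third equivalence, namely that $`(-)$ \emph{reflects} the order on hom-sets and not merely preserves it; the hard part, such as it is, is isolating which preservation property yields this. It suffices that $`(-)$ preserves binary meets and is faithful: from $`R\leq`S$ we obtain $`R\wedge`S=`R$, hence $`(R\wedge S)=`R$ by meet-preservation, hence $R\wedge S=R$ by faithfulness, that is $R\leq S$; the forward implication follows from meet-preservation alone. All remaining steps are bookkeeping with the functor's stated preservation properties, so no genuinely new computation is needed beyond what is cited in Example \ref{ex:qPOS quote}.
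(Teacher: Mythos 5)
Your proposal is correct and follows essentially the same route as the paper: both reduce the $\cat{qRel}$ condition $`f\circ`({\sqsubseteq_A})\leq`({\sqsubseteq_B})\circ`f$ to the $\cat{Rel}$ inequality $f\circ({\sqsubseteq_A})\leq({\sqsubseteq_B})\circ f$ via the structure-preserving inclusion functor, and then identify that inequality with classical monotonicity. The only differences are ones of emphasis — you make explicit the order-reflection step that the paper leaves implicit, while the paper writes out the element-chasing equivalence with classical monotonicity that you declare routine — and both of those steps check out.
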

\begin{proof}
The function $`f:`A\to`B$ is monotone if and only if $`f\circ `(\sqsubseteq_S) \leq `(\sqsubseteq_B) \circ `f$, or equivalently $`(f\circ (\sqsubseteq_A)) \leq `((\sqsubseteq_B)\circ f)$. The functor $`(-):\cat{Rel} \to\qRel$ preserves dagger monoidal structure \cite{Kornell2}*{Section 3}; hence $`f$ is monotone if and only if $f\circ(\sqsubseteq_A)\leq(\sqsubseteq_B)\circ f$. It remains only to show that this inequality is equivalent to the monotonicity of $f$.

Assume that $f$ is monotone, and let $(a,b)$ be a pair in the binary relation $f \circ (\below_A)$. It follows that $b = f(a')$ for some $a' \above_A a$. Since $f$ is monotone, we find that $b \above_B f(a)$. In other words, $(a,b)$ is in the binary relation $(\below_B) \circ f$. Therefore, $f \circ (\below_A) \leq (\below_B) \circ f$.

Now, assume that $f$ satisfies the inequality $f \circ (\below_A) \leq (\below_B) \circ f$, and let $a_1 \below_A a_2$. The pair $(a_1, f(a_2))$ is in $f \circ (\below_A)$ simply by definition of composition. By assumption, it is also in $(\below_B) \circ f$. Thus, $f(a_1) \below_A f(a_2)$. Therefore, $f$ is monotone.
\end{proof}

\begin{proposition}
The functor $`(-):\POS\to\qPOS$, given by $(A,\sqsubseteq)\mapsto (`A,`{\sqsubseteq})$ on objects and by $f\mapsto `f$ on morphisms is fully faithful.
\end{proposition}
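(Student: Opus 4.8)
The plan is to reduce the statement to two facts already in hand: that the inclusion functor $`(-)\colon\cat{Rel}\to\qRel$ is full and faithful (Example \ref{ex:qPOS quote}), and that $f$ is monotone if and only if $`f$ is monotone (Lemma \ref{lem:qPOS quote}). First I would dispatch well-definedness and functoriality. On objects the assignment lands in $\qPOS$ by Example \ref{ex:qPOS quote}. On morphisms, a classical function $f$ is sent to a quantum function $`f$, since $`(-)$ carries the relation-level function axioms $f^\dagger\circ f\geq I_A$ and $f\circ f^\dagger\leq I_B$ to the corresponding quantum-function axioms; and $`f$ is monotone by Lemma \ref{lem:qPOS quote}, so $`f$ is genuinely a morphism of $\qPOS$. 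Preservation of identities and composites is inherited from $`(-)\colon\cat{Rel}\to\qRel$. Faithfulness is then immediate: if $`f = `g$ in $\qRel$, then $f = g$ because $`(-)\colon\cat{Rel}\to\qRel$ is faithful.

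The substance of the proposition is fullness. I would fix posets $(A,\sqsubseteq_A)$ and $(B,\sqsubseteq_B)$ and take an arbitrary morphism $G\colon(`A,`{\sqsubseteq_A})\to(`B,`{\sqsubseteq_B})$ of $\qPOS$, that is, a monotone function of quantum posets, and produce a monotone $f\colon A\to B$ with $`f = G$. Since $G$ is in particular a binary relation from $`A$ to $`B$, fullness of $`(-)\colon\cat{Rel}\to\qRel$ supplies a classical relation $R\subseteq A\times B$ with $G = `R$. It then suffices to show that $R$ is the graph of a function $f$, for then Lemma \ref{lem:qPOS quote} applied to $`f = G$ shows that $f$ is monotone, and $f$ is the desired preimage of $G$.

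To see that $R$ is a function I would transport the two quantum-function axioms for $G = `R$ back across $`(-)$. These read $`R^\dagger\circ`R\geq I_{`A}$ and $`R\circ`R^\dagger\leq I_{`B}$. Because $`(-)$ preserves the dagger, composition and identities, and because it is full and faithful and hence reflects the order $\leq$ on hom-sets, these inequalities are equivalent to $R^\dagger\circ R\geq I_A$ and $R\circ R^\dagger\leq I_B$ in $\cat{Rel}$, which is exactly the assertion that $R$ is a function $A\to B$. I expect this transport step to be the only real obstacle, and it is routine: every structural ingredient appearing in the function axioms is both preserved and reflected by the full and faithful dagger functor $`(-)\colon\cat{Rel}\to\qRel$. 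With $R = f$ a monotone function satisfying $`f = G$, the map $f\mapsto`f$ on hom-sets is surjective, and together with faithfulness this proves that $`(-)\colon\POS\to\qPOS$ is fully faithful.
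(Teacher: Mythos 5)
Your proposal is correct and follows essentially the same route as the paper: both arguments reduce everything to the fact that $`(-)\colon\cat{Rel}\to\qRel$ is fully faithful and preserves (hence, being full and faithful, also reflects) the dagger monoidal structure in terms of which functions and monotone functions are defined, together with Lemma \ref{lem:qPOS quote}. You simply spell out the fullness step --- pulling a morphism $G$ back to a classical relation $R$ and transporting the function axioms across $`(-)$ --- which the paper leaves implicit.
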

\begin{proof}
The functor $`(-):\cat{Rel} \to\qRel$ is fully faithful, and it preserves dagger monoidal structure \cite{Kornell2}*{Section 3}. Because functions and, moreover, monotone functions are defined in terms of this dagger monoidal structure (Lemma \ref{lem:qPOS quote}), it follows that this functor restricts to a functor $\Set \to \qSet$ and, moreover, to a functor $\POS \to \qPOS$.
\end{proof}

\section{Subposets} The subsets of a quantum set $\X$ correspond to injections into $\X$ \cite{Kornell2}*{Proposition 10.1}. We show that the subsets of a quantum poset $\X$ similarly correspond to order embeddings.

\begin{lemma}\label{lem:pullback order}
Let $\X$ and $\Y$ be quantum sets, and let $S$ be a pre-order on $\Y$. Let $F:\X\to\Y$ be a function. Then $R=F^\dag\circ S\circ F$ is a pre-order on $\X$, and $F:(\X,R)\to (\Y,S)$ is monotone. Moreover, if $(\Y,S)$ is a quantum poset and $F$ is injective, then $(\X,R)$ is a quantum poset as well.
\end{lemma}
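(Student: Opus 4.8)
The plan is to verify the three assertions in turn, disposing of the pre-order and monotonicity claims by direct computation and reserving the real work for antisymmetry. First I would check that $R=F^\dag\circ S\circ F$ is reflexive by combining totality of $F$, namely $I_\X\leq F^\dag\circ F$, with reflexivity of $S$, namely $I_\Y\leq S$, and monotonicity of composition:
\[ R=F^\dag\circ S\circ F\geq F^\dag\circ I_\Y\circ F=F^\dag\circ F\geq I_\X. \]
For transitivity I would use single-valuedness $F\circ F^\dag\leq I_\Y$ together with transitivity $S\circ S\leq S$:
\[ R\circ R=F^\dag\circ S\circ(F\circ F^\dag)\circ S\circ F\leq F^\dag\circ(S\circ S)\circ F\leq F^\dag\circ S\circ F=R. \]
Monotonicity of $F\:(\X,R)\to(\Y,S)$ is then immediate, since $R=F^\dag\circ S\circ F$ gives $R\leq F^\dag\circ S\circ F$, which is condition (3) of Definition \ref{def:monotone map}.

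The substance is the final claim: assuming antisymmetry $S\wedge S^\dag=I_\Y$ and injectivity $F^\dag\circ F=I_\X$, I must show $R\wedge R^\dag=I_\X$. Noting that $R^\dag=F^\dag\circ S^\dag\circ F$, the inequality $I_\X\leq R\wedge R^\dag$ is easy: the reflexivity already established gives $R\geq I_\X$, and applying the dagger (which is monotone and fixes $I_\X$) yields $R^\dag\geq I_\X$, so the meet dominates $I_\X$. For the reverse inequality I would set $P:=R\wedge R^\dag$ and ``conjugate $P$ up into $\Y$''. Since $P\leq R=F^\dag\circ S\circ F$ and $F\circ F^\dag\leq I_\Y$, monotonicity gives $F\circ P\circ F^\dag\leq(F\circ F^\dag)\circ S\circ(F\circ F^\dag)\leq S$, and symmetrically, from $P\leq R^\dag$, I get $F\circ P\circ F^\dag\leq S^\dag$. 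Taking the meet and invoking antisymmetry of $S$ produces $F\circ P\circ F^\dag\leq S\wedge S^\dag=I_\Y$. I would then ``conjugate back down'' using injectivity: substituting $I_\X=F^\dag\circ F$ and regrouping,
\[ P=(F^\dag\circ F)\circ P\circ(F^\dag\circ F)=F^\dag\circ(F\circ P\circ F^\dag)\circ F\leq F^\dag\circ I_\Y\circ F=F^\dag\circ F=I_\X. \]

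The hard part is exactly this last claim, and it is worth tracking where each hypothesis is used. Single-valuedness $F\circ F^\dag\leq I_\Y$ is what keeps $F\circ P\circ F^\dag$ beneath both $S$ and $S^\dag$; antisymmetry of $S$ collapses their meet to $I_\Y$; and injectivity --- the \emph{equality} $F^\dag\circ F=I_\X$, rather than the mere inequality $I_\X\leq F^\dag\circ F$ available for an arbitrary function --- is precisely what licenses the substitution $I_\X=F^\dag\circ F$ in the final display. Without injectivity one would only conclude $P\leq F^\dag\circ F$, which need not equal $I_\X$, so the ``conjugate up, conjugate down'' strategy genuinely requires it. Conceptually this argument is the relational avatar of the fact that conjugation $A\mapsto F^\dag\circ A\circ F$ by a function preserves meets; one could alternatively route the proof through the Galois adjunctions $F\circ(-)\dashv F^\dag\circ(-)$ and $(-)\circ F^\dag\dashv(-)\circ F$ valid for any map in $\qRel$, but the direct sandwich above avoids that machinery.
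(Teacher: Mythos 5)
Your proof is correct. The reflexivity, transitivity, and monotonicity parts coincide with the paper's argument (the paper verifies monotonicity via condition (1) of Definition \ref{def:monotone map} by computing $F\circ R\leq S\circ F$, whereas you observe that condition (3) holds tautologically; these are equivalent by definition). Where you genuinely diverge is in the antisymmetry step. The paper gets $R\wedge R^\dag=F^\dag\circ(S\wedge S^\dag)\circ F=F^\dag\circ F=I_\X$ as a one-line chain of \emph{equalities} by invoking Proposition \ref{prop:intersection}, i.e.\ the fact that pre- and post-conjugation by a function preserves meets. You instead prove the two inequalities separately: $I_\X\leq R\wedge R^\dag$ from the already-established reflexivity, and $R\wedge R^\dag\leq I_\X$ by the sandwich $F\circ(R\wedge R^\dag)\circ F^\dag\leq S\wedge S^\dag=I_\Y$ followed by conjugating back down with $F^\dag\circ F=I_\X$. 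Your route is more elementary in that it uses only monotonicity of composition (Lemma \ref{lem:union}) and the dagger's monotonicity (Lemma \ref{lem:action of dagger}), at the cost of being slightly longer and of needing reflexivity as a separate input for the lower bound; the paper's route buys a shorter computation but leans on the less trivial meet-preservation property of functions. Your accounting of exactly where single-valuedness, antisymmetry of $S$, and the injectivity \emph{equality} enter is accurate and matches the role these hypotheses play in the paper's version.
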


\begin{proof}
We have
\[ I_\X\leq F^\dag\circ F=F^\dag\circ I_\Y\circ F\leq F^\dag\circ S\circ F=R,\]
so $R$ satisfies the first axiom of a pre-order on a quantum set. Furthermore, we have 
\[R \circ R=F^\dag\circ S\circ F\circ F^\dag\circ S\circ F\leq F^\dag \circ S\circ S\circ F\leq F^\dag\circ S\circ F=R,\]
so $R$ also satisfies the second axiom and hence is a pre-order.

Next we show that $F$ is monotone: \[ F\circ R=F\circ F^\dag\circ S\circ F\leq I_\Y\circ S\circ F=S\circ F.\]

Now, assume that $F$ is injective. Furthermore, assume that $S$ satisfies the antisymmetry axiom, i.e., $S\wedge S^\dag=I_\Y$.
We check that $R$ satisfies the antisymmetry axiom too:
\[R\wedge R^\perp = (F^\dag\circ S\circ F)\wedge (F^\dag\circ S\circ F)^\dag=(F^\dag\circ S\circ F)\wedge (F^\dag\circ S^\dag\circ F)=F^\dag\circ(S\wedge S^\dag)\circ F=F^\dag\circ F=I_\X,\]   
where the second equality follows from Proposition \ref{prop:intersection}, the penultimate equality follows by the antisymmetry axiom for $S$, and the last equality follows from the injectivity of $F$.
\end{proof}

We will shortly apply Lemma \ref{lem:pullback order} to show that any pre-order on a quantum set $\Y$ restricts to pre-order on each subset $\X \subsetof \Y$. This relationship $\X \subsetof \Y$ is just that each atom of $\X$ is also an atom of $\Y$, and it comes with an inclusion function $J_\X \: \X \to \Y$. For $X \atomof \X$ and $Y \atomof \Y$, the operator subspace $J_\X(X,Y)$ is the span of the identity operator if indeed $X = Y$, and otherwise, it is zero.

\begin{definition}\label{def:induced order}
Let $(\Y,S)$ be a quantum poset. Then a \emph{subposet} of $\Y$ consists of a subset $\X\subseteq\Y$ equipped with order $R=J_\X^\dag\circ S\circ J_\X$, to which we refer as the \emph{induced order} on $\X$.
\end{definition}

The quantum generalization of the concept of a subposet leads to the quantum generalization of the notion of an order embedding:

\begin{definition}
Let $(\X,R)$ and $(\Y,S)$ be quantum pre-ordered sets. Then we call a function $J:\X\to \Y$ an \emph{order embedding} if $R=J^\dag\circ S\circ J$.
\end{definition}

We say that a function $F\: \X \to \Y$ is an \emph{injection} if $F^\dagger \circ F = I_\X$, a \emph{surjection} if $F \circ F^\dagger = I_\Y$, and a \emph{bijection} if both equalities hold \cite{Kornell2}*{Section 4}. When $\X \subsetof \Y$, the inclusion function $J_\X$ is an injection in this sense. Just as order embeddings between posets in the classical sense are monotone and injective, this happens to be true for the quantum case as well:

\begin{lemma}\label{lem:order embeddings are injective and monotone}
Let $(\X,R)$ and $(\Y,S)$ be quantum posets, and let $F:\X\to\Y$ be an order embedding. Then $F$ is both injective and monotone.
\end{lemma}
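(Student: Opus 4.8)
The plan is to treat monotonicity and injectivity separately, deriving each as an essentially immediate consequence of the defining equation $R = F^\dag \circ S \circ F$ of an order embedding, together with the antisymmetry axioms for $R$ and $S$.

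Monotonicity requires no real work. Since $F$ is an order embedding, $R = F^\dag \circ S \circ F$, and in particular $R \leq F^\dag \circ S \circ F$. This is exactly condition (3) of Definition \ref{def:monotone map}, so $F$ is monotone. For injectivity, recall that $F$ being a function already gives $F^\dag \circ F \geq I_\X$, so it suffices to prove $F^\dag \circ F = I_\X$. I would compute, using the antisymmetry of $R$ together with Proposition \ref{prop:intersection} to distribute the meet across composition by $F$ and $F^\dag$,
\[
I_\X = R \wedge R^\dag = (F^\dag \circ S \circ F) \wedge (F^\dag \circ S^\dag \circ F) = F^\dag \circ (S \wedge S^\dag) \circ F = F^\dag \circ I_\Y \circ F = F^\dag \circ F,
\]
where the first equality is the antisymmetry axiom for $R$, the third is Proposition \ref{prop:intersection}, and the fourth is the antisymmetry axiom for $S$. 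This yields $F^\dag \circ F = I_\X$, so $F$ is injective.

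The only point demanding care is the logical direction of the injectivity argument. Precisely this chain of equalities appears in the proof of Lemma \ref{lem:pullback order}, but there the injectivity of $F$ was assumed in order to conclude $F^\dag \circ F = I_\X$; here the inference runs in reverse, using the hypothesis that $R$ is an order (hence antisymmetric) to deduce $F^\dag \circ F = I_\X$ and thereby obtain injectivity. I would therefore verify that the chain of equalities up through $F^\dag \circ F$ holds without presupposing injectivity, which it does: the passage of the meet through the compositions via Proposition \ref{prop:intersection} uses only that $F$ is a function, and the reduction $S \wedge S^\dag = I_\Y$ uses only that $S$ is an order. Hence the argument is legitimate and does not circularly invoke the conclusion.
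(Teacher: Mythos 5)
Your proof is correct and follows essentially the same route as the paper's: monotonicity is immediate from condition (3) of Definition \ref{def:monotone map}, and injectivity comes from the chain $I_\X = R\wedge R^\dag = F^\dag\circ(S\wedge S^\dag)\circ F = F^\dag\circ F$ using Proposition \ref{prop:intersection}. Your added check that Proposition \ref{prop:intersection} requires only that $F$ be a function (so no circular appeal to injectivity) is a sound observation, though the paper leaves it implicit.
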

\begin{proof}
Monotonicity directly follows from Definition \ref{def:monotone map}. Since $R$ and $S$ are both orders, and $R=F^\dag\circ S\circ F$, we obtain
\[  I_\X=R\wedge R^\dag=  (F^\dag\circ S\circ F)\wedge (F^\dag\circ S^\dag\circ F)=F^\dag\circ (S\wedge S^\dag)\circ F=F^\dag\circ F,\]
where we used Proposition \ref{prop:intersection} in the penultimate equality.
\end{proof}

\begin{lemma}\label{lem:order embeddings compose to an order embedding}
Let $(\X,R)$, $(\Y,S)$, and $(\Z,T)$ be quantum posets, and let $F_1:\X\to\Y$ and $F_2:\Y\to\Z$ be order embeddings. Then the composition $F_2\circ F_1$ is an order embedding, too.
\end{lemma}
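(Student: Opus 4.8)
The plan is to verify the defining equation of an order embedding for the composite directly, by pulling the order $T$ on $\Z$ back along $F_2 \circ F_1$ and showing that it equals $R$. First I would observe that $F_2 \circ F_1$ is itself a function from $\X$ to $\Z$, since functions compose in the category $\cat{qSet}$; this is what makes it sensible to ask whether $F_2 \circ F_1$ is an order embedding at all. The defining data are the two hypotheses $R = F_1^\dag \circ S \circ F_1$ and $S = F_2^\dag \circ T \circ F_2$, and the goal is the single equation $R = (F_2 \circ F_1)^\dag \circ T \circ (F_2 \circ F_1)$.

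The key structural fact I would invoke is that the dagger on $\cat{qRel}$ is a contravariant involution, so that $(F_2 \circ F_1)^\dag = F_1^\dag \circ F_2^\dag$. Together with associativity of composition in $\cat{qRel}$, this lets me rewrite the right-hand side as
\[
(F_2 \circ F_1)^\dag \circ T \circ (F_2 \circ F_1) = F_1^\dag \circ \left(F_2^\dag \circ T \circ F_2\right) \circ F_1.
\]
The core of the argument is then a two-step substitution. First, since $F_2$ is an order embedding, the parenthesized middle factor equals $S$, yielding $F_1^\dag \circ S \circ F_1$. Second, since $F_1$ is an order embedding, this in turn equals $R$, which is exactly what is to be shown.

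I do not expect any genuine obstacle here: the statement is essentially the assertion that the pullback-of-order operation is functorial along composition, and it reduces to the bookkeeping of inserting the two defining identities in the correct order. The only points requiring care are the contravariance of the dagger and the associativity of composition, both of which are part of the dagger-compact structure of $\cat{qRel}$ recalled earlier, and the preliminary remark that the composite of two functions is again a function, so that the final equation is a statement about a genuine order embedding rather than merely a binary relation.
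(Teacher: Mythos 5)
Your proof is correct and follows exactly the same route as the paper's: apply contravariance of the dagger, substitute $S = F_2^\dag \circ T \circ F_2$, then substitute $R = F_1^\dag \circ S \circ F_1$. No issues.
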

\begin{proof}
Since $F_1$ and $F_2$ are order embeddings, we have $R=F_1^\dag\circ S\circ F_1$ and $S=F_2^\dag\circ T\circ F_2$. Hence
$(F_2\circ F_1)^\dag\circ T\circ (F_2\circ F_1)=F_1^\dag\circ F_2^\dag\circ T\circ F_2\circ F_1=F_1^\dag\circ S\circ F_1=R$. \qedhere 
\end{proof}

\begin{definition}\label{def:order iso}
Let $(\X,R)$ and $(\Y,S)$ be quantum posets. A monotone map $F:\X\to\Y$ is called an \emph{order isomorphism} if it is bijective, and its inverse $F^\dag$ is monotone, too.
\end{definition}

\begin{proposition}\label{prop:order iso is surjective order embedding}
Let $(\X,R)$ and $(\Y,S)$ be quantum posets, and let $F:\X\to\Y$ be a function. Then the following statements are equivalent:
\begin{itemize}
    \item[(a)] $F$ is an order isomorphism;
    \item[(b)] $F$ is a surjective order embedding;
    \item[(c)] $F$ is a bijection such that $F\circ R=S\circ F$.
\end{itemize}  
\end{proposition}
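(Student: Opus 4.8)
The plan is to establish the cycle of implications (a) $\Rightarrow$ (c) $\Rightarrow$ (b) $\Rightarrow$ (a). The single workhorse throughout is that a bijection $F$ satisfies $F^\dagger \circ F = I_\X$ and $F \circ F^\dagger = I_\Y$ \emph{with equality}, so that inside a composite $F$ and $F^\dagger$ may be cancelled against one another. Every step then amounts to left- or right-multiplying an (in)equality by $F$ or $F^\dagger$ and simplifying via these cancellations; I will use implicitly that composition is monotone with respect to $\leq$.

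For (a) $\Rightarrow$ (c), suppose $F$ is an order isomorphism, so $F$ is bijective and both $F$ and $F^\dagger \colon (\Y,S) \to (\X,R)$ are monotone. Monotonicity of $F$ gives $F \circ R \leq S \circ F$ at once. For the reverse inequality I would begin from the monotonicity of $F^\dagger$, namely $F^\dagger \circ S \leq R \circ F^\dagger$, left-multiply by $F$ and use $F \circ F^\dagger = I_\Y$ to get $S \leq F \circ R \circ F^\dagger$, then right-multiply by $F$ and use $F^\dagger \circ F = I_\X$ to get $S \circ F \leq F \circ R$. Combining the two inequalities yields the equality $F \circ R = S \circ F$ of (c).

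For (c) $\Rightarrow$ (b), here $F$ is a bijection with $F \circ R = S \circ F$; surjectivity is immediate from bijectivity, so it remains to exhibit $F$ as an order embedding. Left-multiplying $F \circ R = S \circ F$ by $F^\dagger$ and using $F^\dagger \circ F = I_\X$ gives $R = F^\dagger \circ S \circ F$, as required. For (b) $\Rightarrow$ (a), suppose $F$ is a surjective order embedding. By Lemma \ref{lem:order embeddings are injective and monotone}, $F$ is injective and monotone; together with surjectivity this makes $F$ a bijection, and hence $F^\dagger$ is a function. To see that $F^\dagger$ is monotone, i.e. $F^\dagger \circ S \leq R \circ F^\dagger$, I would use $R = F^\dagger \circ S \circ F$ together with $F \circ F^\dagger = I_\Y$ to compute $R \circ F^\dagger = F^\dagger \circ S \circ F \circ F^\dagger = F^\dagger \circ S$, so the required inequality in fact holds as an equality.

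I do not expect a genuine obstacle: the entire content is bookkeeping of where bijectivity is invoked to cancel $F$ against $F^\dagger$, and the only external input is Lemma \ref{lem:order embeddings are injective and monotone} in the last implication. The point requiring the most care is the reverse inequality in (a) $\Rightarrow$ (c), where the monotonicity inequality for $F^\dagger$ must be multiplied on the correct sides in the correct order for the cancellations to land.
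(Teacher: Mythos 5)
Your proposal is correct and uses essentially the same method as the paper: a cycle of implications driven by cancelling $F$ against $F^\dagger$ via the bijectivity identities, with Lemma \ref{lem:order embeddings are injective and monotone} supplying injectivity and monotonicity of an order embedding. The only difference is that you traverse the cycle as (a) $\Rightarrow$ (c) $\Rightarrow$ (b) $\Rightarrow$ (a) whereas the paper does (a) $\Rightarrow$ (b) $\Rightarrow$ (c) $\Rightarrow$ (a); the individual computations are of the same character and all check out.
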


\begin{proof}
First assume that $F$ is an order isomorphism. We have that
$R\leq F^\dag\circ S\circ F$ by the monotonicity of $F$.
Since $F^\dag$ is also monotone, we have
$ S\leq F\circ R\circ F^\dag.$
Since $F$ is injective, it follows that $F^\dag\circ S\circ F\leq F^\dag\circ F\circ R\circ F^\dag\circ F=R.$
Hence $R=F^\dag\circ S\circ F$, so $F$ is an order embedding, which is surjective since $F$ is a bijection. Therefore, (a) implies (b).

Assume that (b) holds. Then $R=F^\dag\circ S\circ F$. We have that $F\circ R=F\circ F^\dag\circ S\circ F=S\circ F$ by the surjectivity of $F$.
Moreover, since $F$ is an order embedding, it is injective by Lemma \ref{lem:order embeddings are injective and monotone}, and it is hence bijective. So (c) holds.

Finally, we show that (c) implies (a). Hence, let $F$ be a bijection such that $F\circ R=S\circ F$. This equality immediately gives that $F$ is monotone. Moreover, the bijectivity of $F$ yields $F^\dag\circ S=F^\dag\circ S\circ F\circ F^\dag=F^\dag\circ F\circ R\circ F^\dag=R\circ F^\dag,$
which implies that $F^\dag$ is monotone, too. We conclude that $F$ is an order isomorphism.
\end{proof}

\section{Monomorphisms and epimorphisms} We show that the monomorphisms of $\cat{qPOS}$ are exactly the injective monotone functions and that all extremal epimorphisms of $\cat{qPOS}$ are surjective. We will later use both results to show that $\cat{qPOS}$ is cocomplete. We do not characterize arbitrary epimorphisms.

\begin{lemma}\label{lem:monomorphisms in qPOS}
Let $(\X,R)$ and $(\Y,S)$ be quantum posets, and let $M:\X\to\Y$ be monotone. Then $M$ is a monomorphism in $\qPOS$ if and only if $M$ is injective.
\end{lemma}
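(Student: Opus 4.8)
The plan is to prove both implications through the characterization of injectivity of a function $F$ as the identity $F^\dagger \circ F = I_\X$, which is exactly the form of injectivity used in the proof of Lemma~\ref{lem:pullback order}. Under this characterization the two directions become short.

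First I would dispatch the easy implication. Assuming $M$ injective, we have $M^\dagger \circ M = I_\X$, and given any monotone functions $F, G \colon (\W, U) \to (\X, R)$ with $M \circ F = M \circ G$, composing on the left with $M^\dagger$ yields
\[ F = I_\X \circ F = M^\dagger \circ M \circ F = M^\dagger \circ M \circ G = I_\X \circ G = G, \]
so $M$ is left-cancellable, i.e., a monomorphism. This step uses nothing about the orders $R$, $S$, $U$, only that $M$ is a single-valued function and injective.

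For the converse I would exploit the fact that the order on the domain is irrelevant to cancellation, since any function out of a trivially ordered quantum set is automatically monotone by Example~\ref{ex:function with trivially ordered domain is monotone}. Concretely, supposing $M$ is a monomorphism in $\qPOS$, I would take arbitrary functions $f, g \colon \Z \to \X$ in $\qSet$ (not assumed monotone for $R$) satisfying $M \circ f = M \circ g$, equip $\Z$ with the trivial order $I_\Z$, and observe that $f$ and $g$ are then monotone functions $(\Z, I_\Z) \to (\X, R)$ whose composites with $M$ agree as monotone functions $(\Z, I_\Z) \to (\Y, S)$. Cancelling $M$ in $\qPOS$ gives $f = g$, so $M$ is left-cancellable with respect to \emph{arbitrary} functions of quantum sets; that is, $M$ is a monomorphism in $\qSet$.

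The final and main point is then to pass from ``monomorphism in $\qSet$'' to ``injective.'' I would invoke the known description that the monomorphisms of $\qSet$ are precisely the injective functions \cite{Kornell2}, giving $M^\dagger \circ M = I_\X$ as desired. This is the only place where the argument leaves $\qPOS$, and I expect it to be the crux: the trivial-order reduction is what lets us import the description of monomorphisms in $\qSet$, so that no new construction of a pair of distinct monotone maps is required. If one preferred a self-contained argument avoiding that citation, one could instead form the kernel pair $p_1, p_2 \colon \K \to \X$ of $M$ in $\qSet$; when $M$ fails to be injective these projections are distinct, and equipping $\K$ with the trivial order exhibits $p_1, p_2$ as distinct monotone maps equalized by $M$, contradicting that $M$ is a monomorphism in $\qPOS$.
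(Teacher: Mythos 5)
Your proof is correct and follows essentially the same route as the paper: the converse uses exactly the paper's trivial-order trick together with the characterization of monomorphisms in $\qSet$ as injective functions (\cite{Kornell2}*{Proposition 8.4}), with the only cosmetic difference being that the paper argues by contraposition while you argue directly. Your forward direction cancels $M^\dagger$ explicitly rather than citing that injections are monomorphisms in $\qSet$, which is a harmless (and slightly more self-contained) variant of the same step.
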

\begin{proof}
Let $M:\X\to\Y$ be injective, let $(\W,T)$ be a quantum poset, and let $F,G:\W\to\X$ be two monotone functions such that $M\circ F=M\circ G$. Since $M$ is injective, it is a monomorphism in $\qSet$ (cf. \cite[Proposition 8.4]{Kornell2}); hence it follows that $F=G$. Thus $M$ is also a monomorphism in $\qPOS$.

We prove the converse by contraposition, so assume that $M$ is not injective. By \cite[Proposition 8.4]{Kornell2}, $M$ is not an monomorphism in $\qSet$. Hence, there is a quantum set $\W$ and there are functions $F,G:\W\to\X$ such that $F\neq G$, but $M\circ F=M\circ G$. Equip $\W$ with the trivial order $I_\W$. By Example \ref{ex:function with trivially ordered domain is monotone}, it follows that $F$ and $G$ are monotone. We conclude that $M$ is not a monomorphism in $\qPOS$.
\end{proof}

\begin{definition}\label{def:range of a function}
Let $F:\X\to\Y$ be a function from a quantum set $\X$ to a quantum set $\Y$. We define the \emph{range} of $F$ to be the subset
\[\ran F:=\Q\{Y\atomof\Y:F(X,Y)\neq 0\text{ for some }X\atomof\X\} \subsetof \Y.\]
We also define the binary relation $\overline F$ from $\X$ to $\ran F$ by $\overline F (X, Y) = F (X,Y)$, for $X \atomof \X$ and $Y \atomof \ran F$. It is routine to verify that $\overline F$ is a surjective function that satisfies $F = J_{\ran F} \circ \overline F$, where $J_{\ran F}\: \ran F \hookrightarrow \Y$ is the canonical inclusion \cite{Kornell2}*{Definition 8.2}.
\end{definition}

Thus, each function $F$ has a canonical factorization into an inclusion following a surjection. If $F$ is monotone, then both factors are also monotone, provided that we equip the range of $F$ with the induced order.

\begin{lemma}\label{lem:F-bar is monotone}
Let $(\X,R)$ and $(\Y,S)$ be quantum posets, and let $F$ be a monotone function from $(\X,R)$ to $(\Y,S)$. Then, $\overline F$ is a monotone function from $(\X, R)$ to $(\ran F, J^\dag \circ S \circ J)$, where $J = J_{\ran F}\: \ran F \hookrightarrow \Y$ is the canonical inclusion.
\end{lemma}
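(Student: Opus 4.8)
The plan is to verify condition (3) of Definition \ref{def:monotone map} for $\overline F$, with respect to the order $R$ on $\X$ and the induced order $T := J^\dagger \circ S \circ J$ on $\ran F$; that is, to show the single inequality $R \leq \overline F^\dagger \circ T \circ \overline F$. Since $\overline F$ is a function (Definition \ref{def:range of a function}), the three formulations of monotonicity in Definition \ref{def:monotone map} are equivalent, so establishing condition (3) suffices, and it is the most convenient one to attack because the induced order $T$ is itself defined as a pullback.

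The heart of the argument is a single algebraic identity. Substituting $T = J^\dagger \circ S \circ J$ and using that the dagger is contravariant with respect to composition, together with the factorization $F = J \circ \overline F$ supplied by Definition \ref{def:range of a function}, I would compute
\[ \overline F^\dagger \circ T \circ \overline F = \overline F^\dagger \circ J^\dagger \circ S \circ J \circ \overline F = (J \circ \overline F)^\dagger \circ S \circ (J \circ \overline F) = F^\dagger \circ S \circ F. \]
Because $F$ is monotone from $(\X,R)$ to $(\Y,S)$, condition (3) of Definition \ref{def:monotone map} applied to $F$ gives $R \leq F^\dagger \circ S \circ F$. Combining this with the identity above yields $R \leq \overline F^\dagger \circ T \circ \overline F$, which is exactly condition (3) for $\overline F$, so $\overline F$ is monotone.

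I expect essentially no obstacle here: the definition of the induced order $T$ on $\ran F$ is engineered precisely so that pulling $S$ back along $J$ and then along $\overline F$ reconstitutes the pullback of $S$ along the composite $F = J \circ \overline F$. The only points requiring care are the bookkeeping of the dagger's contravariance and the prior observation that $\overline F$ genuinely is a function, which is what makes the equivalent formulations of monotonicity available in the first place.
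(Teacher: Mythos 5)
Your proof is correct and is essentially the paper's argument: both rest on the factorization $F = J \circ \overline F$ and the monotonicity of $F$, with the paper checking condition (1) of Definition \ref{def:monotone map} (using $J^\dag \circ J = I_{\ran F}$) where you check the equivalent condition (3) via the exact identity $\overline F^\dag \circ (J^\dag \circ S \circ J) \circ \overline F = F^\dag \circ S \circ F$. The difference is purely cosmetic.
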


\begin{proof}
We reason that $\overline F \circ R = J^\dag \circ J \circ \overline F \circ R \leq J^\dag \circ F \circ R \leq J^\dag \circ S \circ F = J^\dag \circ S \circ J \circ \overline F$.
\end{proof}

It is easy to see that any surjective monotone function is an epimorphism in the category $\qPOS$. We do not show the converse; for our purposes, it is sufficient to show that any \emph{extremal} epimorphism is surjective. Recall that an epimorphism $E$ is said to be extremal if the only monomorphisms $M$ satisfying $E=M\circ F$ for some morphism $F$ are isomorphisms \cite{AdamekHerrlichStrecker}*{Definition 7.74}.

\begin{lemma}\label{lem:extremal epi in qPOS}
Let $(\X,R)$ and $(\Y,S)$ be quantum posets, and let $E:\X\to\Y$ be an extremal epimorphism in $\qPOS$. Then, $E$ is surjective.
\end{lemma}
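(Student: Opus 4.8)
The plan is to exploit the canonical surjection–inclusion factorization of $E$ furnished by Definition \ref{def:range of a function}, together with the characterization of monomorphisms in Lemma \ref{lem:monomorphisms in qPOS}. First I would write $E = J \circ \overline E$, where $J := J_{\ran E}\: \ran E \hookrightarrow \Y$ is the canonical inclusion and $\overline E$ is the corestriction of $E$ onto its range. Equipping $\ran E$ with the induced order $T := J^\dag \circ S \circ J$ turns $(\ran E, T)$ into a subposet of $(\Y,S)$; by Lemma \ref{lem:pullback order} this is genuinely a quantum poset (using that $J$ is injective), and by Lemma \ref{lem:F-bar is monotone} the corestriction $\overline E$ is a monotone function from $(\X,R)$ to $(\ran E, T)$. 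Thus $E = J \circ \overline E$ is a bona fide factorization in $\qPOS$.

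Next I would observe that $J$ is a monomorphism in $\qPOS$. By construction $J$ is injective, and it is monotone because it is literally an order embedding: the induced order is $T = J^\dag \circ S \circ J$ by definition. Lemma \ref{lem:monomorphisms in qPOS} then yields that $J$ is a monomorphism. Since $E$ is assumed to be an \emph{extremal} epimorphism and $E = J \circ \overline E$ exhibits $E$ as a monomorphism $J$ composed with the morphism $\overline E$, the very definition of extremality forces $J$ to be an isomorphism in $\qPOS$.

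It then remains to deduce surjectivity. The forgetful functor $\qPOS \to \qSet$ preserves isomorphisms, and the isomorphisms of $\qSet$ are exactly the bijections, so the underlying function of $J$ is bijective; in particular it is surjective. (Alternatively, any $\qPOS$-isomorphism is an order isomorphism, hence bijective by Definition \ref{def:order iso}.) But $J$ is the canonical inclusion of the subset $\ran E \subseteq \Y$, and such an inclusion is surjective only when $\ran E = \Y$. Hence $\ran E = \Y$, which is precisely the statement that $E$ is surjective. The step requiring the most care — the main obstacle — is ensuring that both factors $J$ and $\overline E$ genuinely live in $\qPOS$ (not merely in $\qSet$), so that the factorization is admissible in the extremality condition; this is exactly what Lemmas \ref{lem:pullback order} and \ref{lem:F-bar is monotone} secure, after which everything reduces to routine applications of the cited results.
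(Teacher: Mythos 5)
Your proposal is correct and follows essentially the same route as the paper: factor $E = J_{\ran E}\circ\overline E$ through the range with the induced order, note that the inclusion is a monomorphism in $\qPOS$, invoke extremality to conclude it is an isomorphism, and deduce $\ran E = \Y$. You merely spell out the justifications (Lemmas \ref{lem:pullback order}, \ref{lem:F-bar is monotone} and \ref{lem:monomorphisms in qPOS}) that the paper's proof leaves implicit.
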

\begin{proof}
The monotone function $E$ factors through $\ran E$ as $E = J_{\ran E} \circ \overline E$, with both factors being monotone for the induced order on $\ran E$. Because $E$ is an extremal epimorphism, the monomorphism $J_{\ran E}$ must be an isomorphsim in $\cat{qPOS}$, and therefore also an isomorphism in $\cat{qSet}$, i.e., a bijection. It clearly follows that $\ran E = \Y$. We conclude that $F = \overline F$ and, in particular, that $F$ is a surjection.
\end{proof}

\section{Order enrichment}\label{sec:quantum poset order enrichment}
We show that for all quantum posets $\X$ and $\Y$, the order on $\Y$ imposes an order on the hom set $\cat{qPOS}(\X, \Y)$, just as it does in the classical case. Let $(Y,\sqsubseteq)$ be a poset. Then, for any set $X$, we can order $\Set(X,Y)$ by $f\sqsubseteq g$ if and only if $f(x)\sqsubseteq g(x)$ for all $x\in X$. If we order the binary relations between two sets by inclusion, and regard $f$, $g$ and $\sqsubseteq$ as binary relations, this condition is equivalent to $g\leq (\sqsubseteq)\circ f$ and also to $(\sqsubseteq)\circ g\leq (\sqsubseteq) \circ f$. The next lemma shows that these last two inequalities between binary relations can be generalized to the quantum setting.

\begin{lemma}\label{lem:order between functions}
Let $\X$ be a quantum set, and let $(\Y,S)$ be a quantum poset. Then, we write $F\sqsubseteq G$ if $F,G\in\qSet(\X,\Y)$ satisfy any of the following equivalent conditions:
\begin{itemize}
    \item[(1)] $G\leq S\circ F$;
    \item[(2)] $S\circ G\leq S\circ F$;
    \item[(3)] $F\leq S^\dag\circ G$;
    \item[(4)] $G\circ F^\dag\leq S$.
\end{itemize}
Moreover, the relation $\sqsubseteq$ defines an order on $\qSet(\X,\Y)$.
\end{lemma}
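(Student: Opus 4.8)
The plan is to prove first that conditions (1)--(4) are equivalent, and then that the resulting relation $\sqsubseteq$ is reflexive, transitive, and antisymmetric. Throughout I would use only the defining inequalities of a function, $F^\dag\circ F\geq I_\X$ and $F\circ F^\dag\leq I_\Y$ (and the analogues for $G$), together with the three order properties of $S$, namely $I_\Y\leq S$, the idempotence $S\circ S=S$ from Lemma \ref{lem:basic facts on quantum orders}, and the antisymmetry $S\wedge S^\dag=I_\Y$. The antisymmetry axiom for $S$ will be needed exactly once, in the proof that $\sqsubseteq$ is antisymmetric.

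For the equivalences I would run a short cycle. Here (1)$\Leftrightarrow$(2) uses only $I_\Y\leq S$ and $S\circ S=S$: composing $G\leq S\circ F$ on the left with $S$ gives $S\circ G\leq S\circ S\circ F=S\circ F$, and conversely $I_\Y\leq S$ yields $G=I_\Y\circ G\leq S\circ G\leq S\circ F$. For (1)$\Leftrightarrow$(4) I would use the standard interplay of a function with its dagger: from $G\leq S\circ F$ one gets $G\circ F^\dag\leq S\circ F\circ F^\dag\leq S$ because $F\circ F^\dag\leq I_\Y$, and from $G\circ F^\dag\leq S$ one recovers $G=G\circ I_\X\leq G\circ F^\dag\circ F\leq S\circ F$ because $F^\dag\circ F\geq I_\X$. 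Finally (3)$\Leftrightarrow$(4) follows by daggering: since $(-)^\dag$ reverses composition and preserves the order, $G\circ F^\dag\leq S$ is equivalent to $F\circ G^\dag\leq S^\dag$, and the same function-dagger argument (now using the totality and single-valuedness of $G$) turns this into $F\leq S^\dag\circ G$, which is (3).

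With $F\sqsubseteq G$ defined by (1), reflexivity is immediate from $I_\Y\leq S$, since $F=I_\Y\circ F\leq S\circ F$. Transitivity reads most cleanly through (2): if $S\circ G\leq S\circ F$ and $S\circ H\leq S\circ G$, then $S\circ H\leq S\circ F$, so $F\sqsubseteq H$. For antisymmetry, suppose $F\sqsubseteq G$ and $G\sqsubseteq F$ and read both through condition (4). From $F\sqsubseteq G$ we get $G\circ F^\dag\leq S$, while from $G\sqsubseteq F$ we get $F\circ G^\dag\leq S$, whose dagger is $G\circ F^\dag\leq S^\dag$. Hence $G\circ F^\dag\leq S\wedge S^\dag=I_\Y$, and therefore $G=G\circ I_\X\leq G\circ F^\dag\circ F\leq I_\Y\circ F=F$. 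The symmetric computation gives $F\leq G$, and since the order on binary relations is inclusion we conclude $F=G$.

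The only genuinely delicate point is the bookkeeping: one must invoke the correct one of the four conditions for each of $F\sqsubseteq G$ and $G\sqsubseteq F$ so that the antisymmetry axiom $S\wedge S^\dag=I_\Y$ becomes applicable, and one must be sure that daggering is order-preserving on the hom-lattices of $\cat{qRel}$ (which it is, as the dagger is a lattice involution). Everything else is a direct manipulation of the two function inequalities and the idempotence and reflexivity of $S$.
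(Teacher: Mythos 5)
Your proof is correct, and it fills in the equivalences and the reflexivity/transitivity checks that the paper leaves as ``straightforward.'' The one place where your route genuinely diverges from the paper's is the antisymmetry step, which is the only part the paper writes out in detail. The paper reads both hypotheses through conditions (1) and (3), obtaining $F \leq S \circ G$ and $F \leq S^\dag \circ G$, and then needs Proposition \ref{prop:intersection} to factor the meet as $(S \circ G) \wedge (S^\dag \circ G) = (S \wedge S^\dag) \circ G = G$, finishing with Lemma \ref{lem:inequality between functions is equality}. You instead read both hypotheses through condition (4), land on $G \circ F^\dag \leq S \wedge S^\dag = I_\Y$ by the bare universal property of the meet, and then unwind with the function inequalities $I_\X \leq F^\dag \circ F$ and monotonicity of composition. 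Your version buys a small economy: it never invokes the nontrivial fact that meets distribute over pre- or post-composition by functions, only that the dagger is an order isomorphism on hom-lattices (Lemma \ref{lem:action of dagger}) and that $X \leq A$ and $X \leq B$ imply $X \leq A \wedge B$. The paper's version is slightly shorter on the page because Proposition \ref{prop:intersection} does the work in one equality, and it closes with the general principle that an inequality between functions is an equality rather than running the symmetric computation; either closing move is fine, since $G \leq F$ alone already forces $G = F$ by Lemma \ref{lem:inequality between functions is equality}, so your second computation is not actually needed.
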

\begin{proof}
The equivalence of conditions (1)-(4) follows easily from the definitions of a function and of an order. In order to show that $\sqsubseteq$ is an order on $\qSet(\X,\Y)$, it is straightforward to show that $\sqsubseteq$ is reflexive and transitive. For antisymmetry, let $F,G\in\qSet(\X,\Y)$, and assume that $G\sqsubseteq F$ and $F\sqsubseteq G$. Thus, $F\leq S\circ G$ and $G\leq S\circ F$, or equivalently, $F\leq S^\dag\circ G$. 
We now compute that $F\leq (S\circ G)\wedge (S^\dag\circ G)=(S\wedge S^\dag)\circ G=I\circ G=G,$
where we use Proposition \ref{prop:intersection} for the first equality, and axiom (3) of an order for the second equality. By Lemma \ref{lem:inequality between functions is equality}, we obtain $F=G$. 
\end{proof}

\begin{lemma}\label{lem:right multiplication is monotone}
Let $(\X,R)$ be a quantum poset, let $\Y$ and $\Z$ be quantum sets, and let $F:\Y\to\Z$ be a function. Let $K_1$ and $K_2$ be functions $\Z\to\X$ such that $K_1\sqsubseteq K_2$. Then, $K_1\circ F\sqsubseteq K_2\circ F$.
\end{lemma}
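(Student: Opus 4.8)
The plan is to reduce everything to characterization (1) of the order $\sqsubseteq$ from Lemma \ref{lem:order between functions} and then to invoke the monotonicity of composition in $\qRel$. First I would unfold the hypothesis. Since $K_1, K_2 \in \qSet(\Z, \X)$ and $(\X, R)$ is a quantum poset, the relation $K_1 \sqsubseteq K_2$ is the order defined by Lemma \ref{lem:order between functions} with $(\X, R)$ playing the role of the codomain poset $(\Y, S)$. By condition (1) of that lemma, $K_1 \sqsubseteq K_2$ is equivalent to the inequality of binary relations $K_2 \leq R \circ K_1$.

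Next I would compose on the right by $F$. Composition of binary relations in $\qRel$ is monotone with respect to the underlying order $\leq$, this being part of the order-enriched structure of $\qRel$ \cite{Kornell2}*{Section 3}. Hence composing both sides of $K_2 \leq R \circ K_1$ on the right with $F\:\Y\to\Z$ yields $K_2 \circ F \leq R \circ K_1 \circ F$, and by associativity the right-hand side equals $R \circ (K_1 \circ F)$. Thus I obtain $K_2 \circ F \leq R \circ (K_1 \circ F)$.

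Finally I would read this inequality back through Lemma \ref{lem:order between functions}. The functions $K_1 \circ F$ and $K_2 \circ F$ both lie in $\qSet(\Y, \X)$, and the inequality just derived is precisely condition (1) of that lemma applied to the pair $(K_1 \circ F,\, K_2 \circ F)$. Therefore $K_1 \circ F \sqsubseteq K_2 \circ F$, as required. I do not expect any genuine obstacle in this argument; the only points meriting care are that the symbol $\sqsubseteq$ denotes two different hom-orders, on $\qSet(\Z,\X)$ in the hypothesis and on $\qSet(\Y,\X)$ in the conclusion, both induced by the same order $R$, and that right-composition by a fixed function preserves $\leq$. In effect the proof establishes the sharper statement that precomposition by $F$ is a monotone map between these hom-posets.
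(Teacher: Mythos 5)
Your proof is correct and is essentially identical to the paper's: both unfold $K_1 \sqsubseteq K_2$ as $K_2 \leq R \circ K_1$ via condition (1) of Lemma \ref{lem:order between functions}, compose on the right with $F$, and read the resulting inequality back as $K_1 \circ F \sqsubseteq K_2 \circ F$. Your remarks on the two hom-orders and on monotonicity of right composition are accurate but not needed beyond what the paper already records.
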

\begin{proof}
Let $K_1,K_2:\Z\to\X$ be such that $K_1\sqsubseteq K_2$. Then, $K_2\leq R\circ K_1,$ and
hence $
K_2\circ F\leq R\circ K_1\circ F,$
which expresses that $K_1\circ F\sqsubseteq K_2\circ F$.
\end{proof}

\begin{definition}\label{def:order on qPos}
Let $(\X,R)$ and $(\Y,S)$ be quantum posets. Then, we order $\qPOS(\X,\Y)$ by the induced order $\sqsubseteq$ from $\qSet(\X,\Y)$ (cf. Lemma \ref{lem:order between functions}). 
\end{definition}

We note that the order on $\qPOS(\X,\Y)$ only depends on $S$ and not on $R$. It is the same in the classical case, where the order on $\POS((X,\sqsubseteq_X),(Y,\sqsubseteq_Y))$ is defined by $f\leq g$ if and only if $f(x)\sqsubseteq_Y g(x)$ for all $x\in X$.

\begin{lemma}\label{lem:left multiplication by monotone map is monotone}
Let $(\X,R)$, $(\Y,S)$ and $(\Z,T)$ be quantum posets, and let $F:\Y\to\Z$ be monotone. Let  $K_1,K_2:\X\to\Y$ be functions (not necessarily monotone). If  $K_1\sqsubseteq K_2$, then $F\circ K_1\sqsubseteq F\circ K_2$
\end{lemma}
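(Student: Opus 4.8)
The plan is to translate both the hypothesis and the conclusion into the inequality form provided by Lemma~\ref{lem:order between functions}, and then to chain together two monotonicity facts: that composition of binary relations in $\qRel$ preserves the order $\leq$ in each argument, and that $F$ is monotone.

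First I would rewrite the hypothesis $K_1 \sqsubseteq K_2$ using condition~(1) of Lemma~\ref{lem:order between functions}, which yields $K_2 \leq S \circ K_1$. The order on $\qSet(\X,\Z)$ is governed by the order $T$ on $\Z$, so by the same condition applied to the functions $F \circ K_1$ and $F \circ K_2$ into $(\Z,T)$, the desired conclusion $F \circ K_1 \sqsubseteq F \circ K_2$ is equivalent to the single inequality $F \circ K_2 \leq T \circ F \circ K_1$. This reduces the lemma to establishing that one inequality.

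The computation then proceeds by composing on the left with $F$. Since composition in $\qRel$ is monotone in each argument, from $K_2 \leq S \circ K_1$ we obtain $F \circ K_2 \leq F \circ S \circ K_1$. Because $F$ is monotone, Definition~\ref{def:monotone map}(1) gives $F \circ S \leq T \circ F$, and composing on the right with $K_1$ yields $F \circ S \circ K_1 \leq T \circ F \circ K_1$. Combining these two inequalities gives exactly $F \circ K_2 \leq T \circ F \circ K_1$, as required.

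I do not anticipate any genuine obstacle: the entire content of the lemma is that the order relation $S$ on $\Y$ can be pushed past $F$ and converted into the order relation $T$ on $\Z$, which is precisely what the monotonicity of $F$ asserts. This is also the feature that distinguishes left multiplication from the right multiplication treated in Lemma~\ref{lem:right multiplication is monotone}, where no monotonicity hypothesis was needed because the relevant order relation stayed on the outside and never had to traverse $F$.
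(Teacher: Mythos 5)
Your proof is correct and follows exactly the same route as the paper's: rewrite $K_1 \sqsubseteq K_2$ as $K_2 \leq S \circ K_1$, compose with $F$ on the left, and use the monotonicity inequality $F \circ S \leq T \circ F$ to obtain $F \circ K_2 \leq T \circ F \circ K_1$. Your closing remark contrasting this with Lemma~\ref{lem:right multiplication is monotone} is also accurate.
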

\begin{proof}
Recall that, by definition, $F$ is monotone if and only if $F\circ S\leq T\circ F$.
Let $K_1,K_2\in \qPOS( \X,\Y)$ such that $K_1\sqsubseteq K_2$. This means that $K_2\leq S\circ K_1$, so we compute that
$F\circ K_2\leq F\circ S\circ K_1\leq T\circ F\circ K_1$, and we
conclude that $F\circ K_1\sqsubseteq F\circ K_2$.
\end{proof}

\begin{lemma}\label{lem:homsets are monotone}
Let $(\X,R),(\Y,S)$ and $(\Z,T)$ be quantum posets, and let $F:(\Y,S)\to (\Z,T)$ be monotone. Then,
\begin{align*}
    \qPOS(\X,F): \qPOS( \X,\Y)  \to \qPOS( \X,\Z), \qquad K \mapsto F\circ K,
    \end{align*}
and 
\begin{align*}
\qPOS(F,\X):\qPOS( \Z,\X)  \to\qPOS(\Y,\X), \qquad K \mapsto K\circ F    
\end{align*}
   are monotone.
\end{lemma}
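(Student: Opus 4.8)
The plan is to observe that, once the two assignments $\qPOS(\X,F)$ and $\qPOS(F,\X)$ are seen to be well-defined maps between the relevant hom-posets, their monotonicity is exactly the content of the two immediately preceding lemmas. The substantive work has already been done; what remains is to match each map to the correct lemma and to check that each lands in the stated hom-set.

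First I would verify well-definedness. The assignment $K \mapsto F \circ K$ must send a monotone function $K\colon \X \to \Y$ to a monotone function $\X \to \Z$; since $F$ is monotone by hypothesis, this follows from Lemma \ref{lem:composition of monotone functions is monotone}. Symmetrically, $K \mapsto K \circ F$ sends a monotone $K\colon \Z \to \X$ to the composite $K \circ F\colon \Y \to \X$, which is again monotone by Lemma \ref{lem:composition of monotone functions is monotone}. Thus both assignments indeed take values in the stated hom-sets, which are ordered by $\sqsubseteq$ as in Definition \ref{def:order on qPos}.

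Next I would establish monotonicity of each map with respect to these orders. For $\qPOS(\X,F)$, I must show that $K_1 \sqsubseteq K_2$ in $\qPOS(\X,\Y)$ implies $F \circ K_1 \sqsubseteq F \circ K_2$ in $\qPOS(\X,\Z)$; since $F$ is monotone, this is precisely Lemma \ref{lem:left multiplication by monotone map is monotone}. For $\qPOS(F,\X)$, I must show that $K_1 \sqsubseteq K_2$ in $\qPOS(\Z,\X)$ implies $K_1 \circ F \sqsubseteq K_2 \circ F$ in $\qPOS(\Y,\X)$; as the order on each of these hom-sets is determined solely by the codomain poset $(\X,R)$, this is precisely Lemma \ref{lem:right multiplication is monotone}, with the present $F\colon \Y \to \Z$ playing the role of the function $F$ there.

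In truth there is no real obstacle: the present statement merely repackages Lemmas \ref{lem:right multiplication is monotone} and \ref{lem:left multiplication by monotone map is monotone} as assertions about the monotonicity of the two hom-functors. The only point requiring care is bookkeeping, namely matching the order on each hom-set---which depends only on the codomain, per Definition \ref{def:order on qPos}---to the correct earlier lemma, and observing that in Lemma \ref{lem:right multiplication is monotone} the common domain of $K_1, K_2$ is treated as a mere quantum set, so that the orders $S$ and $T$ on $\Y$ and $\Z$ play no role in the monotonicity of $\qPOS(F,\X)$.
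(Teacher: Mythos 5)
Your proposal is correct and follows the paper's own (one-line) proof, which likewise just cites Lemmas \ref{lem:right multiplication is monotone} and \ref{lem:left multiplication by monotone map is monotone}; your additional well-definedness check via Lemma \ref{lem:composition of monotone functions is monotone} is a reasonable piece of bookkeeping that the paper leaves implicit. The matching of each hom-map to the correct lemma, and the observation that the order on each hom-set depends only on the codomain, are both accurate.
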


\begin{proof}
This follows from Lemmas \ref{lem:right multiplication is monotone} and \ref{lem:left multiplication by monotone map is monotone}.
\end{proof}

\begin{theorem}
The category $\qPOS$ is enriched over $\POS$.
\end{theorem}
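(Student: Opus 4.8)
The plan is to verify the defining data and coherence conditions for a category enriched over $\POS$ equipped with its cartesian monoidal structure. Since $\qPOS$ has already been shown to be an ordinary category (Lemma \ref{lem:composition of monotone functions is monotone}), the associativity and identity axioms of the enrichment are equalities of monotone functions that reduce to equalities of the underlying functions, and these already hold in $\qPOS$. Likewise, each unit map out of the terminal poset $\mathbf 1$ is automatically monotone. Thus the only substantive content is that each hom-set carries the structure of a poset and that composition is monotone as a map out of the product poset.

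The first point is immediate from the preceding development. Lemma \ref{lem:order between functions} shows that $\sqsubseteq$ is an order on $\qSet(\X,\Y)$, and Definition \ref{def:order on qPos} equips $\qPOS(\X,\Y)$ with the restriction of this order to the subset of monotone functions. The restriction of a partial order to a subset is again a partial order, so each $\qPOS(\X,\Y)$ is an object of $\POS$.

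For the second point, I would show that the composition map $(G,F)\mapsto G\circ F$ from $\qPOS(\Y,\Z)\times\qPOS(\X,\Y)$ to $\qPOS(\X,\Z)$ is monotone for the product order. The key observation is the standard fact that a function of two variables into a poset is monotone for the product order exactly when it is monotone in each argument separately. Separate monotonicity is precisely the content of Lemma \ref{lem:homsets are monotone}: fixing $F$, the assignment $G\mapsto G\circ F$ is the monotone map $\qPOS(F,\Z)$, and fixing $G$, the assignment $F\mapsto G\circ F$ is the monotone map $\qPOS(\X,G)$. Consequently, whenever $G_1\sqsubseteq G_2$ and $F_1\sqsubseteq F_2$ we obtain $G_1\circ F_1\sqsubseteq G_2\circ F_1\sqsubseteq G_2\circ F_2$, which is joint monotonicity.

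There is no genuine obstacle here: all the real work has been carried out in Lemmas \ref{lem:right multiplication is monotone}, \ref{lem:left multiplication by monotone map is monotone} and \ref{lem:homsets are monotone}. The only point that requires a moment's care is recognizing that, for a category that is already given, the condition of being enriched over $\POS$ collapses to the poset structure on hom-sets together with joint monotonicity of composition, and that the latter follows from the two separate monotonicity statements by chaining them across the product order.
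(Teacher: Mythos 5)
Your proposal is correct and follows essentially the same route as the paper: hom-sets are posets by Definition \ref{def:order on qPos}, and monotonicity of composition is supplied by Lemma \ref{lem:homsets are monotone}. The only difference is that you explicitly spell out the (standard) passage from separate to joint monotonicity via the chain $G_1\circ F_1\sqsubseteq G_2\circ F_1\sqsubseteq G_2\circ F_2$, which the paper leaves implicit.
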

\begin{proof}
For this statement to hold, we must have that each homset in $\qPOS$ is a poset, which is the case by Definition \ref{def:order on qPos}, and that composition is monotone in both arguments. The latter follows directly from Lemma \ref{lem:homsets are monotone}.
\end{proof}

\section{Completeness} We show that the category $\cat{qPOS}$ is complete by defining pre-orders on the limits that we have in $\cat{qSet}$. The main technical challenge is showing that these pre-orders are in fact partial orders.

\begin{definition}[cf. \cite{Weaver1}*{Definition 2.6(a)}]
Let $\X$ be a quantum set. Then $E\in\qRel(\X,\X)$ is called an \emph{equivalence relation} if \begin{itemize}
\item[(1)] $I_\X\leq E$
    \item[(2)] $E\circ E\leq E$;
    \item[(3)] $E^\dag=E$.
    \end{itemize}
\end{definition}

\begin{lemma}\label{lem:equivalence relations generated C*-subalgebras}
Let $E$ be an equivalence relation on a quantum set $\X$, and let $X\atomof\X$. Then $E(X,X)$ is a unital C*-subalgebra of $L(X):=L(X,X)$.
\end{lemma}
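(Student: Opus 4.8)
The plan is to observe that the first two axioms of an equivalence relation are precisely the two axioms of a pre-order, so that the multiplicative structure comes for free from an earlier result, and then to extract the remaining C*-structure from the symmetry axiom together with finite-dimensionality.

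First I would note that axioms (1) and (2) in the definition of an equivalence relation coincide with axioms (1) and (2) of Definition \ref{def:quantum preorder and order}; hence $E$ is in particular a pre-order on $\X$. Lemma \ref{lem:orders generated antisymmetric subalgebras} then applies verbatim and shows that $E(X,X)$ is a unital subalgebra of $L(X)$. This already secures the multiplicative structure and the presence of a unit, leaving only the involutive and topological aspects of being a C*-subalgebra.

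It remains to check that $E(X,X)$ is closed under the adjoint and norm-closed. Recalling that the dagger of a binary relation acts on each block by taking the operator adjoint, so that $E^\dag(X,X) = E(X,X)^\dag$, the symmetry axiom $E^\dag = E$ yields, on the diagonal block $(X,X)$, the equality $E(X,X) = E(X,X)^\dag$. Thus $E(X,X)$ is invariant under the adjoint and is therefore a unital $*$-subalgebra of $L(X)$. Finally, since $X$ is finite-dimensional, so is $L(X)$, and every subspace of $L(X)$ is automatically norm-closed; hence $E(X,X)$ is a norm-closed unital $*$-subalgebra of $L(X)$, that is, a unital C*-subalgebra.

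I would not expect a genuine obstacle here. The only points deserving care are the blockwise bookkeeping of the dagger, which is exactly what converts the global identity $E^\dag = E$ into adjoint-invariance of the single block $E(X,X)$, and the remark that finite-dimensionality renders the topological closedness requirement automatic, so that ``unital $*$-subalgebra'' and ``unital C*-subalgebra'' coincide in this setting.
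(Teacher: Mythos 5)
Your proof is correct and follows essentially the same route as the paper's: invoke Lemma \ref{lem:orders generated antisymmetric subalgebras} for the unital subalgebra structure (since an equivalence relation is in particular a pre-order), specialize the symmetry axiom $E^\dag = E$ to the block $(X,X)$ to get closure under adjoints, and use finite-dimensionality of $L(X)$ for norm-closedness. No gaps.
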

\begin{proof}
The subspace $E(X,X)$ is a unital subalgebra of $L(X)$ by Lemma \ref{lem:orders generated antisymmetric subalgebras}, because $E$ is a pre-order. Condition (3) specializes to the equation $E(X,X)^\dagger = E(X,X)$, so $E(X,X)$ is furthermore a $*$-subalgebra. It is automatically closed in the norm topology because $L(X)$ is finite-dimensional.
\end{proof}

\begin{lemma}\label{lem:pre-order generates equivalence relation}
Let $\X$ be a quantum set and let $R\in\qRel(\X,\X)$ be a pre-order. Then $E=R\wedge R^\dag$ is an equivalence relation. 
\end{lemma}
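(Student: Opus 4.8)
The plan is to verify the three axioms of an equivalence relation for $E = R \wedge R^\dag$ directly, using the pre-order axioms for $R$ together with Lemma~\ref{lem:basic facts on quantum orders}, which tells us that $R^\dag$ is also a pre-order. Two of the three axioms should be essentially immediate. For the self-adjointness axiom, I would compute $E^\dag = (R \wedge R^\dag)^\dag = R^\dag \wedge R^{\dag\dag} = R^\dag \wedge R = E$, using that the dagger is an involution and that meets are symmetric; this shows condition (3). For reflexivity, since $R$ is a pre-order we have $I_\X \leq R$, and since $R^\dag$ is a pre-order (by Lemma~\ref{lem:basic facts on quantum orders}(1)) we also have $I_\X \leq R^\dag$. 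Hence $I_\X \leq R \wedge R^\dag = E$, giving condition (1).

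The one step requiring genuine care is transitivity, condition (2): I must show $E \circ E \leq E$. The natural approach is to bound $E \circ E$ above by both $R$ and $R^\dag$ separately, and then conclude via the universal property of the meet. Since $E = R \wedge R^\dag \leq R$, monotonicity of composition gives $E \circ E \leq R \circ R \leq R$, where the last inequality is the transitivity of the pre-order $R$. Symmetrically, since $E \leq R^\dag$ and $R^\dag$ is a pre-order with $R^\dag \circ R^\dag \leq R^\dag$, we obtain $E \circ E \leq R^\dag \circ R^\dag \leq R^\dag$. Because $E \circ E$ lies below both $R$ and $R^\dag$, it lies below their meet, so $E \circ E \leq R \wedge R^\dag = E$, establishing condition (2).

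The main obstacle, such as it is, lies in being careful with the order-theoretic structure of $\qRel(\X,\X)$: I am implicitly using that composition $(-)\circ(-)$ is monotone in each argument with respect to the ordering $\leq$, and that $A \leq B$ and $A \leq C$ together imply $A \leq B \wedge C$ (the defining property of the meet). Both facts are part of the standing enrichment of $\qRel$ over modular orthomodular lattices mentioned in the introduction, so they may be invoked freely. No delicate computation with the subspaces $E(X,X)$ is needed here; the entire argument proceeds abstractly at the level of binary relations. I would present the proof in the order: condition (3), then condition (1), then condition (2), since the transitivity argument is cleanest once the reader has the symmetry of $R$ and $R^\dag$ firmly in mind.
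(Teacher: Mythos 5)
Your proof is correct and follows essentially the same route as the paper's: all three axioms are verified directly from the lattice-enriched structure of $\qRel$, with reflexivity and self-adjointness immediate and transitivity obtained by bounding $E \circ E$ above by both $R$ and $R^\dag$. The only cosmetic difference is that the paper expands $(R \wedge R^\dag)\circ(R\wedge R^\dag)$ via the sub-distributivity of composition over meets (its Lemma on intersections), whereas you use monotonicity of composition in each argument plus the universal property of the meet, which amounts to the same thing.
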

\begin{proof}
By Lemma \ref{lem:basic facts on quantum orders}, $R^\dag$ is a pre-order, too, so we have both $I_\X\leq R$ and $I_\X\leq R^\dag,$ hence $I_\X\leq R\wedge R^\dag=E$.
Furthermore, we have
\begin{align*} E\circ E & =(R\wedge R^\dag)\circ (R\wedge R^\dag)\leq (R\circ (R\wedge R^\dag))\wedge (R^\dag\circ(R\wedge R^\dag))\\
& \leq (R\circ R)\wedge (R\circ R^\dag)\wedge (R^\dag\circ R)\wedge (R^\dag\circ R^\dag)\\
& \leq (R\circ R)\wedge (R^\dag\wedge R^\dag)\leq R\wedge R^\dag =E,
\end{align*}
where we used Lemma \ref{lem:intersection} for the first two inequalities, and the fact that both $R$ and $R^\dag$ are pre-orders in the last inequality. 
Finally, using Lemma \ref{lem:action of dagger}, we obtain  
\[  E^\dag=(R\wedge R^\dag)^\dag=R^\dag\wedge R=E. \qedhere\]
\end{proof}

The following lemma is essentially the dual of \cite[Lemma 8.5]{Kornell2} in $\qSet$.

\begin{lemma}\label{lem:equivalence relation lemma}
Let $\X$ be a quantum set, and let $E\in\qRel(\X,\X)$ be an equivalence relation such that $E\neq I_\X$. Then there exists a function $G:\X\to\X$ such that $G\neq I_\X$, $G\leq E$, $G\circ G=I_\X$ and $G^\dag=G^{-1}=G$. 
\end{lemma}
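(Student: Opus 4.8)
The plan is to split into two cases according to how $E$ fails to equal $I_\X$. Since $I_\X \le E$, the inequality $E \ne I_\X$ means that either (A) some diagonal block is too large, i.e.\ $E(X,X) \supsetneq \CC 1_X$ for some atom $X \atomof \X$, or else (B) every diagonal block is trivial, $E(X,X) = \CC 1_X$ for all $X$, yet some off-diagonal block $E(X,X')$ with $X \ne X'$ is nonzero; if neither held we would have $E = I_\X$. In each case I would exhibit $G$ explicitly: a ``local symmetry'' supported on a single atom in case (A), and a ``transposition'' swapping two atoms in case (B). In both cases $G$ acts as the identity on all other atoms, so that $G = G^\dag$ and $G \circ G = I_\X$ follow from the same block identities, and $G$ is then automatically a function because $G^\dag \circ G = G \circ G = I_\X \ge I_\X$ and $G \circ G^\dag = I_\X \le I_\X$; these also give $G^{-1} = G$.

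For case (A), Lemma \ref{lem:equivalence relations generated C*-subalgebras} tells us that $E(X,X)$ is a unital C*-subalgebra of $L(X)$, and by assumption it strictly contains $\CC 1_X$. Hence it contains a self-adjoint element outside $\CC 1_X$, which has at least two distinct eigenvalues; a spectral projection $p$ of this element then lies in $E(X,X)$ and satisfies $0 \ne p \ne 1_X$. I would set $v := 1_X - 2p$, a self-adjoint unitary with $v^2 = 1_X$ and $v \notin \CC 1_X$, and define $G$ by $G(X,X) = \CC v$, $G(Y,Y) = \CC 1_Y$ for every other atom $Y$, and $G(A,B) = 0$ for $A \ne B$. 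The identities $v = v^\dag$ and $v^2 = 1_X$ give $G^\dag = G$ and $G \circ G = I_\X$; the containment $\CC v \subseteq E(X,X)$ together with reflexivity on the remaining atoms gives $G \le E$; and $G(X,X) = \CC v \ne \CC 1_X$ gives $G \ne I_\X$.

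For case (B), fix atoms $X \ne X'$ and a nonzero $t \in E(X,X')$. Symmetry gives $t^\dag \in E(X',X)$, and transitivity together with the hypothesis $E(X,X) = E(X',X') = \CC 1$ forces $t^\dag t \in (E \circ E)(X,X) \le \CC 1_X$ and $t t^\dag \in (E \circ E)(X',X') \le \CC 1_{X'}$. Thus $t^\dag t = \lambda 1_X$ and $t t^\dag = \mu 1_{X'}$ with $\lambda, \mu > 0$; the first makes $t$ injective and the second makes it surjective, so $\dim X = \dim X'$ and $u := \lambda^{-1/2} t \in E(X,X')$ is an isometry between equal-dimensional spaces, hence a unitary $X \to X'$. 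I would then define $G$ by $G(X,X') = \CC u$, $G(X',X) = \CC u^\dag$, $G(Y,Y) = \CC 1_Y$ for every atom $Y \notin \{X,X'\}$, and $G = 0$ on all remaining blocks. Since $u^{\dag\dag}=u$ one checks $G^\dag = G$, and $u^\dag u = 1_X$, $u u^\dag = 1_{X'}$ give $(G \circ G)(X,X) = \CC 1_X$ and $(G \circ G)(X',X') = \CC 1_{X'}$, with all other blocks of $G \circ G$ collapsing to those of $I_\X$; hence $G \circ G = I_\X$. Finally $\CC u \subseteq E(X,X')$ and $\CC u^\dag \subseteq E(X',X)$, with reflexivity on the other atoms, give $G \le E$, while $G(X,X) = 0 \ne \CC 1_X$ gives $G \ne I_\X$.

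The routine C*-algebra in case (A) is unproblematic; the main obstacle is case (B), specifically verifying that a nonzero off-diagonal block of an equivalence relation must actually contain a unitary. This is exactly where transitivity (to land the products $t^\dag t$ and $t t^\dag$ back in the diagonal) and the triviality of the diagonal blocks (to force those products to be scalars) are both essential, and it is what forces $\dim X = \dim X'$. One should also take care with the composition convention when checking $G \circ G = I_\X$, but once the blocks are written out this is a direct computation.
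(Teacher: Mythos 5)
Your proof is correct and follows essentially the same route as the paper's: the same two-case split (diagonal block strictly containing $\CC 1_X$ versus all diagonal blocks trivial with a nonzero off-diagonal block), the same symmetry $1-2p$ built from a nontrivial projection in the first case, and the same normalization of a nonzero element of $E(X,X')$ to a unitary in the second, using transitivity and the triviality of the diagonal blocks to force $t^\dagger t$ and $tt^\dagger$ to be positive scalars. The only differences are cosmetic elaborations (spelling out how the projection is obtained from a self-adjoint non-scalar element, and noting explicitly that $\dim X = \dim X'$), which the paper leaves implicit.
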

\begin{proof}
We consider two cases. The first is $E(X,X)=I_\X(X,X)$ for each $X\atomof\X$; the second is $E(X_1,X_1)\neq I_\X(X_1,X_1)$ for some $X_1\in\X$. We consider the latter case first. By \ref{lem:equivalence relations generated C*-subalgebras}, $E(X_1,X_1)$ us a unital C*-subalgebra of $L(X_1, X_1)$, and by assumption, it is larger than $I_\X(X_1,X_1)=\CC 1_{X_1}$. Thus, it contains a non-trivial projection $p:X_1\to X_1$. Hence, it also contains $u:=1-2p$, which is a unitary $X_1\to X_1$ that satisfies $u=u^\dag=u^{-1}$. Now define $G\in\qRel(\X,\X)$ by 
\[ G(X,Y) = \begin{cases} \CC u, & X=Y=X_1;\\
\CC 1_X, & X=Y\neq X_1,\\
0, & X\neq Y.
\end{cases}\]
Clearly $G(X,Y)=G(Y,X)^\dag=G^\dag(X,Y)$ for all $X, Y \atomof \X$, so $G=G^\dag$. Moreover, we have $G(X_1,X_1)\leq E(X_1,X_1)$ by construction, hence $G\leq E$. A direct calculation yields $(G\circ G)(X,X)=\CC 1_X$ for all $X \atomof \X$, with $G(X,Y)$ vanishing otherwise; hence $G^2=I_\X$.

In the other case, since $E \neq I_\X$ but $E(X,X) = I_\X(X,X)$ for all $X \atomof \X$, there exist distinct $X_1,X_2\atomof\X$ such that $E(X_1,X_2) \neq I_\X(X_1, X_2) = 0$. Hence, let $a\in E(X_1,X_2)$ be non-zero. Its adjoint $a^\dag$ is in $ E(X_1,X_2)^\dag=E^\dag(X_2,X_1)=E(X_2,X_1)$. We compute that
\begin{align*}
  a^\dag a & \in E(X_2,X_1)\cdot E(X_1,X_2)\leq\bigvee_{Y\atomof\X} E(Y,X_1)\cdot E(X_1,Y)\\
  & =(E\circ E)(X_1,X_1)\leq E(X_1,X_1)=I(X_1,X_1)=\CC 1_{X_1},
  \end{align*}
  and similarly, $aa^\dag\in\CC 1_{X_2}$. Let $u=a/\|a\|$; then it follows that $u^\dag u=1_{X_1}$ and $uu^\dag=1_{X_2}$.
  Now define $G\in\qRel(\X,\X)$ by
  \[ G(X,Y) = \begin{cases} \CC u, & X=X_1, Y=X_2;\\
  \CC u^\dag, & X=X_2,Y=X_1;\\
\CC 1_X, & X_1\neq X=Y\neq X_2,\\
0, & \text{otherwise}.
\end{cases}\]
Here too, we find that $G^\dag(X,Y)=G(Y,X)^\dag=G(X,Y)$, so $G=G^\dag$, and $G(X,Y)\leq E(X,Y)$ by construction. 
A direct calculation yields $(G\circ G)(X,X)=\CC 1_X$ for all $X \atomof \X$, with $G(X,Y)$ vanishing otherwise; so here too, $G^2=I_\X$.

We conclude that in both cases we have $G\leq E$, and $G=G^\dag$, and $G\circ G=I_\X$. 
Hence
\[ G^\dag\circ G=G\circ G=I_\X=G\circ G=G\circ G^\dag,\]
so $G:\X\to\X$ is a function. 
\end{proof}

\begin{theorem}\label{thm:qPos is complete}
The category $\qPOS$ is complete. More specifically, given a diagram of shape $A$ consisting of objects $\X_\alpha$ for $\alpha\in A$ and monotone maps $F_f:\X_\alpha\to\X_\beta$ for each morphism $f:\alpha\to\beta$, the limit $(\X,R)$ in $\qPOS$ consists of the limit $\X$ in $\qSet$, and 
\[R=\bigwedge_{\alpha\in A}F^\dag_\alpha\circ R_\alpha\circ F_\alpha,\]
where the functions $F_\alpha:\X\to\X_\alpha$ are the limiting functions of this diagram in $\qSet$.
\end{theorem}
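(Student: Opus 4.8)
The plan is to build the $\qPOS$-limit directly on top of the $\qSet$-limit, which exists because $\qSet$ is complete \cite{Kornell2}. Taking the underlying $\qSet$-diagram with limit $\X$ and limiting functions $F_\alpha\colon\X\to\X_\alpha$, I would equip $\X$ with $R=\bigwedge_{\alpha\in A}F_\alpha^\dag\circ R_\alpha\circ F_\alpha$ and then discharge four obligations: that $R$ is a pre-order, that it is antisymmetric, that $(\X,R)$ with the $F_\alpha$ is a cone in $\qPOS$, and that this cone is universal. Only the second is genuinely delicate; the section heading already flags antisymmetry as the main technical challenge.

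First the routine steps. Each $F_\alpha^\dag\circ R_\alpha\circ F_\alpha$ is a pre-order on $\X$ by Lemma \ref{lem:pullback order} (it is the pullback of the pre-order $R_\alpha$ along the function $F_\alpha$), so their meet $R$ is a pre-order by Lemma \ref{lem:intersection of preorders is a preorder}. Each leg $F_\alpha$ is monotone because $R\leq F_\alpha^\dag\circ R_\alpha\circ F_\alpha$ is exactly condition (3) of Definition \ref{def:monotone map}; since the $F_\alpha$ already satisfy the cone equations in $\qSet$ and the diagram morphisms $F_f$ are monotone by hypothesis, $(\X,R)$ is a cone in $\qPOS$. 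For universality, given a monotone cone $G_\alpha\colon(\W,T)\to(\X_\alpha,R_\alpha)$, I take the unique mediating function $G\colon\W\to\X$ in $\qSet$, so that $F_\alpha\circ G=G_\alpha$. Monotonicity of each $G_\alpha$ gives $T\leq G_\alpha^\dag\circ R_\alpha\circ G_\alpha=G^\dag\circ(F_\alpha^\dag\circ R_\alpha\circ F_\alpha)\circ G$ for every $\alpha$; taking the meet over $\alpha$ and using Proposition \ref{prop:intersection} (conjugation by the function $G$ commutes with meets) yields $T\leq G^\dag\circ R\circ G$, which is the monotonicity of $G$. Uniqueness is inherited from $\qSet$.

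The real work is antisymmetry, and here I would follow the pattern suggested by the preparatory lemmas. Put $E:=R\wedge R^\dag$, an equivalence relation by Lemma \ref{lem:pre-order generates equivalence relation}, and suppose toward a contradiction that $E\neq I_\X$. Then Lemma \ref{lem:equivalence relation lemma} produces a function $G\colon\X\to\X$ with $G\neq I_\X$, $G\leq E$, and $G=G^\dag=G^{-1}$. Since $G\leq E\leq R$ and $G\leq E\leq R^\dag$, while $R\leq F_\alpha^\dag\circ R_\alpha\circ F_\alpha$ and $R^\dag\leq F_\alpha^\dag\circ R_\alpha^\dag\circ F_\alpha$ (using that the dagger distributes over the meet, Lemma \ref{lem:action of dagger}), I obtain for each $\alpha$
\[ G\leq(F_\alpha^\dag\circ R_\alpha\circ F_\alpha)\wedge(F_\alpha^\dag\circ R_\alpha^\dag\circ F_\alpha)=F_\alpha^\dag\circ(R_\alpha\wedge R_\alpha^\dag)\circ F_\alpha=F_\alpha^\dag\circ F_\alpha, \]
by Proposition \ref{prop:intersection} and the antisymmetry of $R_\alpha$. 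Composing on the left with $F_\alpha$ and using $F_\alpha\circ F_\alpha^\dag\leq I_{\X_\alpha}$ gives $F_\alpha\circ G\leq F_\alpha$; since both sides are functions, Lemma \ref{lem:inequality between functions is equality} forces $F_\alpha\circ G=F_\alpha$ for every $\alpha$. Thus $G$ is a mediating morphism of the limit cone $(\X,(F_\alpha))$ in $\qSet$, as is $I_\X$, so the uniqueness clause of the $\qSet$-limit gives $G=I_\X$, contradicting $G\neq I_\X$. Hence $E=I_\X$ and $R$ is an order.

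The main obstacle is precisely this antisymmetry argument, whose crux is the move of converting a hypothetical nontrivial symmetric part $E$ into the self-inverse function $G$ of Lemma \ref{lem:equivalence relation lemma}, pushing $G$ through each leg of the cone to show it fixes every $F_\alpha$, and then eliminating it via uniqueness of the mediating morphism. The one subsidiary point I would want to confirm is that Proposition \ref{prop:intersection} applies to arbitrary, not merely binary, meets: the antisymmetry computation uses only a binary meet and is unproblematic, but the universality argument ranges over a possibly infinite index set $A$. This should hold because conjugation by a function is a composite of meet-preserving operations, so I expect no difficulty, only a need to cite the result in its general form.
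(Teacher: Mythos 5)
Your proposal is correct and follows essentially the same route as the paper: the pre-order and universality steps are identical, and the antisymmetry argument uses the same key ingredients (Lemma \ref{lem:equivalence relation lemma}, Lemma \ref{lem:inequality between functions is equality}, Proposition \ref{prop:intersection}, and the uniqueness of the mediating morphism in $\qSet$). The only cosmetic difference is that the paper first proves $\bigwedge_{\alpha}F_\alpha^\dag\circ F_\alpha=I_\X$ as a standalone claim and then bounds $R\wedge R^\dag$ by it, whereas you run the contradiction directly on $R\wedge R^\dag$; also, your worry about infinite meets is moot since Proposition \ref{prop:intersection} is already stated for arbitrary index sets.
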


\begin{proof}
Firstly, we note that \cite[Proposition 8.7]{Kornell2} ensures that the diagram of shape $A$ in the statement indeed has a limit $\X$ in $\qSet$. Let $F_\alpha:\X\to\X_\alpha$ be the limiting functions, i.e., such that $F_f\circ F_\alpha=F_\beta$ for each function $f:\alpha\to \beta$ in $A$.

Let $E = \bigwedge_{\alpha\in A}F_\alpha^\dag\circ F_\alpha$; we claim that $E = I_\X$. Assume otherwise. The binary relation $E$ is a pre-order on $\X$ by Lemma \ref{lem:intersection of preorders is a preorder}. Furthermore, $E$ clearly satisfies $E^\dagger = E$, and it is therefore an equivalence relation. We apply Lemma \ref{lem:equivalence relation lemma} to obtain a function $G:\X\to\X$ such that $G\neq I_\X$ and $G\leq E$, and we calculate that $F_\alpha\circ G\leq F \circ E \leq F_\alpha\circ F_\alpha^\dag\circ F_\alpha\leq I_\X\circ F_\alpha=F_\alpha$ for each $\alpha \in A$. We conclude by Lemma \ref{lem:inequality between functions is equality} that $F_\alpha \circ G = F_\alpha$. This equality holds for each $\alpha\in A$, and hence $G=I_\X$, by the universal property of the limit in $\qSet$, contradicting our choice of $G$. Therefore, $E = I$.

Similarly, let $R = \bigwedge_{\alpha\in A}F_\alpha^\dag\circ R_\alpha \circ F_\alpha$; we claim that $R$ is an order on $\X$. Indeed, it is a pre-order by Lemma \ref{lem:intersection of preorders is a preorder}, and it is antisymmetric by the following calculation:
\begin{align*}
    R\wedge R^\dag & = \bigwedge_{\alpha,\beta\in A}(F_\alpha^\dag\circ R_\alpha\circ F_\alpha)\wedge (F_\beta^\dag\circ R_\beta\circ F_\beta)  \leq \bigwedge_{\alpha\in A}(F_\alpha^\dag\circ R_\alpha\circ F_\alpha)\wedge (F_\alpha^\dag\circ R_\alpha\circ F_\alpha)\\
    & = \bigwedge_{\alpha\in A}F^\dag_\alpha\circ (R_\alpha\wedge R_{\alpha}^\dag)\circ F_\alpha = \bigwedge_{\alpha\in A}F_\alpha^\dag\circ F_\alpha =E = I_\X
    \end{align*}
We have used Proposition \ref{prop:intersection} in the second equality. We conclude that $R$ is indeed an order on $\X$.

The definition of $R$ trivially implies that $R\leq F_\alpha^\dag\circ R_\alpha\circ F_\alpha,$ which expresses that $F_\alpha$ is monotone. Thus, we have a cone on the given diagram in $\qPOS$, and it remains only to show that it is a limiting cone.

Let $(\Y,T)$ be a quantum poset, and let $C_\alpha:\Y\to\X_\alpha$, for $\alpha\in A$, be monotone maps that together form a cone. Since $\X$ is the limit of the $\X_\alpha$ in $\qSet$, it follows that there is a unique function $H:\Y\to\X$ such that $C_\alpha=F_\alpha\circ H.$ By the monotonicity of $C_\alpha$, we have 
$T\leq C_\alpha^\dag\circ R_\alpha\circ C_\alpha=(F_\alpha\circ H)^\dag\circ R_\alpha\circ F_\alpha\circ H=H^\dag\circ F_\alpha^\dag\circ R_\alpha\circ F_\alpha\circ H.$
Hence,
$T\leq\bigwedge_{\alpha\in A}H^\dag\circ F_\alpha^\dag\circ R_\alpha\circ F_\alpha\circ H=H^\dag\circ \left(\bigwedge_{\alpha\in A}F_\alpha^\dag\circ R_\alpha\circ F_\alpha\right)\circ H=H^\dag\circ R\circ H,$
where we have used Proposition \ref{prop:intersection} in the penultimate equality. Hence, $H$ is monotone. We have thus established that the functions $F_\alpha$, for $\alpha \in A$, together form a limiting cone and that, more generally, the category $\qPOS$ is complete.
\end{proof}

\section{Cocompleteness}
We show that the category $\cat{qPOS}$ is cocomplete. Unlike limits in $\cat{qPOS}$, colimits in $\cat{qPOS}$ cannot be formed simply by ordering the corresponding colimits in $\cat{qSet}$. However, coproducts in $\cat{qPOS}$ are simply coproducts in $\cat{qSet}$, ordered appropriately, and we begin with this special case.

The categories $\qRel$ and $\qSet$ are cocomplete \cite{Kornell2}. The coproduct of an indexed family $\{\X_\alpha\}_{\alpha \in A}$ of quantum sets is the same in both categories, and it is easiest to characterize when the quantum sets $\X_\alpha$ have no atoms in common. In this special case, the coproduct $\biguplus_{\alpha \in A} \X_\alpha$ in $\cat{qRel}$ and $\cat{qSet}$ is simply their union $\bigcup_{\alpha \in A} \X_\alpha$, which is obtained by collecting the atoms of all of the quantum sets $\X_\alpha$ with $\alpha \in A$. Then, each quantum set $\X_\alpha$ is a subset of $\biguplus_{\alpha \in A} \X_\alpha$, and its inclusion function $J_{\X_\alpha}\: \X_\alpha \to\biguplus_{\alpha \in A} \X_\alpha$ is the corresponding edge of the colimiting cocone. This defines the coproduct of an arbitrary family up to isomorphism. In the general case, some of the quantum sets $\X_\alpha$ may have atoms in common, but we may replace these quantum sets by (disjoint) isomorphic quantum sets to avoid this nuisance.

If $\{R_\alpha\}_{\alpha \in A}$ is a family of binary relations, with $R_\alpha$ being a binary relation from $\X_\alpha$ to $\Y$, then $[R_\alpha: \alpha \in A]$ denotes the unique binary relation from $\biguplus_{\alpha \in A} \X_\alpha$ to $\Y$ that comes from the universal property of $\biguplus_{\alpha \in A} \X_\alpha$. Also, let $\bot_\X^\Y$ denote the minimum binary relation from $\X$ to $\Y$. We will often leave the subscript and the superscript implicit, simply writing $\bot$.

\begin{lemma}\label{lem:coproduct order}
Let $\{\X_\alpha\}_{\alpha\in A}$ and $\{\Y_\beta\}_{\beta\in B}$ be collections of quantum sets, and let $\X=\biguplus_{\alpha \in A} \X_{\alpha}$ and $\Y=\biguplus_{\beta\in B}\Y_\beta$ be their coproducts. Let $J_\alpha:\X_\alpha\to\X$ and $K_\beta:\Y_\beta\to\Y$ be the canonical injections. Then all of the following hold:
\begin{itemize}
    \item[(a)] For all $\alpha \in A$, $J_\alpha^\dag\circ J_\alpha=I_{\X_\alpha}$, and for all distinct $\alpha_1, \alpha_2 \in A$, $J_{\alpha_1}^\dag \circ J_{\alpha_2} = \bot$.
    \item[(b)] Let $R,S\in\qRel(\X,\Y)$. Then, the following are equivalent:
\begin{itemize}
    \item[(1)] $R\leq S$,
    \item[(2)] $R\circ J_\alpha\leq S\circ J_\alpha$ for all $\alpha\in A$,
    \item[(3)] $K_\beta^\dag\circ R\leq K_\beta^\dag\circ S$ for all $\beta\in B$, and
    \item[(4)] $K_\beta^\dag\circ R\circ J_\alpha\leq K_\beta^\dag\circ S\circ J_\alpha$ for all $\alpha\in A$ and $\beta\in B$.
\end{itemize}  
    \item[(c)] For each $\alpha\in A$ and $n \in \NN$, let $T_{\alpha,n} \in \qRel(\X_\alpha,\Y)$. Then,
    \[ \left[\bigwedge_{n\in\NN}T_{\alpha,n}:\alpha\in A\right]=\bigwedge_{n\in\NN}[T_{\alpha,n}:\alpha\in A].\]
    \item[(d)] Assume $A = B$. For each $\alpha\in A$ and $n \in \NN$, let $T_{\alpha,n} \in \qRel(\X_\alpha,\Y_\alpha)$. Then,
    \[\biguplus_{\alpha\in A}\left(\bigwedge_{n\in\NN}
T_{\alpha,n}\right)=\bigwedge_{n\in\NN}\left(\biguplus_{\alpha\in A} T_{\alpha,n}\right).\]
 
\end{itemize}
\end{lemma}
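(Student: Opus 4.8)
The plan is to reduce everything to the explicit description of the coproduct as the disjoint union of atoms, under which each injection $J_\alpha$ (resp.\ $K_\beta$) realizes the atoms of $\X_\alpha$ (resp.\ $\Y_\beta$) as a distinguished block of atoms of $\X$ (resp.\ $\Y$). With this picture part (a) is immediate: an atom of $\X_\alpha$ maps to the same atom of $\X$, so $J_\alpha^\dag\circ J_\alpha$ restricts to the identity on each atom of $\X_\alpha$, giving $J_\alpha^\dag\circ J_\alpha=I_{\X_\alpha}$, while for $\alpha_1\neq\alpha_2$ the two blocks are disjoint and $J_{\alpha_1}^\dag\circ J_{\alpha_2}=\bot$. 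I would record here the two companion facts that the injections are functions, so $J_\alpha\circ J_\alpha^\dag\leq I_\X$, and that they are jointly epic, $\bigvee_{\alpha\in A}J_\alpha\circ J_\alpha^\dag=I_\X$, since every atom of $\X$ lies in exactly one summand.

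For part (b) the implications from (1) are all instances of the monotonicity of composition. For the converse, suppose (2) holds, that is $R\circ J_\alpha\leq S\circ J_\alpha$ for all $\alpha$. Postcomposing with $J_\alpha^\dag$ and joining over $\alpha$ yields $R\circ\bigl(\bigvee_\alpha J_\alpha\circ J_\alpha^\dag\bigr)\leq S\circ\bigl(\bigvee_\alpha J_\alpha\circ J_\alpha^\dag\bigr)$, which reads $R\leq S$ because the injections are jointly epic. The argument that (3) implies (1) is dual, precomposing with $K_\beta$ and joining over $\beta$, and (4) lies between the two and is handled by combining them. Equivalently, each of (1)--(4) merely asserts $R(X,Y)\subseteq S(X,Y)$ for every atom $X$ of $\X$ and $Y$ of $\Y$, read off through the appropriate blocks.

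The heart of the matter, and the main obstacle, is parts (c) and (d): because $\qRel$ is enriched over supremum-preserving maps, composition distributes over joins but in general only \emph{sub}-distributes over meets, Lemma \ref{lem:intersection} supplying one inequality only. The needed equalities therefore hinge on a single fact, that composition with an injection commutes with meets. Concretely, I claim $\bigl(\bigwedge_n M_n\bigr)\circ J_\gamma=\bigwedge_n\bigl(M_n\circ J_\gamma\bigr)$. The inequality $\leq$ is Lemma \ref{lem:intersection}; for $\geq$, set $P=\bigwedge_n(M_n\circ J_\gamma)$ and note $P\circ J_\gamma^\dag\leq M_n\circ J_\gamma\circ J_\gamma^\dag\leq M_n$ for each $n$, using $J_\gamma\circ J_\gamma^\dag\leq I_\X$, so that $P\circ J_\gamma^\dag\leq\bigwedge_n M_n$; postcomposing with $J_\gamma$ and invoking $J_\gamma^\dag\circ J_\gamma=I_{\X_\gamma}$ from (a) gives $P\leq\bigl(\bigwedge_n M_n\bigr)\circ J_\gamma$. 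A dual sandwich using $K_\beta^\dag\circ K_\beta=I$ and $K_\beta\circ K_\beta^\dag\leq I$ shows that postcomposition with $K_\beta^\dag$ likewise commutes with meets. This argument is insensitive to the index set, so it in fact works for arbitrary meets and not only countable ones.

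With this lemma in hand, (c) and (d) follow from the equality criterion of part (b). For (c), both sides are relations out of the coproduct, so it suffices to compare them after precomposition with each $J_\gamma$: the left-hand side gives $\bigwedge_n T_{\gamma,n}$ by the defining property of the copairing, while the right-hand side gives $\bigwedge_n\bigl([T_{\alpha,n}:\alpha]\circ J_\gamma\bigr)=\bigwedge_n T_{\gamma,n}$ by the commutation lemma. For (d), I would apply criterion (b)(4) and compare after forming $K_\beta^\dag\circ(-)\circ J_\alpha$; recalling that the coproduct of morphisms is $\bigvee_\gamma K_\gamma\circ T_{\gamma}\circ J_\gamma^\dag$ and evaluating $K_\beta^\dag\circ K_\gamma$ and $J_\gamma^\dag\circ J_\alpha$ by part (a) (the identity when the indices agree and $\bot$ otherwise), both sides collapse to $\bigwedge_n T_{\alpha,n}$ when $\alpha=\beta$ and to $\bot$ when $\alpha\neq\beta$, the meet passing through $K_\beta^\dag$ and $J_\alpha$ by the commutation lemma. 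The only genuine subtlety throughout is the meet-versus-join asymmetry of composition, and the injection-commutation lemma is precisely what overcomes it.
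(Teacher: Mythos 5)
Your proposal is correct and follows essentially the same route as the paper: part (a) by direct inspection of atoms, part (b) by reducing to components (your join/jointly-epic argument for the converse is an equivalent repackaging of the paper's atom-by-atom check via Lemma \ref{lem:restriction and corestriction}), and parts (c) and (d) by testing against the injections using (b) together with the fact that meets commute with pre- and post-composition by functions. The ``injection-commutation lemma'' you prove by the sandwich argument is exactly the paper's Proposition \ref{prop:intersection} (specialized to the injections), which the paper simply cites at the corresponding step.
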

\begin{proof}
Without loss of generality, we can assume that the quantum sets $\X_\alpha$ are pairwise disjoint; hence, $J_\alpha=J_{\X_\alpha}^\X$ \cite{Kornell2}*{Definition 8.2}. Similarly, we can assume that the quantum sets $\Y_\beta$ are pairwise disjoint; hence, $K_\beta=K_{\Y_\beta}^\Y$. Then, (a) follows from a direct calculation.

For (b), it is clear that (1) implies (2) and (3). It is also clear both that (2) implies (4) and that (3) implies (4). So we only have to show that (4) implies (1). Assume (4), and fix $X\atomof\X$ and $Y\atomof\Y$. The Hilbert space $X$ is an atom of $\X_\alpha$ for some $\alpha \in A$, and similarly, the Hilbert space $Y$ is an atom of $\Y_\beta$ for some $\beta \in B$. Applying Lemma \ref{lem:restriction and corestriction}, we compute that
\[ R(X,Y)=(K_\beta^\dag\circ R\circ J_\alpha)(X,Y)\leq (K_\beta^\dag\circ S\circ J_\alpha)(X,Y)=S(X,Y).\]
We now vary $X \atomof \X$ and $Y \atomof \Y$ to conclude that $R \leq S$, i.e., to conclude (1).

For (c), we compute that for each $\alpha_0 \in A$,
 \[ \left[\bigwedge_{n\in\NN}T_{\alpha,n}:\alpha\in A\right]\circ J_{\alpha_0}=\bigwedge_{n\in\NN}T_{{\alpha_0},n}=\bigwedge_{n\in\NN}([T_{\alpha,n}:\alpha\in A]\circ J_{\alpha_0})=\left(\bigwedge_{n\in\NN}[T_{\alpha,n}:\alpha\in A]\right)\circ J_{\alpha_0},\]
 where we apply Proposition \ref{prop:intersection} for the last equality. Claim (c) now follows from (b).
 
 For (d), we compute that for each $\alpha_0 \in A$:
 
 \[\left(\biguplus_{\alpha\in A}\left(\bigwedge_{n\in\NN}
T_{\alpha,n}\right)\right)\circ J_{{\alpha_0}}=\bigwedge_{n\in\NN}T_{{\alpha_0},n}=\bigwedge_{n\in\NN}\left(\left(\biguplus_{\alpha\in A}T_{\alpha,n}\right)\circ J_{\alpha_0}\right)=\left(\bigwedge_{n\in\NN}\left(\biguplus_{\alpha\in A} T_{\alpha,n}\right)\right)\circ J_{\alpha_0},\]
 where we used Proposition \ref{prop:intersection} in the last equality. Claim (d) now follows from (b).
\end{proof}

\begin{proposition}\label{prop:coproducts in qPos}
Let $\{(\X_\alpha,R_\alpha)\}_{\alpha\in A}$ be a collection of quantum posets. Let $\X=\biguplus_{\alpha\in A}\X_\alpha$ be the coproduct of the $\X_\alpha$ in $\qSet$, and let $R=\biguplus_{\alpha\in A}R_\alpha$ be the coproduct of the $R_\alpha$ as morphisms in $\qRel$, i.e., the unique $R\in\qRel(\X,\X)$ such that, for each $\alpha\in A$,
\begin{equation}\label{eq:injections coproduct in qPos}
R\circ J_\alpha=J_\alpha\circ R_\alpha.    
\end{equation}
Then $(\X,R)$ is the coproduct of the $(\X_\alpha,R_\alpha)$ in $\qPOS$, and the canonical injections $J_\alpha:\X_\alpha\to\X$ are order embeddings.
\end{proposition}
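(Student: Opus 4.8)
The plan is to exploit the ``block-diagonal'' nature of the coproduct order together with Lemma \ref{lem:coproduct order}, which lets us verify every (in)equality of binary relations on $\X$ block-wise, i.e.\ after precomposition with the injections $J_\alpha$. The starting data are the defining equation~\eqref{eq:injections coproduct in qPos}, namely $R \circ J_\alpha = J_\alpha \circ R_\alpha$, together with part (a) of Lemma \ref{lem:coproduct order}, which supplies $J_\alpha^\dag \circ J_\alpha = I_{\X_\alpha}$ and $J_{\alpha_1}^\dag \circ J_{\alpha_2} = \bot$ for distinct $\alpha_1, \alpha_2$.

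First I would check that $R$ is a pre-order. For reflexivity, I precompose with $J_\alpha$: since $I_{\X_\alpha} \leq R_\alpha$ and composition is monotone, $I_\X \circ J_\alpha = J_\alpha \circ I_{\X_\alpha} \leq J_\alpha \circ R_\alpha = R \circ J_\alpha$, so Lemma \ref{lem:coproduct order}(b) gives $I_\X \leq R$. For transitivity, $(R \circ R) \circ J_\alpha = R \circ J_\alpha \circ R_\alpha = J_\alpha \circ (R_\alpha \circ R_\alpha) \leq J_\alpha \circ R_\alpha = R \circ J_\alpha$, and again Lemma \ref{lem:coproduct order}(b) yields $R \circ R \leq R$.

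Next comes antisymmetry, the crux of the argument. Here I would first record that $R^\dag = \biguplus_{\alpha \in A} R_\alpha^\dag$; this follows by pre- and post-composing $R^\dag$ with the injections and their adjoints and applying part (a), exactly as one verifies the defining property of $R$ itself. Then I would invoke the interchange of coproduct and meet from Lemma \ref{lem:coproduct order}(d) to compute $R \wedge R^\dag = \biguplus_{\alpha \in A} (R_\alpha \wedge R_\alpha^\dag) = \biguplus_{\alpha \in A} I_{\X_\alpha} = I_\X$, using that each $R_\alpha$ is an order and that the coproduct of the identities is the identity on $\X$. This establishes that $(\X, R)$ is a quantum poset. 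The main obstacle is precisely this step, but it is disarmed by Lemma \ref{lem:coproduct order}(d), keeping the argument at the level of block-wise bookkeeping.

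It remains to identify $(\X, R)$ as the coproduct in $\qPOS$. That the injections are order embeddings is immediate: $J_\alpha^\dag \circ R \circ J_\alpha = J_\alpha^\dag \circ J_\alpha \circ R_\alpha = R_\alpha$ by~\eqref{eq:injections coproduct in qPos} and part (a); in particular each $J_\alpha$ is monotone by Lemma \ref{lem:order embeddings are injective and monotone}, so the injections form a cone in $\qPOS$. For the universal property, given monotone maps $G_\alpha \colon (\X_\alpha, R_\alpha) \to (\Y, T)$ into a quantum poset, I would take the unique function $H \colon \X \to \Y$ with $H \circ J_\alpha = G_\alpha$ furnished by the coproduct in $\qSet$, and verify its monotonicity block-wise: $(H \circ R) \circ J_\alpha = H \circ J_\alpha \circ R_\alpha = G_\alpha \circ R_\alpha \leq T \circ G_\alpha = (T \circ H) \circ J_\alpha$ by monotonicity of each $G_\alpha$, whence $H \circ R \leq T \circ H$ by Lemma \ref{lem:coproduct order}(b). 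Uniqueness of $H$ as a monotone map is inherited from its uniqueness as a function, completing the verification of the coproduct.
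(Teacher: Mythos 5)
Your proof is correct and follows essentially the same block-wise strategy as the paper's: verify each axiom after composition with the injections and globalize via Lemma \ref{lem:coproduct order}, then check the order-embedding identity and the universal property exactly as the paper does. The only cosmetic difference is the antisymmetry step, where you identify $R^\dagger$ as $\biguplus_{\alpha\in A} R_\alpha^\dagger$ and invoke Lemma \ref{lem:coproduct order}(d), whereas the paper computes $J_\beta^\dagger\circ(R\wedge R^\dagger)\circ J_\alpha$ directly using Proposition \ref{prop:intersection} and Lemma \ref{lem:coproduct order}(b); both amount to the same bookkeeping.
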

\begin{proof}
For any binary relation $R$ from a quantum set $\X$ to a quantum set $\Y$, we write $\Delta_{\alpha, \beta}R$ to mean $R$ if $\alpha = \beta$ and $\bot_\X^\Y$ if $\alpha \neq \beta$. Formally, this notation can be understood in terms of scalar multiplication, which can be defined in any dagger compact category \cite{AbramskyCoecke}*{section 3.2}.

Fix $\alpha,\beta\in A$. By Lemma \ref{lem:coproduct order}, we have $J_\beta^\dag\circ J_\alpha=\Delta_{\alpha,\beta}I_{\X_\alpha}$; hence, using that $R_\alpha$ is an order, we compute that
\begin{align*}
J_\beta^\dag\circ I_\X\circ J_\alpha & =J_\beta^\dag\circ J_\alpha\circ I_{\X_\alpha}\leq J_\beta^\dag\circ J_\alpha\circ R_\alpha =J_\beta^\dag\circ R\circ J_\alpha,\\
J_\beta^\dag\circ R\circ R\circ J_\alpha & = J_\beta^\dag\circ R\circ J_\alpha\circ R_\alpha = J_\beta^\dag\circ J_\alpha\circ R_\alpha\circ R_\alpha\leq J_\beta^\dag\circ J_\alpha\circ R_\alpha=J_\beta^\dag\circ R\circ J_\alpha,\\
J_\beta^\dag\circ (R\wedge R^\dag)\circ J_\alpha & = (J_\beta^\dag\circ R\circ J_\alpha)\wedge(J_\beta^\dag\circ R^\dag \circ J_\alpha)= (J_\beta^\dag\circ R\circ J_\alpha)\wedge((R\circ J_\beta)^\dag\ \circ J_\alpha)\\
& = (J_\beta^\dag\circ J_\alpha\circ R_\alpha)\wedge ((J_\beta\circ R_\beta)^\dag\circ J_\alpha)=(J_\beta^\dag\circ J_\alpha\circ R_\alpha)\wedge (R_\beta^\dag\circ J_\beta^\dag\circ J_\alpha)\\
& = (\Delta_{\alpha,\beta} I_{\X_\alpha}\circ R_\alpha)\wedge(R_\beta^\dag\circ\Delta_{\alpha,\beta}I_{\X_\alpha})=\Delta_{\alpha,\beta}(R_\alpha\wedge R_\alpha^\dag)
\\ &
=\Delta_{\alpha,\beta}I_{\X_\alpha}=J_\beta^\dag\circ I_\X\circ J_\alpha, 
\end{align*}
We have used Proposition \ref{prop:intersection} for the calculation of $J_\beta^\dag\circ (R\wedge R^\dag)\circ J_\alpha$. Since these (in)equalities hold for each $\alpha,\beta\in A$, it follows from Lemma \ref{lem:coproduct order} that $I_\X\leq R$, $R\circ R\leq R$, and $R\wedge R^\dag=I_\X$, i.e., that $R$ is an order on $\X$. Now, we find from Equation (\ref{eq:injections coproduct in qPos}) and Lemma \ref{lem:coproduct order} that $J_\alpha^\dag\circ R\circ J_\alpha=J_\alpha^\dag\circ J_\alpha=I_{\X_\alpha}$;
hence, $J_\alpha$ is an order embedding. 

Let $(\Y,S)$ be a quantum poset, and let $F_\alpha:\X_\alpha\to\Y$ be a collection of monotone maps. We need to check that $[F_\alpha:\alpha\in A]:\X\to\Y$ is monotone, too. For each $\beta\in A$, we have $[F_\alpha:\alpha\in A]\circ R\circ J_\beta = [F_\alpha:\alpha\in A]\circ J_\beta\circ R_\beta=F_\beta\circ R_\beta\leq S\circ F_\beta=S\circ [F_\alpha:\alpha\in A]\circ J_\beta,$
where we use Equation (\ref{eq:injections coproduct in qPos}) in the first and the last equality, whereas the inequality follows by the monotonicity of $F_\beta$. Since the resulting inequality holds for each $\beta\in A$, it follows from Lemma \ref{lem:coproduct order} that $[F_\alpha:\alpha\in A]\circ R\leq S\circ [F_\alpha:\alpha\in A]$. Therefore, $[F_\alpha:\alpha\in A]$ is indeed monotone, which concludes the proof that $(\X,R)=\biguplus_{\alpha\in A}(\X_\alpha,R_\alpha)$ in $\qPOS$.
\end{proof}

\begin{proposition}\label{prop:coproduct is POS-enriched}
Let $\{\X_\alpha\}_{\alpha\in A}$ be a collection of quantum sets, let $\{\Y_\alpha,S_\alpha\}$ be a collection of quantum posets, let $\X=\biguplus_{\alpha\in A}\X_\alpha$ in $\qSet$, and let $(\Y,S)=\biguplus_{\alpha}(\Y_\alpha,S_\alpha)$ in $\qPOS$. For each $\alpha\in A$, let $F_\alpha$ and $G_\alpha$ be functions from $\X_\alpha$ to $\Y_\alpha$ such that $F_\alpha\sqsubseteq G_\alpha$. Let $F= \biguplus_{\alpha\in A}F_\alpha$ and $G= \biguplus_{\alpha\in A}G_\alpha$ be functions from $\X$ to $\Y$. Then, $F\sqsubseteq G$.
\end{proposition}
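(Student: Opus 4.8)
The plan is to reduce the desired relation $F \sqsubseteq G$ to the componentwise hypotheses $F_\alpha \sqsubseteq G_\alpha$ by means of the characterization of the coproduct order in Lemma \ref{lem:coproduct order}. By Lemma \ref{lem:order between functions}(1), the statement $F \sqsubseteq G$ is equivalent to the inequality $G \leq S \circ F$ of binary relations in $\qRel(\X,\Y)$. Since both $\X$ and $\Y$ are coproducts (with $A = B$), Lemma \ref{lem:coproduct order}(b) lets me establish this inequality by verifying, for all $\alpha,\beta \in A$, the localized inequality $K_\beta^\dag \circ G \circ J_\alpha \leq K_\beta^\dag \circ S \circ F \circ J_\alpha$, where $J_\alpha \colon \X_\alpha \to \X$ and $K_\beta \colon \Y_\beta \to \Y$ are the canonical injections.

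Next I would compute both sides of this localized inequality. For the left-hand side, I would use the defining property $G \circ J_\alpha = K_\alpha \circ G_\alpha$ of the coproduct function $G$ together with Lemma \ref{lem:coproduct order}(a), in the form $K_\beta^\dag \circ K_\alpha = \Delta_{\alpha,\beta} I_{\Y_\alpha}$, to collapse it to $\Delta_{\alpha,\beta}\,G_\alpha$. For the right-hand side, I would use $F \circ J_\alpha = K_\alpha \circ F_\alpha$ and the defining property $S \circ K_\alpha = K_\alpha \circ S_\alpha$ of the coproduct order $S$ from Equation (\ref{eq:injections coproduct in qPos}); applying $K_\beta^\dag \circ K_\alpha = \Delta_{\alpha,\beta} I_{\Y_\alpha}$ once more then collapses it to $\Delta_{\alpha,\beta}\,(S_\alpha \circ F_\alpha)$.

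With both sides in hand, the localized inequality reads $\Delta_{\alpha,\beta}\,G_\alpha \leq \Delta_{\alpha,\beta}\,(S_\alpha \circ F_\alpha)$. When $\alpha \neq \beta$, both sides are the zero relation and the inequality holds trivially; when $\alpha = \beta$, it reduces to $G_\alpha \leq S_\alpha \circ F_\alpha$, which is precisely the hypothesis $F_\alpha \sqsubseteq G_\alpha$ reexpressed via Lemma \ref{lem:order between functions}(1). Having checked the localized inequality for all $\alpha,\beta$, I would invoke Lemma \ref{lem:coproduct order}(b) to conclude $G \leq S \circ F$, that is, $F \sqsubseteq G$.

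I do not expect a genuine obstacle here: the argument is essentially bookkeeping once the correct reformulations are chosen. The only point demanding care is to keep the two distinct families of injections straight ($J_\alpha$ into $\X$ versus $K_\alpha$ into $\Y$) and to invoke the matching defining equations, $G \circ J_\alpha = K_\alpha \circ G_\alpha$ for the coproduct function and $S \circ K_\alpha = K_\alpha \circ S_\alpha$ for the coproduct order, so that the orthogonality relation $K_\beta^\dag \circ K_\alpha = \Delta_{\alpha,\beta} I_{\Y_\alpha}$ can be applied cleanly on both sides.
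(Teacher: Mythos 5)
Your proof is correct and follows essentially the same strategy as the paper: localize the inequality from Lemma \ref{lem:order between functions} to the coproduct components via Lemma \ref{lem:coproduct order}(b), apply the componentwise hypothesis, and reassemble. The only (immaterial) difference is that you verify condition (1) of Lemma \ref{lem:order between functions} using the two-sided localization $K_\beta^\dag \circ (-) \circ J_\alpha$, whereas the paper verifies condition (2), $S \circ G \leq S \circ F$, by precomposing with $J_\beta$ alone.
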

\begin{proof}
Since $F_\alpha\sqsubseteq G_\alpha$, we have $S_\alpha\circ G_\alpha\leq S_\alpha\circ F_\alpha$. For each $\beta\in A$, let $J_\beta:\X_\beta\to\X$ be the canonical injection. We find that
\begin{align*}
    S\circ G\circ J_\beta=\left(\biguplus_{\alpha\in A}S_\alpha\circ G_\alpha\right)\circ J_\beta=S_\beta\circ G_\beta\leq S_\beta\circ F_\beta =\left(\biguplus_{\alpha\in A}S_\alpha\circ F_\alpha\right)\circ J_\beta=S\circ F\circ J_\beta.
\end{align*}
Hence, it follows by Lemma \ref{lem:coproduct order} that $S\circ G\leq S\circ F$, i.e., $F\sqsubseteq G$.
\end{proof}

Next, we aim to show that $\qPOS$ is cocomplete. Since we have already shown that it has all coproducts, it is sufficient to show that is has coequalizers. But it is difficult to give an explicit description of coequalizers in $\cat{qSet}$, so we choose instead to follow a more abstract route: recall that a category is \emph{well-powered} if the subobjects of any object form a set, \emph{co-well powered} if the quotient objects of any object form a set, and \emph{extremally co-well powered} if those quotient objects that are represented by extremal epimorphisms form a set. Here, we recall that an epimorphism $E$ is said to be \emph{extremal} if $E=M\circ F$ for some monomorphism $M$ implies that $M$ is an isomorphism. For us, the importance of these concepts is that any category that is complete, well-powered and extremally co-well powered has all coequalizers. This theorem is originally proven in \cite[Theorem 5.11]{Nakagawa}, but also stated in \cite{AdamekHerrlichStrecker} as Exercise 12J.

\begin{theorem}\label{thm:qPOS is cocomplete}
The category $\qPOS$ is well-powered and extremally co-well powered. It is therefore cocomplete.
\end{theorem}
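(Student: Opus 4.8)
The plan is to invoke the theorem cited just above the statement---that a complete, wellpowered, and extremally co-wellpowered category has all coequalizers---and then to combine these coequalizers with the coproducts constructed in Proposition \ref{prop:coproducts in qPos}. Completeness of $\qPOS$ is already established in Theorem \ref{thm:qPos is complete}, so the work reduces to verifying the two smallness conditions. Once both are in hand, the cited theorem produces all coequalizers, and since a category with all coproducts and all coequalizers has all colimits, $\qPOS$ is cocomplete. Thus the body of the proof is the verification that $\qPOS$ is wellpowered and extremally co-wellpowered, and for each of these I would reduce to the corresponding fact about $\qSet$ together with the observation that the orders on a fixed quantum set form a set.

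For wellpoweredness, recall from Lemma \ref{lem:monomorphisms in qPOS} that the monomorphisms of $\qPOS$ are exactly the injective monotone functions. Given a monomorphism $M\:(\W,T)\to(\X,R)$, I would factor it through its range as in Definition \ref{def:range of a function}: the corestriction $\overline M\:\W\to\ran M$ is a bijection, so transporting $T$ along it (Lemma \ref{lem:pullback order}) yields an order $T'=\overline M\circ T\circ\overline M^\dag$ on the subset $\ran M\subseteq\X$ for which $\overline M$ is an order isomorphism, whence $M$ and $J_{\ran M}$ represent the same subobject. Therefore every subobject of $(\X,R)$ is represented by the inclusion of some subset of $\X$ carrying some order. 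The subsets of $\X$ form a set by \cite{Kornell2}*{Proposition 10.1}, and the orders on a fixed quantum set are among the elements of $\qRel(\W,\W)$, which is again a set; so the subobjects of $(\X,R)$ are the image of a set under a surjection, and $\qPOS$ is wellpowered.

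For extremal co-wellpoweredness, Lemma \ref{lem:extremal epi in qPOS} tells us that every extremal epimorphism of $\qPOS$ is surjective, so it suffices to bound, up to isomorphism of quotients, the surjective monotone functions out of $(\X,R)$. The crucial point is that $\qSet$ itself admits only a set of such quotients: a surjective function $E\:\X\to\Y$ corresponds to an injective unital normal $*$-homomorphism $\ell^\infty(\Y)\to\ell^\infty(\X)$ \cite{Kornell2}, and hence the underlying $\qSet$-quotient is determined up to isomorphism by a von Neumann subalgebra of the fixed algebra $\ell^\infty(\X)$. These subalgebras form a set, and for each resulting target $\Y$ the orders $S$ on $\Y$ again form a set. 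Consequently the quotient objects of $(\X,R)$ represented by extremal epimorphisms are the image of a set under a surjection, and $\qPOS$ is extremally co-wellpowered.

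I expect this last step to be the main obstacle, precisely because colimits---and hence quotients---in $\qSet$ are noted to be difficult to describe explicitly. The argument therefore deliberately avoids any such description and instead bounds the quotients abstractly by the set of von Neumann subalgebras of $\ell^\infty(\X)$, using only the duality between functions and $*$-homomorphisms. With both smallness conditions verified, the cited completeness theorem yields all coequalizers in $\qPOS$, and combining these with the coproducts of Proposition \ref{prop:coproducts in qPos} establishes that $\qPOS$ is cocomplete.
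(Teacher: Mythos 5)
Your proposal is correct and follows essentially the same route as the paper: both verify wellpoweredness and extremal co-wellpoweredness by reducing to the corresponding smallness of $\cat{qSet}$ (controlled by subalgebras of $\ell^\infty(\X)$) together with the observation that the orders on a fixed quantum set form a set, using Lemmas \ref{lem:monomorphisms in qPOS} and \ref{lem:extremal epi in qPOS}, and then invoke Nakagawa's theorem plus Proposition \ref{prop:coproducts in qPos}. The only cosmetic difference is that you transport the order along the concrete range factorization $M = J_{\ran M}\circ\overline M$ rather than along an abstractly chosen family of representatives, which amounts to the same argument.
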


\begin{proof}
The category $\cat{W}_1^*$ of von Neumann algebras and unital normal $*$-homomorphisms is well-powered and co-wellpowered: the subjects of a von Neumann algebra correspond to its unital ultraweakly closed $*$-subalgebras, and the quotient objects of a von Neumann algebra correspond to its ultraweakly closed two-sided ideals. Hence, the full subcategory $\cat{M}_1^*$ of hereditarily atomic von Neumann algebras is also well-powered and co-well powered. Therefore, $\cat{qSet}$ is also well-powered and co-well powered \cite{Kornell2}*{Theorem 7.4}.

Let $(\X,R)$ be a quantum poset, and choose representatives for the subobjects of $\X$. A subobject of $\X$ is an equivalence class of pairs $(\W, M)$, with $\W$ a quantum set and $M$ a monomorphism $\W \To \X$. Two such pairs, $(\W_1, M_1)$ and $(\W_2, M_2)$, are defined to be equivalent if they are isomorphic as objects in the slice category $\cat{qSet}/\X$, i.e., if there is an isomorphism $F\: \W_1 \To \W_2$ such that $M_1 = M_2 \circ F$. We choose a family of pairs $\{(\W_\alpha,M_\alpha)\}_{\alpha \in \mathrm{Sub}(\X)}$ to represent these equivalence classes.

Let $((\W, T), M)$ represent a subobject of $(\X, R)$ in $\cat{qSet}$. Thus, $(\W, T)$ is a poset, and $M$ is a monomorphism from $(\W, T)$ to $(\X,R)$. By Lemma \ref{lem:monomorphisms in qPOS}, $M$ is an injection, i.e., a monomorphism in $\cat{qSet}$. By our choice of the pairs $(\W_\alpha,M_\alpha)$, for $\alpha \in \mathrm{Sub}(\X)$, there exist a subobject $\alpha \in \mathrm{Sub}(\X)$ and an isomorphism $F\: \W_\alpha \to \W$ such that $M_\alpha = M \circ F$. Thus, $F$ is an isomorphism in $\cat{qPOS}$ from the quantum poset $(\W_\alpha, F^\dag \circ T \circ F)$ to the quantum poset $(\W,T)$ such that $M_\alpha = M \circ F$. The function $M$ is monotone by the definition of a subobject, and the function $F$ is monotone because it is an isomorphism, so $M_\alpha$ is also monotone. Therefore, we have show that $((\W, T), M)$ is isomorphic to a pair of the form $((\W_\alpha, T'), M_\alpha)$ for some $\alpha \in \mathrm{Sub}(\X)$ and some order $T'$ on $\omega_\alpha$. Since $\cat{qSet}$ is well-powered, $\mathrm{Sub}(\X)$ is a set, and furthermore, for each $\alpha \in \mathrm{Sub}(\X)$, the orders on $\W_\alpha$ form a set. We conclude that the subobjects of $((\W, T), M)$ in $\cat{qPOS}$ form a set, and more generally, that $\cat{qPOS}$ is well-founded.

The proof that $\cat{qPOS}$ is extremally co-well powered is entirely similar, replacing monomorphisms into an arbitrary quantum poset $((\W, T), M)$ with extremal epimorphisms out of an arbitrary quantum poset $((\W, T), M)$, and appealing to Lemma \ref{lem:extremal epi in qPOS} instead of Lemma \ref{lem:monomorphisms in qPOS}. Hence, the category $\cat{qPOS}$ is both well-powered and extremally co-well powered. By Theorem \ref{thm:qPos is complete}, it is complete, so by \cite[Theorem 5.11]{Nakagawa}, it has all coequalizers. It also has all coproducts by Proposition \ref{prop:coproducts in qPos}. Therefore, $\cat{qPOS}$ is cocomplete.
\end{proof}

\section{Monoidal product}
We show that $\cat{qPOS}$ is a symmetric monoidal category, with its monoidal structure inherited from $\cat{qSet}$. This monoidal structure on $\cat{qSet}$, and more generally on $\qRel$, is termed the Cartesian product, and it is notated $\times$, but it is not the category-theoretic product. This terminology and notation are justified by the fact that this monoidal structure generalizes the familiar Cartesian product of ordinary sets in a sense appropriate to the noncommutative dictionary.

Let $\X$ and $\Y$ be quantum sets. Their Cartesian product $\X\times\Y$ is the quantum set defined by $\At(\X \times \Y)=\{X\otimes Y:X\in\X,Y\in\Y\}$. Furthermore, if $R$ is some binary relation on $\X$ and $S$ is some binary relation on $\Y$, then their product $R \times S$ is the binary relation on $\X \times \Y$ defined by 
$ (R\times S)(X_1\otimes Y_1,X_2\otimes Y_2) = R(X_1,X_2)\otimes S(Y_1,Y_2),$
for $X_1, X_2 \atomof \X$ and $Y_1, Y_2 \atomof \Y$.

\begin{proposition}\label{prop:monoidal product of quantum posets}
Let $(\X,R)$ and $(\Y,S)$ be quantum posets. The, $(\X\times \Y,R\times S)$ is also a quantum poset.
\end{proposition}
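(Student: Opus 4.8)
The plan is to verify the three axioms of Definition \ref{def:quantum preorder and order} for $R \times S$ directly, reducing each to the corresponding axiom for $R$ and for $S$ via the compatibility of the Cartesian product with the identity, composition, and dagger of $\qRel$. Concretely, I will use the following structural identities, all of which hold atomwise and follow from the elementary facts that $(a \otimes b)(c \otimes d) = (ac) \otimes (bd)$ and $(a \otimes b)^\dag = a^\dag \otimes b^\dag$, together with the distributivity of $\otimes$ over sums of subspaces: first, $I_{\X \times \Y} = I_\X \times I_\Y$; second, the interchange law $(R \times S) \circ (R \times S) = (R \circ R) \times (S \circ S)$; third, $(R \times S)^\dag = R^\dag \times S^\dag$; and fourth, that $\times$ is monotone in each argument.

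Granting these, the first two axioms are immediate. For reflexivity, I compute $I_{\X \times \Y} = I_\X \times I_\Y \leq R \times S$, using $I_\X \leq R$, $I_\Y \leq S$, and monotonicity of $\times$. For transitivity, the interchange law gives $(R \times S) \circ (R \times S) = (R \circ R) \times (S \circ S) \leq R \times S$, again by monotonicity together with $R \circ R \leq R$ and $S \circ S \leq S$.

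The antisymmetry axiom is the main obstacle. Using the dagger identity, I rewrite $(R \times S) \wedge (R \times S)^\dag = (R \times S) \wedge (R^\dag \times S^\dag)$, and it remains to show that this equals $(R \wedge R^\dag) \times (S \wedge S^\dag)$, which is then $I_\X \times I_\Y = I_{\X \times \Y}$ by antisymmetry of $R$ and $S$. The meet of binary relations is computed atomwise as intersection of subspaces, so the needed identity is, for atoms $X_1, X_2 \atomof \X$ and $Y_1, Y_2 \atomof \Y$,
\[
\bigl(R(X_1,X_2) \otimes S(Y_1,Y_2)\bigr) \cap \bigl(R^\dag(X_1,X_2) \otimes S^\dag(Y_1,Y_2)\bigr) = \bigl(R(X_1,X_2) \cap R^\dag(X_1,X_2)\bigr) \otimes \bigl(S(Y_1,Y_2) \cap S^\dag(Y_1,Y_2)\bigr).
\]
This is an instance of the general fact that, for subspaces $A, C$ of a finite-dimensional space $U$ and subspaces $B, D$ of $V$, one has $(A \otimes B) \cap (C \otimes D) = (A \cap C) \otimes (B \cap D)$. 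The inclusion $\supseteq$ is clear; for $\subseteq$ I would argue by choosing a basis of $U$ that extends a basis of $A \cap C$ to bases of $A$ and of $C$ simultaneously, and likewise a basis of $V$ adapted to $B$, $D$, and $B \cap D$. The induced basis of $U \otimes V$ then has the property that each of $A \otimes B$ and $C \otimes D$ is the span of a distinguished set of basis tensors, so the intersection is the span of the common basis tensors, which is exactly $(A \cap C) \otimes (B \cap D)$.

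I expect this subspace intersection identity to be the only genuinely nontrivial point, since the reflexivity and transitivity axioms are purely formal consequences of the monoidal dagger structure inherited from $\qRel$. Should a lemma recording meet-tensor distributivity (or the subspace fact above) already be available from the development of $\qRel$, the entire argument collapses to the three short computations indicated.
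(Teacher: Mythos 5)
Your proposal is correct and follows essentially the same route as the paper: the paper dismisses the pre-order axioms as routine and reduces antisymmetry to the identity $(R \times S) \wedge (R^\dag \times S^\dag) = (R \wedge R^\dag) \times (S \wedge S^\dag)$, remarking only that this ``may be checked atom by atom.'' Your subspace computation $(A \otimes B) \cap (C \otimes D) = (A \cap C) \otimes (B \cap D)$, via a basis of each factor simultaneously adapted to the two subspaces and their intersection, is precisely the atomwise verification the paper omits.
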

\begin{proof}
Verifying that $R\times S$ is a quantum pre-order is routine. Furthermore, we compute that $(R \times S) \wedge (R \times S)^\dagger = (R \times S) \wedge (R^\dag \times S^\dag) = (R \wedge R^\dag) \times (S \wedge S^\dag) = I_\X \times I_\Y = I_{\X, \Y}$. The key second equality may be checked atom by atom. Thus, the pre-order $R\times S$ is in fact an order on $\X \times \Y$.
\end{proof}

\begin{lemma}\label{lem:tensor product of monotone maps is monotone}
Let $(\X_1,R_1),(\X_2,R_2),(\Y_1,S_1),(\Y_2,S_2)$ be quantum posets, and let $F:(\X_1,R_1)\to(\X_2,R_2)$ and $G:(\Y_1,S_1)\to(\Y_2,S_2)$ be monotone. Then \[F\times G:(\X_1\times \Y_1,R_1\times S_1)\to(\X_2\times \Y_2,R_2\times S_2)\] is monotone.
\end{lemma}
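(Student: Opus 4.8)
The plan is to reduce everything to two structural facts about the Cartesian product as the symmetric monoidal structure on $\qRel$: that $\times$ is a bifunctor, so that it obeys the interchange law $(A\times B)\circ(C\times D)=(A\circ C)\times(B\circ D)$ whenever the composites are defined, and that $\times$ is monotone with respect to the order $\leq$ on each hom-lattice, so that $A\leq A'$ and $B\leq B'$ imply $A\times B\leq A'\times B'$. Both are part of the structure of $\qRel$ recalled in the introduction, where it is described as a symmetric monoidal category enriched over modular orthomodular lattices. Since $\qSet$ is closed under $\times$, the relation $F\times G$ is again a function, so it remains only to verify the monotonicity inequality of Definition \ref{def:monotone map}(1).

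First I would rewrite the left-hand side using the interchange law:
\[ (F\times G)\circ(R_1\times S_1) = (F\circ R_1)\times(G\circ S_1). \]
Next, since $F$ and $G$ are monotone, Definition \ref{def:monotone map}(1) gives $F\circ R_1\leq R_2\circ F$ and $G\circ S_1\leq S_2\circ G$. Applying the monotonicity of $\times$ then yields
\[ (F\circ R_1)\times(G\circ S_1) \leq (R_2\circ F)\times(S_2\circ G). \]
Finally, a second application of the interchange law rewrites the right-hand side as
\[ (R_2\circ F)\times(S_2\circ G) = (R_2\times S_2)\circ(F\times G). \]
Chaining these three relations gives $(F\times G)\circ(R_1\times S_1)\leq(R_2\times S_2)\circ(F\times G)$, which is exactly the statement that $F\times G$ is monotone.

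I expect the only real subtlety to be bookkeeping with the coherence isomorphisms: strictly speaking the interchange law holds up to the canonical isomorphisms of the monoidal structure, and $R_1\times S_1$ lives on $\X_1\times\Y_1$ while $R_2\times S_2$ lives on $\X_2\times\Y_2$, so one must confirm that the identifications of $\At(\X_i\times\Y_i)$ with $\{X\otimes Y\}$ are respected. On atoms, however, each step is just the formula $(R\times S)(X_1\otimes Y_1,X_2\otimes Y_2)=R(X_1,X_2)\otimes S(Y_1,Y_2)$ together with the fact that composition and inclusion of subspaces commute with $\otimes$ of Hilbert-space morphisms. Hence the whole verification can alternatively be carried out atomwise, exactly in the style of the proof of Proposition \ref{prop:monoidal product of quantum posets}; this atomwise route sidesteps any appeal to abstract coherence and is the version I would write out in full if the interchange law has not been recorded explicitly for $\qRel$.
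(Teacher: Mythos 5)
Your proof is correct and is essentially identical to the paper's own one-line argument: interchange law, the monotonicity inequalities for $F$ and $G$ combined with the monotonicity of $\times$, and the interchange law again. The additional remarks about verifying the interchange law atomwise are sensible but not needed beyond what the paper already takes for granted about the monoidal structure of $\qRel$.
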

\begin{proof}
\[(F\times G)\circ (R_1\times S_1)=(F\circ R_1)\times (G\circ S_1)\leq (R_2\circ F)\times (S_2\circ G)=(R_2\times S_2)\circ (F\times G)\]
\end{proof}

\begin{lemma}\label{lem:projections are monotone}
Let $(\X,R)$ and $(\Y,S)$ be quantum pre-orders. Let $P:\X\times \Y\to\X$ and $Q:\X\times\Y\to\Y$ be the projection functions, in the sense of Appendix \ref{sub:projection functions}. Then 
\[  P\circ (R\times S)= R\circ P, \qquad Q\circ (R\times S)=Q\circ S,\]
hence $P$ and $Q$ are monotone. 
\end{lemma}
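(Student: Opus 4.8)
The plan is to reduce the two identities to a few basic properties of the projection functions recorded in Appendix~\ref{sub:projection functions}. For $P\:\X\times\Y\to\X$ I will use that $P$ is a \emph{function} (in particular single-valued, $P\circ P^\dag\leq I_\X$), that it is \emph{natural in its first factor}, i.e.\ $P\circ(A\times I_\Y)=A\circ P$ for every binary relation $A$ on $\X$ (checkable atom by atom), and the form of its kernel $P^\dag\circ P=I_\X\times\top_\Y$, where $\top_\Y$ denotes the maximum binary relation on $\Y$. Granting these, $P\circ(R\times S)=R\circ P$ will follow by splitting the product relation and absorbing its second factor, using only that $S$ is reflexive; transitivity of $R,S$ is not needed.

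Concretely, bifunctoriality of the Cartesian product gives the factorization $R\times S=(R\times I_\Y)\circ(I_\X\times S)$, so naturality of $P$ in its first factor yields
\[
P\circ(R\times S)=P\circ(R\times I_\Y)\circ(I_\X\times S)=R\circ P\circ(I_\X\times S).
\]
It then remains to prove the absorption identity $P\circ(I_\X\times S)=P$. The inequality $P\leq P\circ(I_\X\times S)$ comes from reflexivity of $S$: since $I_\Y\leq S$, monotonicity of $\times$ gives $I_{\X\times\Y}=I_\X\times I_\Y\leq I_\X\times S$, and composing on the left with $P$ yields $P=P\circ I_{\X\times\Y}\leq P\circ(I_\X\times S)$. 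For the reverse inequality I use single-valuedness together with the kernel: because $S\leq\top_\Y$ we have $I_\X\times S\leq I_\X\times\top_\Y=P^\dag\circ P$, whence
\[
P\circ(I_\X\times S)\leq P\circ P^\dag\circ P\leq I_\X\circ P=P.
\]
Substituting $P\circ(I_\X\times S)=P$ into the previous display gives $P\circ(R\times S)=R\circ P$.

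The identity $Q\circ(R\times S)=S\circ Q$ is obtained by the symmetric argument, factoring $R\times S=(I_\X\times S)\circ(R\times I_\Y)$ and using naturality of $Q$ in its second factor together with $Q^\dag\circ Q=\top_\X\times I_\Y$ and reflexivity of $R$. Monotonicity is then immediate from condition~(1) of Definition~\ref{def:monotone map}: since $P$ and $Q$ are functions, $P\circ(R\times S)=R\circ P\leq R\circ P$ shows $P$ is monotone as a map $(\X\times\Y,R\times S)\to(\X,R)$, and likewise $Q\circ(R\times S)=S\circ Q$ shows $Q$ is monotone. The main obstacle is the absorption step $P\circ(I_\X\times S)=P$: reflexivity of $S$ supplies only the inequality $\geq$, and the inequality $\leq$ genuinely requires that the projection be single-valued together with the exact form of its kernel $I_\X\times\top_\Y$ --- this is the one place where the structure of the projection (rather than a purely formal monoidal manipulation) is used.
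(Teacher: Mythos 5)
Your proof is correct, and it takes a more structural route than the paper, whose entire proof is the single sentence ``The proof is by straight-forward calculation, atom by atom,'' i.e.\ a direct componentwise verification of $P\circ(R\times S)=R\circ P$. You instead isolate two atom-by-atom facts about the projection --- naturality in its first factor, $P\circ(A\times I_\Y)=A\circ P$, and the kernel formula $P^\dag\circ P=I_\X\times\top_\Y$ --- and then derive the identity by purely formal monoidal reasoning (interchange, monotonicity of $\times$, single-valuedness). Both of your auxiliary facts do check out against the explicit formula $P(X\otimes Y,X)=\rho_X(\CC 1_X\otimes L(Y,\CC))$: for the kernel one uses $L(\CC,Y_2)\cdot L(Y_1,\CC)=L(Y_1,Y_2)$, and for naturality one uses naturality of the unitor $\rho$. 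What your decomposition buys is a precise accounting of hypotheses --- you correctly observe that only reflexivity of the \emph{other} factor is used, not transitivity, and that the one genuinely non-formal ingredient is the absorption $P\circ(I_\X\times S)=P$ --- whereas the paper's direct calculation buys brevity. Two small remarks: the displayed identity $Q\circ(R\times S)=Q\circ S$ in the lemma statement is evidently a typo for $S\circ Q$, which is what you prove; and for the final monotonicity claim the relevant instance of Definition \ref{def:monotone map}(1) is $P\circ(R\times S)\leq R\circ P$, which is immediate from your equality, so that step is fine even though your phrasing ``$R\circ P\leq R\circ P$'' slightly obscures which inequality is being invoked.
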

\begin{proof}
The proof is by direct calculation, atom by atom.
\end{proof}

\begin{proposition}\label{prop:M monotone iff PM and QM monotone}
Let $(\X,R)$, $(\Y,S)$ and $(\Z,T)$ be quantum posets, and let $M:\Z\to\X\times\Y$ be a function. Then $M$ is monotone if and only if $P\circ M$ and $Q\circ M$ are both monotone.
\end{proposition}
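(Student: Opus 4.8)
The plan is to prove both directions using the third equivalent characterization of monotonicity from Definition \ref{def:monotone map}, namely that a function $F\:(\Z,T)\to(\W,U)$ is monotone if and only if $T\leq F^\dag\circ U\circ F$. The forward direction is immediate: the projections $P$ and $Q$ are monotone by Lemma \ref{lem:projections are monotone}, so if $M$ is monotone, then $P\circ M$ and $Q\circ M$ are monotone by Lemma \ref{lem:composition of monotone functions is monotone}.

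For the converse, suppose $P\circ M$ and $Q\circ M$ are monotone. Applying condition (3) of Definition \ref{def:monotone map} to each gives
\[ T\leq (P\circ M)^\dag\circ R\circ(P\circ M)=M^\dag\circ P^\dag\circ R\circ P\circ M \]
and likewise $T\leq M^\dag\circ Q^\dag\circ S\circ Q\circ M$, whence
\[ T\leq (M^\dag\circ P^\dag\circ R\circ P\circ M)\wedge(M^\dag\circ Q^\dag\circ S\circ Q\circ M). \]
Since $M$ is a function, composing with $M$ on the right and $M^\dag$ on the left distributes over the meet by Proposition \ref{prop:intersection}, so the right-hand side equals $M^\dag\circ\big((P^\dag\circ R\circ P)\wedge(Q^\dag\circ S\circ Q)\big)\circ M$. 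Thus it suffices to prove the key identity
\[ (P^\dag\circ R\circ P)\wedge(Q^\dag\circ S\circ Q)=R\times S, \]
for then $T\leq M^\dag\circ(R\times S)\circ M$, which is exactly the monotonicity of $M$.

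To establish this identity I would first compute each term of the meet separately. Writing $\top_\X$ and $\top_\Y$ for the maximum relations on $\X$ and $\Y$, I expect $P^\dag\circ P=I_\X\times\top_\Y$ from the explicit form of the projection functions in Appendix \ref{sub:projection functions}; combining this with Lemma \ref{lem:projections are monotone} (which gives $R\circ P=P\circ(R\times S)$) and the bifunctoriality of $\times$ yields
\[ P^\dag\circ R\circ P=P^\dag\circ P\circ(R\times S)=(I_\X\times\top_\Y)\circ(R\times S)=R\times\top_\Y, \]
where I use $\top_\Y\circ S=\top_\Y$ (which holds since $I_\Y\leq S$). Symmetrically, $Q^\dag\circ S\circ Q=\top_\X\times S$, so the desired meet becomes $(R\times\top_\Y)\wedge(\top_\X\times S)$.

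The main obstacle is this last step: that the meet of two Cartesian products decomposes as $(R\times\top_\Y)\wedge(\top_\X\times S)=(R\wedge\top_\X)\times(\top_\Y\wedge S)=R\times S$. This must be checked atom by atom, where it reduces to the linear-algebra fact that for subspaces one has $(U_1\otimes V_1)\cap(U_2\otimes V_2)=(U_1\cap U_2)\otimes(V_1\cap V_2)$; establishing this tensor-intersection identity, together with the projection-function identity $P^\dag\circ P=I_\X\times\top_\Y$, is where the real work lies, the remaining manipulations being formal in $\qRel$.
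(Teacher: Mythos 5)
Your proof is correct, but it takes a genuinely different route from the paper's. The forward direction is identical. For the converse, the paper verifies condition (2) of Definition \ref{def:monotone map}: it applies Lemma \ref{lem:relations on tensor products} to the binary relation $M\circ T\circ M^\dag$ on $\X\times\Y$, obtaining $M\circ T\circ M^\dag\leq(P\circ M\circ T\circ M^\dag\circ P^\dag)\times(Q\circ M\circ T\circ M^\dag\circ Q^\dag)\leq R\times S$ in one line. You instead verify condition (3) and reduce everything to the identity $(P^\dag\circ R\circ P)\wedge(Q^\dag\circ S\circ Q)=R\times S$, i.e., to the statement that $R\times S$ is the largest binary relation on $\X\times\Y$ for which both projections are monotone. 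The two auxiliary facts you flag are indeed both true: $P^\dag\circ P=I_\X\times\top_\Y$ follows from the explicit form of the projections because $L(\CC,Y_2)\cdot L(Y_1,\CC)$ spans $L(Y_1,Y_2)$, and the tensor-intersection identity $(U_1\tensor V_1)\cap(U_2\tensor V_2)=(U_1\cap U_2)\tensor(V_1\cap V_2)$ holds for subspaces of a tensor product of finite-dimensional spaces (intersect first with $U_1\tensor V$ and $U_2\tensor V$ using a basis of $V$, then with $U\tensor(V_1\cap V_2)$ using a basis of $U$ adapted to $U_1\cap U_2$). Do note that this identity does \emph{not} follow from Lemma \ref{lem:product distributes over joins}(b), which when specialized to $R_1=R$, $R_2=\top_\X$, $S_1=\top_\Y$, $S_2=S$ only yields the trivial inequality $R\times S\leq(R\times\top_\Y)\wedge(\top_\X\times S)$; however, the special case you actually need, with one full matrix space as a factor on each side, is exactly the fact invoked without proof at the end of the paper's proof of Lemma \ref{lem:subspaces of tensor product}, so the underlying linear algebra is essentially shared between the two arguments. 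The paper's route buys economy, since Lemma \ref{lem:relations on tensor products} is needed anyway for Proposition \ref{prop:order between product maps}, whereas your route isolates a clean, reusable characterization of the product order as the meet of the two pullback orders along the projections; as written, though, your argument is a proof outline rather than a proof, since the two auxiliary facts still require the atom-by-atom verifications you describe.
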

\begin{proof}
Assume $M$ is monotone. By Lemma \ref{lem:projections are monotone}, both $P$ and $Q$ are monotone. It now follows from Lemma \ref{lem:composition of monotone functions is monotone} that $P\circ M$ and $Q\circ M$ are both monotone.
For the converse, assume that $P\circ M:\Z\to\X$ and $Q\circ M:\Z\to\Y$ are both monotone. By Definition \ref{def:monotone map} this means that
\begin{align*}
    P\circ M\circ T\circ M^\dag\circ P^\dag \leq R,\qquad 
        Q\circ M\circ T\circ M^\dag\circ Q^\dag \leq S.
\end{align*}
Applying Lemma \ref{lem:relations on tensor products} to the relation $M\circ T\circ M^\dag$ on $\X\times\Y$ yields
\[M\circ T\circ M^\dag\leq (P\circ M\circ T\circ M^\dag\circ P^\dag)\times(Q\circ M\circ T\circ M^\dag\circ Q^\dag)\leq R\times S,\]
which expresses that $M$ is indeed monotone.
\end{proof}

Recall $\mathbf 1$ denotes the quantum set whose only atom is $\CC$.

\begin{theorem}\label{thm:qPOS is monoidal}
$(\qPOS,\times,(\mathbf 1,I_{\mathbf 1}))$ is a symmetric monoidal category.
\end{theorem}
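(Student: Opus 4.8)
The plan is to transport the symmetric monoidal structure that $\qSet$ already carries \cite{Kornell2} up to $\qPOS$, using the faithful forgetful functor $U\:\qPOS\to\qSet$ that sends $(\X,R)$ to $\X$ and acts as the identity on underlying functions. Because $U$ is faithful and commutes with $\times$ by construction, most of the axioms will follow automatically once the structural data is shown to live in $\qPOS$. First I would confirm that $\times$ is a bifunctor on $\qPOS$: Proposition \ref{prop:monoidal product of quantum posets} gives that it sends a pair of quantum posets to a quantum poset, and Lemma \ref{lem:tensor product of monotone maps is monotone} gives that it sends a pair of monotone functions to a monotone function, while preservation of identities and composition is inherited from $\qSet$ through the faithfulness of $U$.

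The central step is to promote the associator $a$, the unitors $l$ and $r$, and the symmetry $s$ of $\qSet$ to order isomorphisms in $\qPOS$. Each of these is already a bijection, so by Proposition \ref{prop:order iso is surjective order embedding} it suffices to verify a single intertwining equation relating the source and target product orders, namely
\[
a\circ\big((R\times S)\times T\big)=\big(R\times(S\times T)\big)\circ a,
\]
and likewise $l\circ(I_{\mathbf 1}\times R)=R\circ l$, $r\circ(R\times I_{\mathbf 1})=R\circ r$, and $s\circ(R\times S)=(S\times R)\circ s$. I would check each of these atom by atom: the product order is given on atoms by the tensor product of the component subspaces, and the structural maps of $\qSet$ act on atoms by the canonical associativity, unit, and swap isomorphisms of Hilbert spaces, so each equation collapses to the corresponding identity for the tensor product of operator spaces, which holds strictly. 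This shows that $a,l,r,s$ and their inverses are monotone, hence order isomorphisms.

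Finally, the naturality squares for $a,l,r,s$ together with the pentagon, triangle, and hexagon coherence conditions are all equations between monotone functions; since $U$ carries each of them to the corresponding equation in $\qSet$, where it holds, and $U$ is faithful, each holds in $\qPOS$ as well. The only genuine work is the compatibility of the product order with the coherence isomorphisms, and I expect this atom-by-atom verification --- establishing that each coherence map is an order isomorphism rather than merely a bijection --- to be the main, if routine, obstacle.
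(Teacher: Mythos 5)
Your proposal is correct and follows essentially the same route as the paper: establish the bifunctor via Proposition \ref{prop:monoidal product of quantum posets} and Lemma \ref{lem:tensor product of monotone maps is monotone}, inherit the structural isomorphisms from $\qSet$, and check that the associator, unitors and braiding are order isomorphisms (the paper leaves this last step as a routine verification, which you correctly identify as an application of Proposition \ref{prop:order iso is surjective order embedding}(c) checked atom by atom). Your additional remark that coherence and naturality transfer along the faithful forgetful functor is exactly the implicit justification the paper relies on.
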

\begin{proof}
By Proposition \ref{prop:monoidal product of quantum posets} and Lemma \ref{lem:tensor product of monotone maps is monotone}, the monoidal product $\times$ on $\qSet$ induces a bifunctor $\times$ on $\qPOS$. The monoidal structure of $\qSet$ is inherited from the monoidal structure on $\qRel$ \cite{Kornell2}*{Theorem 3.6}. The statement follows now from verifying that the associator, the unitors and the braiding are order isomorphisms, which is routine.
\end{proof}

The monoidal product $\times$ on $\cat{qSet}$ is not the category-theoretic product. It generalizes the familiar Cartesian product of ordinary sets in another sense that is appropriate to quantum physics and noncommutative geometry. For example, the quantum set $`\RR \times `\RR$ models pairs of real numbers, which cannot be the product of $`\RR$ with itself in the category of quantum sets and functions because functions into $`\RR$ model observables. Most pairs of observables are not compatible, so most pairs of functions into $`\RR$ should not correspond to a function into $`\RR \times `\RR$. For functions $F$ and $G$ from a quantum set $\Z$ to $`\RR$, we write $(F,G)$ for the corresponding function from $\Z$ to $`\RR \times `\RR$, \emph{if such a function exists}. More generally, for each function $F\: \Z \To \X$ and each function $G\: \Z \To \Y$, we write $(F, G)$ for the unique function $\Z \To \X \times \Y$ such that $P \circ (F, G) = F$ and $Q \circ (F,G) = G$ \emph{if such a function exists}, where $P\: \X \times \Y \to \X$ and $Q\: \X \times \Y \to \Y$ are the canonical projection functions \cite{Kornell2}*{Sections I.B and X}. In this case, we say that $F$ and $G$ are \emph{compatible}.

\begin{proposition}\label{prop:product maps are order embeddings}
Let $(\X,R)$ and $(\Y,S)$ be quantum posets, and let $\Z$ be a quantum set. Let $F_1,F_2:\Z\to\X$ and $G_1,G_2:\Z\to\Y$ be functions. If $F_1$ is compatible with $G_1$ and $F_2$ is compatible with $G_2$, then
\[ (F_1,G_1)\sqsubseteq (F_2,G_2) \text{ if and only if both }F_1\sqsubseteq F_2\text{ and }G_2\sqsubseteq G_2.\]
\end{proposition}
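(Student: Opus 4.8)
The plan is to reduce everything to inequalities between binary relations via the equivalent characterizations of the order $\sqsubseteq$ collected in Lemma \ref{lem:order between functions}, and then to shuttle these inequalities between the product $\X \times \Y$ and its factors using the projection functions $P\: \X \times \Y \to \X$ and $Q\: \X \times \Y \to \Y$. Throughout I would use, without further comment, that composition in $\qRel$ is monotone in each argument and that the Cartesian product $\times$ of relations is monotone in each argument; both are immediate from the relevant definitions. (Here the right-hand condition should of course read both $F_1 \sqsubseteq F_2$ and $G_1 \sqsubseteq G_2$.)

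For the forward implication I would take condition (1) of Lemma \ref{lem:order between functions} as the working form of $\sqsubseteq$, so that $(F_1,G_1) \sqsubseteq (F_2,G_2)$ reads $(F_2,G_2) \leq (R \times S) \circ (F_1,G_1)$. Composing on the left with $P$ and invoking monotonicity of composition gives $P \circ (F_2,G_2) \leq P \circ (R \times S) \circ (F_1,G_1)$. The left-hand side is $F_2$ by the defining property of the product function, and by Lemma \ref{lem:projections are monotone} the right-hand side is $R \circ P \circ (F_1,G_1) = R \circ F_1$. Thus $F_2 \leq R \circ F_1$, which is precisely $F_1 \sqsubseteq F_2$; the identical computation with $Q$ in place of $P$ yields $G_1 \sqsubseteq G_2$.

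For the converse I would switch to condition (4) of Lemma \ref{lem:order between functions}, which is far better suited to the product. Set $W := (F_2,G_2) \circ (F_1,G_1)^\dag$, a relation on $\X \times \Y$, and apply Lemma \ref{lem:relations on tensor products} to obtain
\[ W \leq (P \circ W \circ P^\dag) \times (Q \circ W \circ Q^\dag). \]
Using $P \circ (F_2,G_2) = F_2$ together with $(F_1,G_1)^\dag \circ P^\dag = (P \circ (F_1,G_1))^\dag = F_1^\dag$, the first factor collapses to $P \circ W \circ P^\dag = F_2 \circ F_1^\dag$, and symmetrically the second is $G_2 \circ G_1^\dag$. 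The hypotheses $F_1 \sqsubseteq F_2$ and $G_1 \sqsubseteq G_2$ translate, again by condition (4), into $F_2 \circ F_1^\dag \leq R$ and $G_2 \circ G_1^\dag \leq S$, so monotonicity of $\times$ gives $W \leq (F_2 \circ F_1^\dag) \times (G_2 \circ G_1^\dag) \leq R \times S$. By condition (4) this says exactly $(F_1,G_1) \sqsubseteq (F_2,G_2)$.

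The main obstacle is the converse direction: one must recover the single inequality $W \leq R \times S$ on the product from the two componentwise inequalities, and a product relation is in general strictly larger than the relation reconstructed from its two projections. Lemma \ref{lem:relations on tensor products} is precisely the tool that closes this gap, and once it is in hand the forward direction is its routine shadow obtained by post-composing with the projections.
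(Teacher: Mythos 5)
Your proof is correct and follows essentially the same route as the paper: the forward direction post-composes with the monotone projections $P$ and $Q$, and the converse is exactly the paper's argument with Proposition \ref{prop:order between product maps} inlined, since that proposition is itself proved by applying Lemma \ref{lem:relations on tensor products} to $(F_2,G_2)\circ(F_1,G_1)^\dag$ as you do. You also correctly spotted that the statement's ``$G_2\sqsubseteq G_2$'' is a typo for $G_1\sqsubseteq G_2$.
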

\begin{proof}
Assume $(F_1,G_1)\sqsubseteq (F_2,G_2)$. Since $P$ and $Q$ are monotone by Lemma \ref{lem:projections are monotone}, it follows that
$F_1=P\circ (F_1,G_1)\sqsubseteq P\circ (F_2,G_2)=F_2$ and $G_1=Q\circ (F_1,G_1)\sqsubseteq Q\circ (F_2,G_2)=G_2.$
Now assume that $F_1\sqsubseteq F_2$ and $G_1\sqsubseteq G_2$. This means that $F_2\circ F_1^\dag\leq R$ and $G_2\circ G_1^\dag\leq S$; hence, by Proposition \ref{prop:order between product maps} it follows that $(F_2,G_2)\leq ((F_2\circ F_1^\dag)\times (G_2\circ G_1^\dag))\circ (F_1,G_1)\leq (R\times S)\circ(F_1,G_1),$
which expresses that $(F_1,G_1)\sqsubseteq (F_2,G_2).$
\end{proof}

\begin{corollary}\label{cor:tensor product is an order embedding}
Let $(\X,R)$ and $(\Y,S)$ be quantum posets, and let $\V,\W$ be quantum sets. Let $F_1,F_2:\V\to\X$ and $G_1,G_2:\W\to\Y$ be functions. Then $F_1\times G_1 \sqsubseteq F_2\times G_2$  if and only if $F_1\sqsubseteq F_2$ and  $G_1\sqsubseteq G_2$. 
\end{corollary}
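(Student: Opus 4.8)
The plan is to reduce this to the common-domain situation already settled in Proposition \ref{prop:product maps are order embeddings}, by realizing each product map as a pairing built from projections. Write $P\:\V\times\W\to\V$ and $Q\:\V\times\W\to\W$ for the projection functions, and $P_\X\:\X\times\Y\to\X$, $Q_\Y\:\X\times\Y\to\Y$ for those of $\X\times\Y$. Naturality of the projections gives the interchange identities $P_\X\circ(F_i\times G_i)=F_i\circ P$ and $Q_\Y\circ(F_i\times G_i)=G_i\circ Q$, so by uniqueness of pairings $F_i\times G_i=(F_i\circ P,\,G_i\circ Q)$; in particular $F_i\circ P$ and $G_i\circ Q$ are compatible. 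First I would apply Proposition \ref{prop:product maps are order embeddings} with $\Z=\V\times\W$ to obtain that $F_1\times G_1\sqsubseteq F_2\times G_2$ if and only if both $F_1\circ P\sqsubseteq F_2\circ P$ and $G_1\circ Q\sqsubseteq G_2\circ Q$.

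It then suffices to prove that $F_1\circ P\sqsubseteq F_2\circ P$ if and only if $F_1\sqsubseteq F_2$, together with the symmetric statement for $G$ and $Q$. The implication $F_1\sqsubseteq F_2\Rightarrow F_1\circ P\sqsubseteq F_2\circ P$ is immediate from Lemma \ref{lem:right multiplication is monotone}. For the converse I would expand condition (4) of Lemma \ref{lem:order between functions}: the relation $F_1\circ P\sqsubseteq F_2\circ P$ unfolds to $F_2\circ(P\circ P^\dag)\circ F_1^\dag\leq R$. Because $\W$ is inhabited, $P$ is a surjection and hence $P\circ P^\dag=I_\V$, so monotonicity of composition gives $F_2\circ F_1^\dag=F_2\circ I_\V\circ F_1^\dag\leq F_2\circ(P\circ P^\dag)\circ F_1^\dag\leq R$, which is exactly $F_1\sqsubseteq F_2$. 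The argument for $G$ and $Q$ is identical, and combining the two equivalences with the first paragraph yields the corollary.

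The hard part will be the converse direction, i.e.\ the right-cancellation of the projection, which hinges on the identity $P\circ P^\dag=I_\V$ (surjectivity of $P$). This holds precisely when the complementary factor is inhabited, so the two structural facts I would need to extract from the treatment of projection functions in Appendix \ref{sub:projection functions} are this surjectivity and the interchange identity $F_i\times G_i=(F_i\circ P,\,G_i\circ Q)$; the degenerate case in which a factor has no atoms should be dispatched separately. Everything else is a purely formal manipulation of the order characterizations from Section \ref{sec:quantum poset order enrichment}.
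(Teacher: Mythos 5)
Your proposal is correct and follows essentially the same route as the paper: both reduce to Proposition \ref{prop:product maps are order embeddings} via the identity $F_i\times G_i=(F_i\circ P,\,G_i\circ Q)$ and then prove $F_1\circ P\sqsubseteq F_2\circ P\Leftrightarrow F_1\sqsubseteq F_2$ using Lemma \ref{lem:right multiplication is monotone} in one direction and $P\circ P^\dag=I_\V$ (surjectivity of $P$, Lemma \ref{lem:projection maps are surjective}) in the other. Your remark about the degenerate case of an empty factor is a point of care the paper's own proof does not address, since Lemma \ref{lem:projection maps are surjective} assumes nonemptiness.
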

\begin{proof}
Consider the following diagram for $i=1,2$: \[
\begin{tikzcd}
\V\arrow{d}[swap]{F_i} &
\V\times\W \arrow{l}[swap]{P} \arrow{r}{P}
\arrow{d}{F_i\times G_i}
& \W\arrow{d}{G_i}
\\
\X
&
\X \times \Y \arrow{l}{P} \arrow{r}[swap]{Q}
&
\Y.
\end{tikzcd}\]
We first show that $F_1\sqsubseteq F_2$ is equivalent to $F_1\circ P\sqsubseteq F_2\circ P$. If $F_1\sqsubseteq F_2$, then $F_1\circ P\sqsubseteq F_2\circ P$ by Lemma \ref{lem:right multiplication is monotone}. Conversely, if $F_1\circ P\sqsubseteq F_2\circ P$, then $ F_2\circ P\circ (F_1\circ P)^\dag\leq R$;
hence $F_2\circ F_1^\dag=F_2\circ P\circ P^\dag\circ F_1^\dag =F_2\circ P\circ (F_1\circ P)^\dag\leq R,$
where we appeal to the surjectivity of $P$ (cf. Lemma \ref{lem:projection maps are surjective}) in the third equality. Thus $F_1\sqsubseteq F_2$. In a similar way we find that $G_1\sqsubseteq G_2$ if and only if $G_1\circ Q\sqsubseteq G_2\circ Q$.

Since for each $i=1,2$, we have $F_i\times G_i=(F_i\circ P,G_i\circ Q),$
it follows from Proposition \ref{prop:product maps are order embeddings} that $F_1\times G_1\sqsubseteq F_2\times G_2$ if and only if $F_1\circ P\sqsubseteq F_2\circ P$ and $G_1\circ Q\sqsubseteq G_2\circ Q$.
\end{proof}

\section{Monoidal closure}

We show that the category $\cat{qPOS}$ is monoidal closed. Let $(\Y,S)$ and $(\Z,T)$ be quantum posets. Intuitively, we construct the hom quantum poset $[(\Y,S), (\Z,T)]_\below$ by taking the largest subset of the quantum function set $\Z^\Y$ that consists of monotone functions and ordering them pointwise.

\begin{definition}
Let $(\X,R)$ and $(\Y,S)$ be quantum sets equipped with binary relations. Then we say that a function $F:\X\to\Y$ is a \emph{homomorphism} $(\X,R)\to(\Y,S)$ if $F\circ R\leq S\circ F$.
\end{definition}
\noindent Note that if $R$ and $S$ are orders on $\X$ and $\Y$, respectively, then $F$ is a homomorphism if and only if $F$ is monotone.

For the next lemma, recall that $\qSet$ is monoidal closed \cite{Kornell2}*{Theorem 9.1}, so for each pair of quantum sets $\Y$ and $\Z$, there is an exponential object $\Z^\Y$, and the monoidal product $\times$ has a right adjoint. Thus, $\times$ preserves colimits, in particular the monoidal product distributes over coproducts; this fact will be used in the proof of the next lemma. For each subset $\W \subseteq \Z^\Y$, we write $J_\W\: \W \hookrightarrow \Z^\Y$ for the canonical inclusion.

\begin{lemma}\label{lem:monoidal closure lemma}
Let $(\Y,S)$ and $(\Z,T)$ be quantum sets equipped with binary relations. Then, there exists a quantum set $\W \subsetof \Z^\Y$ that is the largest subset of $\Z^\Y$ such that
\[\Eval\circ (J_\W\times I_\Y):(\W\times \Y,I_\W\times S)\to(\Z,T)\]
is a homomorphism, and there exists a reflexive binary relation $Q$ on $\W$ that is the largest binary relation on $\W$ such that
\[\Eval\circ (J_\W\times I_\Y):(\W\times \Y,Q\times S)\to(\Z,T)\]
is a homomorphism. Moreover, $\W$ and $Q$ satisfy and are uniquely determined by the following properties:
\begin{enumerate}
\item The function $\mathrm{Eval} \circ (J_\W \times I_\Y)$ is a homomorphism $(\W \times \Y, Q \times S) \To (\Z, T)$.
\item For every quantum set $\X$ equipped with a reflexive binary relation $R$, and every homomorphism $F\: (\X \times \Y, R \times S) \To (\Z, T)$, there exists a unique homomorphism $G\: (\X,R) \To (\W, Q)$ such that $F = \mathrm{Eval} \circ (J_\W \times I_\Y) \circ (G \times I_\Y)$:
$$
\begin{tikzcd}
(\X \times \Y, R \times S)
\arrow{rrrd}{F}
\arrow[dotted]{d}[swap]{G \times I_\Y}
&
&
&
\\
(\W \times \Y, Q \times S)
\arrow{rrr}[swap]{\mathrm{Eval} \circ (J_\W \times I_\Y)}
&
&
&
(\Z, T)
\end{tikzcd}
$$
\end{enumerate}
\end{lemma}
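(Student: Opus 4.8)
The plan is to construct $\W$ and $Q$ by maximality and then check the two listed properties, the universal property being the substantive part. I would first produce the largest subset $\W$ by observing that the defining condition is \emph{atom-local}. The requirement that $\Eval\circ(J_\W\times I_\Y)$ be a homomorphism for the trivial order reads $\Eval\circ(J_\W\times I_\Y)\circ(I_\W\times S)\leq T\circ\Eval\circ(J_\W\times I_\Y)$. Writing $\W\times\Y=\biguplus_{W\atomof\W}(\{W\}\times\Y)$ and precomposing with the induced injections, Lemma \ref{lem:coproduct order}(b) shows this inequality holds if and only if its restriction to each $\{W\}\times\Y$ holds, and that restricted condition depends only on the atom $W$ and not on the ambient subset. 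Hence I would define $\W$ to consist of exactly those atoms $W\atomof\Z^\Y$ for which the singleton $\{W\}$ already satisfies the condition; this $\W$ is valid and, since every valid subset has only valid atoms, it is the largest valid subset.

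Next I would produce the largest reflexive $Q$. Fixing $E=\Eval\circ(J_\W\times I_\Y)$, the relevant condition on $Q$ is $E\circ(Q\times S)\leq T\circ E$. The assignment $Q\mapsto E\circ(Q\times S)$ preserves arbitrary joins, because $-\times S$ distributes over joins of subspaces atom by atom and composition preserves suprema (the enrichment of $\qRel$ over sup-lattices). Consequently the relations satisfying the condition are closed under arbitrary joins, as are the reflexive relations; and $I_\W$ itself satisfies the condition because $\W$ is valid. Therefore the join $Q$ of all valid reflexive relations is again valid and reflexive, hence the largest such, and property (1) holds by construction.

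For property (2), given a reflexive $R$ on $\X$ and a homomorphism $F\:(\X\times\Y,R\times S)\to(\Z,T)$, I would take the unique transpose $G_0\:\X\to\Z^\Y$ with $F=\Eval\circ(G_0\times I_\Y)$ supplied by monoidal closure of $\qSet$, and factor it as $G_0=J_{\ran G_0}\circ\overline{G_0}$. Since $I_\X\leq R$, the map $F$ is also a homomorphism for $I_\X\times S$. Writing $F=E'\circ(\overline{G_0}\times I_\Y)$ with $E'=\Eval\circ(J_{\ran G_0}\times I_\Y)$, rewriting $(\overline{G_0}\times I_\Y)\circ(I_\X\times S)=(I_{\ran G_0}\times S)\circ(\overline{G_0}\times I_\Y)$, and cancelling the surjection $\overline{G_0}\times I_\Y$ on the right via $(\overline{G_0}\times I_\Y)\circ(\overline{G_0}\times I_\Y)^\dag=\text{id}$, one obtains that $E'$ is a homomorphism for the trivial order. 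By maximality of $\W$ this forces $\ran G_0\subseteq\W$, so $G_0=J_\W\circ G$ for a unique function $G\:\X\to\W$.

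The crux is to show $G\:(\X,R)\to(\W,Q)$ is a homomorphism, which I would deduce from the maximality of $Q$. By the interchange law, $(G\circ R\circ G^\dag)\times S=(G\times I_\Y)\circ(R\times S)\circ(G^\dag\times I_\Y)$, so $E\circ((G\circ R\circ G^\dag)\times S)=F\circ(R\times S)\circ(G^\dag\times I_\Y)\leq T\circ F\circ(G^\dag\times I_\Y)=T\circ E\circ((G\circ G^\dag)\times I_\Y)\leq T\circ E$, the last step using $G\circ G^\dag\leq I_\W$. Thus $G\circ R\circ G^\dag$ satisfies the condition defining $Q$, so $Q\vee(G\circ R\circ G^\dag)$ is valid and reflexive and hence equals $Q$ by maximality, giving $G\circ R\circ G^\dag\leq Q$. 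Composing on the right with $G$ and using $G^\dag\circ G\geq I_\X$ then yields $G\circ R\leq G\circ R\circ G^\dag\circ G\leq Q\circ G$, so $G$ is a homomorphism. Uniqueness of $G$ follows from uniqueness of the transpose together with injectivity of $J_\W$, and uniqueness of the pair $(\W,Q)$ is the standard consequence of the universal property (2). I expect the homomorphism step for $G$ to be the main obstacle: it is precisely here that the definition of $Q$ as a \emph{largest} admissible relation is exploited, by manufacturing the auxiliary relation $G\circ R\circ G^\dag$, verifying its admissibility, and transporting the resulting inequality back through the defining identities of the function $G$.
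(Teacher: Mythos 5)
Your proposal is correct and follows essentially the same route as the paper: both obtain $\W$ as the largest member of a family of valid subsets (your atom-local characterization is equivalent to the paper's observation that validity is closed under subsets and disjoint unions), both obtain $Q$ as a join using that composition and $-\times S$ preserve joins, and both prove the universal property by factoring the transpose $G_0$ through its range, using surjectivity of $\overline{G_0}$ to place $\ran G_0$ inside $\W$, and then exploiting maximality of $Q$ via the auxiliary relation $G\circ R\circ G^\dagger$ exactly as you describe. The only point you gloss over is that the universal property yields uniqueness of $(\W,Q)$ a priori only up to canonical isomorphism, and the paper spends an extra paragraph upgrading this to on-the-nose equality of subsets of $\Z^\Y$ (via the injectivity of the inclusions and Proposition 10.1 of the quantum sets paper); this is routine but should be said.
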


\begin{proof}
To show that $\W$ is well defined, we show that the set $\fF$ of subsets $\V \subsetof \Z^\Y$ such that $\Eval\circ (J_\V \times I_\Y)$ is a homomorphism from $(\V \times \Y, I_\V \times S)$ to $(\Z, T)$ is closed under subsets and arbitrary disjoint unions. For all $\V \in \fF$, and all $\V' \subsetof \V$, we write $J_{\V'}^{\V}$ for the canonical inclusion of $\V'$ into $\V$, and  we calculate that
\begin{align*}
\Eval \circ (J_{\V'} \times I_\Y) \circ (I_{\V'} \times S)
&=
\Eval \circ (J_{\V} \times I_\Y) \circ (J_{\V'}^{\V} \times I_\Y) \circ (I_{\V'} \times S)
\\ &=
\Eval \circ (J_{\V} \times I_\Y) \circ (I_{\V} \times S) \circ (J_{\V'}^{\V} \times I_\Y)
\\ & \leq
T \circ \Eval \circ (J_{\V} \times I_\Y) \circ (J_{\V'}^{\V} \times I_\Y)
\\ & =
T \circ \Eval \circ (J_{\V'} \times I_\Y).
\end{align*}
Thus, $\V' \in \fF$. We conclude that $\fF$ is closed under subsets. For all $\V_1, \V_2 \in \fF$, if $\V_1$ and $\V_2$ are disjoint in the sense that they have no atoms in common, then we make the identification $\V_1 \uplus \V_2 = \V_1 \union \V_2$, and we calculate that
\begin{align*}
\Eval \circ (J_{\V_1 \union \V_2} \times I_\Y) & \circ (I_{\V_1 \union \V_2} \times S)
=
\Eval \circ (J_{\V_1 \union \V_2} \times S)
=
\Eval \circ ([J_{\V_1}, J_{\V_2}] \times S)
\\ &=
\Eval \circ [J_{\V_1} \times S, J_{\V_2} \times S]
=
[\Eval \circ (J_{\V_1} \times S), \Eval \circ (J_{\V_2} \times S)]
\\ &=
[\Eval \circ (J_{\V_1} \times I_\Y) \circ (I_{\V_1} \times S), \Eval \circ (J_{\V_2} \times I_\Y) \circ (I_{\V_2} \times S)]
\\ &\leq
[T \circ \Eval \circ (J_{\V_1} \times I_\Y) , T \circ \Eval \circ (J_{\V_2} \times I_\Y) ]
\\ &=
T \circ \Eval \circ [J_{\V_1} \times I_\Y, J_{\V_2} \times I_\Y]
\\ &=
T \circ \Eval \circ ([J_{\V_1}, J_{\V_2}] \times I_\Y)
=
T \circ \Eval \circ (J_{\V_1 \union \V_2} \times I_\Y).
\end{align*}
Apart from a single step in which we apply our assumption that $\V_1, \V_2 \in \fF$, the entire calculation is category-theoretic. It is simplified by the fact that $(\V_1 \union \V_2) \times \Y = (\V_1 \times \Y) \union (\V_2 \times \Y)$. The same argument applies to arbitrarily large families of pairwise disjoint elements of $\fF$. We conclude that $\fF$ is closed under arbitrary disjoint unions. Therefore, $\fF$ has a largest subset, i.e., $\W$ is well defined.

We construct the binary relation $Q$ on $\W$ directly as the join of all the binary relations on $\W$ satisfying the desired condition. For each binary relation $Q'$ on $\W$, the function $\Eval\circ (J_\W \times I_\Y)$ is a homomorphism from $(\W \times \Y, Q' \times S)$ to $(\Z, T)$ if and only if 
$$ \Eval\circ (J_\W \times I_\Y) \circ (Q' \times S) \leq T \circ \Eval\circ (J_\W \times I_\Y),$$
simply by definition. The composition of binary relations respects arbitrary joins, so the join of all binary relations $Q'$ satisfying this inequality also satisfies this inequality. This is exactly the binary relation $Q$. It is reflexive because $\Eval \circ (J_\W \times I_\Y)$ is a homomorphism from $(\W \times \Y, I_\W \times S)$ to $(\Z, T)$, simply by the definition of $\W$.

Property (1) holds by construction. To establish property (2), let $F$ be a homomorphism from  $(\X \times \Y, R \times S)$ to $(\Z, T)$. The universal property of the quantum function set $\Z^\Y$ guarantees the existence of a function $G_0 \: \X \To  \Z^\Y$ satisfying $\Eval \circ ( G_0 \times I_\Y) = F$. The function $G_0$ factors through its range $\V_0 \subsetof \Z^\Y$, yielding a surjective function $\overline G_0\: \X \To \V_0$ that satisfies $J_{\V_0} \circ \overline G_0 = G_0$. The surjectivity of $\overline G_0$ is equivalent to the equation $\overline G_0 \circ \overline G_0^\dagger = I_{\V_0}$, which we apply in the following calculation:
\begin{align*}
\Eval \circ (J_{\V_0} \times I_\Y) \circ (I_{\V_0} \times S)
& =
\Eval \circ (J_{\V_0} \times I_\Y) \circ  (\overline G_0 \times I_\Y) \circ (\overline G_0^\dagger \times I_\Y) \circ  (I_{\V_0} \times S)
\\ &=
\Eval \circ  ( G_0 \times I_\Y)  \circ  (I_{\X} \times S)\circ (\overline G_0^\dagger \times I_\Y)
\\ &=
F \circ  (I_{\X} \times S)\circ (\overline G_0^\dagger \times I_\Y)
\\ & \leq
F \circ  (R \times S)\circ (\overline G_0^\dagger \times I_\Y)
\\ &\leq
T \circ F \circ (\overline G_0^\dagger \times I_\Y)
\\ &=
T \circ \Eval \circ  ( G_0 \times I_\Y) \circ (\overline G_0^\dagger \times I_\Y)
\\ & =
T \circ \Eval \circ (J_{\V_0} \times I_\Y) \circ  (\overline G_0 \times I_\Y) \circ (\overline G_0^\dagger \times I_\Y)
\\ &=
T \circ \Eval \circ (J_{\V_0} \times I_\Y)
\end{align*}
Thus, $\Eval \circ (J_{\V_0} \times I_\Y)$ is a homomorphism from $(\V_0 \times \Y, I_{\V_0} \times S)$ to $(\Z, T)$, i.e., $\V_0 \in \fF$. The maximality of $\W$ now implies that $\V_0$ is a subset of $\W$, so we can define $G = J_{\V_0}^\W \circ\overline G_0$. 

The function $G$ makes the diagram commute on the level of quantum sets and functions:
\begin{align*}
\Eval \circ (J_\W \times I_\Y) \circ & (G \times I_Y)
=
\Eval \circ ((J_\W \circ G) \times I_\Y)
=
\Eval \circ ((J_\W \circ J_{\V_0}^\W \circ \overline G_0) \times I_\Y)
\\ & =
\Eval \circ ((J_{\V_0} \circ \overline G_0) \times I_\Y)
=
\Eval \circ ( G_0 \times I_\Y)
=
F
\end{align*}
For any other function $G'\: \X \To \W$ making the diagram commute, we have $$\Eval \circ ((J_\W \circ G') \times I_\Y) = \Eval \circ ((J_\W \circ G) \times I_\Y).$$ The universal property of the evaluation function $\Eval\:\Z^\Y \times \Y \To \Z$ then implies that $J_\W \circ G' = J_\W \circ G$. The inclusion $J_\W$ is injective, so we conclude that $G' = G$. Therefore, $G$ is the unique function making the diagram commute.

To show that $G$ is a homomorphism, we first show that $\Eval \circ (J_\W \times I_\Y)$ is a homomorphism from $(\W \times \Y, (G \circ R \circ G^\dagger) \times S)$ to $(\Z, T)$:
\begin{align*}
\Eval \circ (J_\W \times I_\Y) \circ  ((G \circ R \circ G^\dagger) \times S)
& =
\Eval \circ (J_\W \times I_\Y) \circ (G \times I_\Y) \circ (R \times S) \circ (G^\dagger \times I_\Y)
 \\ &=
F \circ (R \times S) \circ (G^\dagger \times I_\Y)
\\ & \leq
T \circ F \circ (G^\dagger \times I_\Y)
\\ & =
T \circ \Eval \circ (J_\W \times I_\Y) \circ (G \times I_\Y) \circ (G^\dagger \times I_\Y)
\\ &=
T \circ \Eval \circ (J_\W \times I_\Y) \circ ((G \circ G^\dagger) \times I_\Y)
\\ & \leq
T \circ \Eval \circ (J_\W \times I_\Y)
\end{align*}
By definition of $Q$, we find that $G \circ R \circ G^\dagger \leq Q$. Composing on the right by $G$, we conclude that $G \circ R \leq G \circ R \circ G^\dagger \circ G \leq Q \circ G$. In other words, we conclude that $G$ is a homomorphism from $(\X, R)$ to $(\W,Q)$.

We have shown that for every homomorphism $F\:(\X \times \Y, R \times S) \To (\Z, T)$, there is indeed a unique homomorphism $G\: (\X, R) \To (\W,Q)$ making the diagram commute. This universal property determines the structure $(\W,Q)$ up to canonical isomorphism, in the usual way. To establish equality, let $(\W',Q')$ be another structure with $\W \subseteq \Z^\Y$ that satisfies properties (1) and (2). The canonical isomorphism $G\: (\W',Q') \To (\W,Q)$ makes the appropriate diagram commute, and therefore, it satisfies $\Eval \circ(J_{\W'} \times I_\Y) = \Eval \circ(J_{\W} \times I_\Y) \circ (G \times I_\Y)$. As before, we appeal to the universal property of $\Eval$ to infer that $J_{\W'} = J_\W \circ G$. By proposition 10.1 of \cite{Kornell2}, we conclude that $\W'$ is a subset of $\W$. Similarly, $\W$ is a subset of $\W'$, so the two quantum sets are equal. The equation $J_{\W'} = J_\W \circ G$ now gives us $J_{\W} \circ G = J_{\W'} = J_\W = J_\W \circ I_\W $. Appealing to the injectivity of $J_\W$, we conclude that $G = I_\W$. This bijection is a homomorphism in both directions, so we have the following chain of inequalities:
$$Q = Q \circ I_\W \leq I_\W \circ Q ' = Q' = Q' \circ I_\W \leq I_\W \circ Q = Q$$
Therefore, $W' = W$ and $Q' = Q$.
\end{proof}

\begin{theorem}\label{thm:qPOS is monoidal closed}
The category $\qPOS$ is monoidal closed with respect to the monoidal product $\times$, i.e., for each pair of quantum posets $(\Y,S)$ and $(\Z,T)$, there exist a quantum poset $([\Y,\Z]_{\sqsubseteq},Q)$ and a monotone function $\Eval_{\sqsubseteq}:[\Y,\Z]_{\sqsubseteq}\times \Y\to\Z$
such that for each quantum poset $(\X,R)$ and each monotone function $F:\X\times \Y\to \Z$,
there is a unique monotone function $G \: \X \To  [\Y,\Z]_{\sqsubseteq}$ satisfying $\Eval_{\sqsubseteq} \circ ( G \times I_\Y) = F$:
\[\begin{tikzcd}
(\X \times \Y, R \times S)
\arrow{rrrd}{F}
\arrow[dotted]{d}[swap]{G \times I_\Y}
&
&
&
\\
([\Y,\Z]_{\sqsubseteq} \times \Y, Q \times S)
\arrow{rrr}[swap]{\mathrm{Eval_{\sqsubseteq}}}
&
&
&
(\Z, T).
\end{tikzcd}\]
Furthermore, $[\Y,\Z]_{\sqsubseteq}$ is the largest subset of $\Z^\Y$ such that evaluation restricted to $[\Y,\Z]_{\sqsubseteq} \times \Y$ is a monotone function $([\Y,\Z]_{\sqsubseteq} \times \Y, I \times S) \to (\Z, T)$, and $Q$ is the largest binary relation on $[\X,\Y]_\sqsubseteq$ such that
\begin{equation}\label{ineq:Q def}
\Eval_{\sqsubseteq}\circ (Q\times S)\leq T\circ \Eval_{\sqsubseteq}.
\end{equation}
\end{theorem}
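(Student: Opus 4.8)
The plan is to set $[\Y,\Z]_{\sqsubseteq} := \W$, $\Eval_{\sqsubseteq} := \Eval \circ (J_\W \times I_\Y)$, and take $Q$ to be exactly the binary relation furnished by Lemma \ref{lem:monoidal closure lemma}. The remark following the definition of a homomorphism records that, whenever the source and target relations are genuine orders, a homomorphism is precisely a monotone function. Consequently, essentially all of the content of the theorem is already delivered by Lemma \ref{lem:monoidal closure lemma}: the maximality characterizations of $\W$ and of $Q$, together with the universal property, become literal restatements of that lemma once we observe that $R \times S$, $Q \times S$, $I_\W \times S$ and $T$ are all orders (using Proposition \ref{prop:monoidal product of quantum posets}), so that the homomorphisms there are exactly the monotone functions of the theorem. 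Thus the only genuinely new obligation is to verify that $(\W, Q)$ is a quantum poset, i.e.\ that the reflexive relation $Q$ is in fact an order.

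Reflexivity of $Q$ is part of the conclusion of Lemma \ref{lem:monoidal closure lemma}. For transitivity I would exploit the maximality of $Q$: writing $E = \Eval_{\sqsubseteq}$, it suffices to check that $Q \circ Q$ satisfies the defining inequality. Using $S \circ S = S$ (Lemma \ref{lem:basic facts on quantum orders}) and the interchange law $(Q \circ Q)\times S = (Q\times S)\circ(Q\times S)$, one computes
\[ E \circ ((Q\circ Q)\times S) = E\circ(Q\times S)\circ(Q\times S) \leq T\circ E\circ (Q\times S) \leq T\circ T\circ E \leq T\circ E, \]
the last step being transitivity of $T$. Maximality of $Q$ then gives $Q \circ Q \leq Q$.

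The main obstacle is antisymmetry, $Q \wedge Q^\dagger = I_\W$. Since $Q$ is now a pre-order, $Q \wedge Q^\dagger$ is an equivalence relation by Lemma \ref{lem:pre-order generates equivalence relation}, and it contains $I_\W$; I would argue by contradiction that it equals $I_\W$. If not, Lemma \ref{lem:equivalence relation lemma} supplies a function $G\:\W\to\W$ with $G \neq I_\W$, $G \leq Q\wedge Q^\dagger \leq Q$ and $G\circ G = I_\W$. The crux is to show that the two functions $E$ and $E\circ(G\times I_\Y)$ from $\W\times\Y$ to $\Z$ coincide. From $G \leq Q$ and reflexivity of $S$ one gets $E\circ(G\times I_\Y) \leq E\circ(G\times S)\leq T\circ E$; and composing $E\circ(G\times S)\leq T\circ E$ on the right with $G\times I_\Y$, together with $(G\times S)\circ(G\times I_\Y) = I_\W\times S$, yields $E \leq E\circ(I_\W\times S) \leq T\circ E\circ(G\times I_\Y)$. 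In the pointwise order of Lemma \ref{lem:order between functions} (with target $(\Z,T)$) these two inequalities say $E \sqsubseteq E\circ(G\times I_\Y)$ and $E\circ(G\times I_\Y)\sqsubseteq E$, so antisymmetry of that order forces $E = E\circ(G\times I_\Y)$, i.e.\ $\Eval\circ(J_\W\times I_\Y) = \Eval\circ((J_\W\circ G)\times I_\Y)$. The universal property of the evaluation $\Eval\:\Z^\Y\times\Y\to\Z$ then gives $J_\W = J_\W\circ G$, and injectivity of the inclusion $J_\W$ yields $G = I_\W$, contradicting $G\neq I_\W$. Hence $Q\wedge Q^\dagger = I_\W$.

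With $(\W,Q)$ established as a quantum poset, the theorem follows: $\Eval_{\sqsubseteq}$ is monotone because property (1) of Lemma \ref{lem:monoidal closure lemma} makes it a homomorphism $(\W\times\Y, Q\times S)\to(\Z,T)$ between posets, and the required universal property (including uniqueness of $G$) is property (2) of that lemma, with ``homomorphism'' read as ``monotone function'' throughout. The maximality descriptions of $[\Y,\Z]_{\sqsubseteq}$ and of $Q$ are likewise immediate from the construction in the lemma.
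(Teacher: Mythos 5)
Your proposal is correct and follows essentially the same route as the paper: take $(\W,Q)$ from Lemma \ref{lem:monoidal closure lemma}, get transitivity of $Q$ from its maximality, and prove antisymmetry by contradiction via Lemmas \ref{lem:pre-order generates equivalence relation} and \ref{lem:equivalence relation lemma}. The only (harmless) variation is in the antisymmetry step, where you deduce $\Eval_\sqsubseteq = \Eval_\sqsubseteq \circ (G \times I_\Y)$ from the antisymmetry of the pointwise order of Lemma \ref{lem:order between functions}, whereas the paper intersects the inequalities for $Q$ and $Q^\dagger$ directly to get $(Q \wedge Q^\dagger) \times I_\Y \leq \Eval_\sqsubseteq^\dagger \circ (T \wedge T^\dagger) \circ \Eval_\sqsubseteq = \Eval_\sqsubseteq^\dagger \circ \Eval_\sqsubseteq$; both reduce to the antisymmetry of $T$ together with Lemma \ref{lem:inequality between functions is equality}.
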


\begin{proof}
We define $[\Y,\Z]_\sqsubseteq$ to be the quantum set $\W$ in Lemma \ref{lem:monoidal closure lemma}, and $Q$ is obtained via the same lemma.
Furthermore, we define $\Eval_{\sqsubseteq}:=\Eval\circ (J_\W\times I_\Y)$.
By definition of $\Eval_\sqsubseteq$ and $Q$ it follows that $Q$ is the largest binary relation on $[\Y,\Z]_\sqsubseteq$ such that Equation (\ref{ineq:Q def}) holds. We need to show that $Q$ is an order on $\W$. It satisfies $I_\W\leq Q$ by Lemma \ref{lem:monoidal closure lemma}. To establish transitivity, we compute that
\begin{align*}
    \Eval_{\sqsubseteq}\circ ((Q\circ Q) \times S) & = \Eval_{\sqsubseteq}\circ ((Q\circ Q) \times (S\circ S))=\Eval_{\sqsubseteq}\circ (Q\times S)\circ (Q\times S) \\
    & \leq T\circ \Eval_{\sqsubseteq}\circ (Q\times S)  \leq T^2\circ \Eval_{\sqsubseteq} \leq T\circ \Eval_{\sqsubseteq},
\end{align*}
where we use Equation (2) of Lemma \ref{lem:basic facts on quantum orders} in the first equality. Since $Q$ is the largest binary relation on $\W$ satisfying Equation (\ref{ineq:Q def}), we obtain $Q\circ Q\leq Q$. Thus, we have proven that $Q$ is a pre-order. 

Assume that $Q$ is not an order, so $Q\wedge Q^\dag\neq I_\W$. Let $E=Q\wedge Q^\dag$. Then $E$ is an equivalence relation by Lemma \ref{lem:pre-order generates equivalence relation}. Since $Q$ is not an order, we have $E\neq I_\W$; hence Lemma \ref{lem:equivalence relation lemma} yields a function $K:\W\to\W$ 
such that $K\neq I_\W$ and $K\leq E$.

We have that $Q\times S\leq \Eval_{\sqsubseteq}^\dag\circ \Eval_{\sqsubseteq}\circ (Q\times S)\leq \Eval_{\sqsubseteq}^\dag\circ T\circ \Eval_{\sqsubseteq}$
 by definition of a function and Equation (\ref{ineq:Q def}), whence
$Q\times I_\Y\leq \Eval_{\sqsubseteq}^\dag\circ T\circ \Eval_{\sqsubseteq}.$
Applying the adjoint operation to both sides of this inequality, we obtain 
$Q^\dag\times I_\Y \leq \Eval_{\sqsubseteq}^\dag\circ T^\dag\circ \Eval_{\sqsubseteq}.$
Thus,
\begin{align*}
K\times I_\Y & \leq E\times I_\Y  = (Q\wedge Q^\dag) \times I_\Y= (Q\times I_\Y)\wedge (Q^\dag\times I_\Y) \\
    & \leq (\Eval_{\sqsubseteq}^\dag\circ T\circ \Eval_{\sqsubseteq})\wedge (\Eval_{\sqsubseteq}^\dag\circ T^\dag\circ \Eval_{\sqsubseteq})\\
    & = \Eval_{\sqsubseteq}^\dag\circ (T\wedge T^\dag)\circ \Eval_{\sqsubseteq}= \Eval_{\sqsubseteq}^\dag\circ \Eval_{\sqsubseteq},
\end{align*}
where we appeal to Lemma \ref{lem:product distributes over joins} in the second equality and to Proposition \ref{prop:intersection} in the penultimate equality.
Hence,
$\Eval_{\sqsubseteq}\circ(K\times I_\Y) \leq  \Eval_{\sqsubseteq}\circ \Eval_{\sqsubseteq}^\dag\circ \Eval_{\sqsubseteq}\leq \Eval_{\sqsubseteq},$
and by Lemma \ref{lem:inequality between functions is equality} it follows that 
$\Eval_{\sqsubseteq}\circ(K\times I_\Y) = \Eval_{\sqsubseteq},$
or equivalently,
$\Eval\circ (J_\W\times I_\Y)\circ (E\times I_\Y)=\Eval\circ (J_\W\times I_\Y)\circ (I_\W\times I_\Y).$
By the universal property in Lemma \ref{lem:monoidal closure lemma}, it follows that $K\times I_\Y=I_\W\times I_\Y$. By Lemma \ref{lem:projection maps are surjective} and Proposition \ref{prop:tensor product of functions}, we obtain
\[  K=K\circ P\circ P^\dag = P\circ (K\times I_\Y)\circ P^\dag = P\circ (I_\W\times I_\Y)\circ P^\dag = I_\W\circ P\circ P^\dag = I_\W,\]
contradicting our choice of $K$. We conclude that $Q$ must be an order. It then follows immediately from Equation (\ref{ineq:Q def}) that $\Eval_{\sqsubseteq}$ is monotone. The claimed universal property of $([\Y,\Z]_\below, \Eval_\below)$ is just the universal property of Lemma \ref{lem:monoidal closure lemma} because a monotone function is just a homomorphism between two quantum sets equipped with preorders.
\end{proof}

If $\Y$ is trivially ordered, i.e., if $S = I_\Y$, then $[\Y,\Z]_\below$ is equal to $\Y^\Z$ as a quantum set because $\Eval \circ (J_\W \times I_\Y)$ is trivially monotone as a function $\W \times \Y \to \Z$. Hence, the quantum function set $\Y^\Z$ is canonically ordered.

\begin{example}
Let $\X$ be a quantum set, and as before, let $\mathbf 1$ be the quantum set whose only atom is $\CC$. Then, $[\X, `\RR]_\below$ is equal to $`\RR^\X$ as a quantum set, and hence the quantum function set $`\RR^\X$ is canonically ordered. Functions $\mathbf 1 \to `\RR^\X$ are in canonical bijection with functions $\X \to `\RR$, which are in canonical bijection with the self-adjoint operators in $\ell(\X)$, the set of all operators that are affiliated with $\ell^\infty(\X)$ \cite{Kornell2}*{Proposition 11.2}. In this sense, $`\RR^\X$ is the quantum set of observables on $\X$. Under this correspondence, the order on functions $\mathbf 1 \to `\RR^\X$ corresponds to the so-called spectral order on self-adjoint operators \cite{Olson}: We reason in the notation of \cite{Kornell2}*{Proposition 11.2}, where $e_\alpha \in \ell^\infty(`\RR)$ is the atomic projection corresponding to a real number $\alpha$ and $r \in \ell(`\RR)$ is the self-adjoint operator defined by $re_\alpha = \alpha e_\alpha$ for all $\alpha \in \RR$. Using brackets to notate spectral projections and real numbers to denote the corresponding functions $\{*\} \to \RR$, we reason that for all functions $F, G\: \X \to `\RR$,
\begin{align*}
F \below G
& \EV
G \circ F^\dagger \leq `(\leq)
\\ & \EV
\text{for all }\alpha, \beta \in \RR, \quad `\beta^\dagger \circ G \circ F^\dagger \circ `\alpha \leq `(\beta^\dagger \circ (\leq) \circ \alpha)
\\ &\EV
\text{for all }\alpha, \beta \in \RR, \quad \text{if } `\alpha^\dagger \circ F \not \perp `\beta^\dagger \circ G, \quad \text{then } \alpha \leq\beta
\\ & \EV
\text{for all } \alpha > \beta \in \RR, \quad  `\alpha^\dagger \circ F \perp `\beta^\dagger \circ G
\\ & \EV
\text{for all } \alpha > \beta \in \RR, \quad F^\star(e_\alpha) \perp G^\star(e_\beta)
\\ & \EV
\text{for all } \alpha > \beta \in \RR, \quad [F^\star(r) = \alpha] \perp [G^\star(r) = \beta]
\\ & \EV
\text{for all } \lambda \in \RR, \quad [F^\star(r) \leq \lambda] \geq [G^\star(r) \leq \lambda].
\end{align*}
The first equivalence follows by Lemma \ref{lem:order between functions}; the second equivalence follows by Lemma \ref{lem:coproduct order}(b); the third equivalence follows by Proposition \ref{appendix.A.2}; the fourth equivalence is logical; the fifth equivalence follows by \cite{Kornell2}*{Proposition B.8}; the sixth equivalence follows by the functional calculus. The seventh equivalence follows from \cite{Kornell2}*{Proposition 5.4} because this proposition implies that $[a \leq \lambda] = \sum_{\alpha \leq \lambda} [a = \alpha]$ for all $a \in \ell^\infty(\X)$ and all $\lambda \in \RR$ and hence for all $a \in \ell(\X)$ and all $\lambda \in \RR$.
\end{example}

\section{Quantum power set}\label{sec:embedding into the powerset}

The power set functor $\Pow\: \cat{Rel} \to \cat{Set}$ is defined to map each set $A$ to its power set $\Pow(A) := \{C : C \subsetof A \}$ and each binary relation $r \in \cat{Rel}(A,B)$ to its direct image function $\Pow(r)\: C \mapsto \{b \in B : (a,b) \in r\text{ for some } a \in C\}$. Up to natural isomorphism, this power set functor may also be defined as the right adjoint of the inclusion functor $\Inc\: \cat{Set} \to \cat{Rel}$ \cite{JencovaJenca}. We show that the inclusion functor $\Inc\: \cat{qSet} \to \cat{qRel}$ also has a right adjoint, thereby obtaining a quantum analog of the power set functor.

For each ordinary set $A$, the power set $\Pow(A)$ is canonically isomorphic to the function set $\BB^A$, where $\BB := \{0,1\}$; this motivates our definition of the quantum power set:

\begin{definition}\label{def:power set}
Let $\X$ be a quantum set. We define the \emph{quantum power set} of $\X$ to be the quantum set $\qPow(\X):=(`\BB)^{\X^*}$. The notation $\X^*$ refers to the dual of the quantum set $\X$, which is obtained by dualizing each atom of $\X$ \cite{Kornell2}*{Definition 3.4}. For each ordinary set $A$, the quantum set $`A$ is naturally isomorphic to its dual $(`A)^*$, so this feature of the definition is only significant when $\X$ has an atom of dimension larger than $1$. The quantum analogue of the membership relation is the adjoint of the unique binary relation $\ni_\X$ from $\Pow(\X)$ to $\X$ such that the following diagram in $\mathbf{qRel}$ commutes:
$$
\begin{tikzcd}
\qPow(\X) \times \X^* 
\arrow{r}{\Eval}
\arrow[dotted]{d}[swap]{\ni_\X \times I_{\X^*}}
&
`\BB
\arrow{d}{`1^\dagger}
\\
\X \times \X^*
\arrow{r}[swap]{E_\X}
&
\mathbf 1.
\end{tikzcd}
$$
Here, $1\: \{\ast\} \to \BB$ is the function corresponding to $1 \in \BB$, and $E_\X\: \X \times \X^* \to \mathbf 1$ is the counit of the duality between $\X$ and $\X^*$ \cite{Kornell2}*{section III}. Both the existence and the uniqueness of the binary relation $\ni_\X$ follow immediately from the fact that $\X$ together with $E_\X$ is an inner hom from $\X^*$ to $\mathbf 1$ in the symmetric monoidal category $\cat{qRel}$ \cite{Kornell2}*{Theorem 3.6}.
\end{definition}

\begin{theorem}\label{thm:power set}
The assignment $\X\mapsto\qPow(\X)$ extends to a functor $\qPow:\qRel\to\qSet$ that is right adjoint to the inclusion $\Inc:\qSet\to\qRel$. The binary relation $\ni_\X$ is the $\X$-component of the counit for each quantum set $\X$.
\end{theorem}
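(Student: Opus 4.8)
The plan is to exhibit, for each quantum set $\X$, the pair $(\qPow(\X), \ni_\X)$ as a universal arrow from the functor $\Inc$ to $\X$; the asserted functoriality of $\qPow$ and the naturality of the counit then follow from the standard characterization of adjoint functors in terms of universal arrows (see, e.g., \cite{AdamekHerrlichStrecker}). Concretely, I would fix a quantum set $\Y$ and show that post-composition with $\ni_\X$,
\[ \Phi_\Y\: \qSet(\Y, \qPow(\X)) \To \qRel(\Y, \X), \qquad F \mapsto \ni_\X \circ F, \]
is a bijection, natural in $\Y$. Since $\Inc(F) = F$ as a binary relation and the $\X$-component of the counit is to be $\ni_\X$, this is exactly the required universal property, and it simultaneously identifies $\ni_\X$ as the counit.

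To analyze $\Phi_\Y$ I would factor it through two bijections already supplied by the structure of $\qSet$ and $\qRel$. First, because $\qSet$ is monoidal closed with exponential $(`\BB)^{\X^*} = \qPow(\X)$ \cite{Kornell2}*{Theorem 9.1}, currying gives a bijection $F \mapsto \hat F := \Eval \circ (F \times I_{\X^*})$ between functions $\Y \To \qPow(\X)$ and functions $\Y \times \X^* \To `\BB$. Second, because $\X$ together with $E_\X$ is an inner hom from $\X^*$ to $\mathbf 1$ in $\qRel$ \cite{Kornell2}*{Theorem 3.6}, the assignment $T \mapsto E_\X \circ (T \times I_{\X^*})$ is a bijection from $\qRel(\Y,\X)$ onto $\qRel(\Y \times \X^*, \mathbf 1)$. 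Using the defining equation $`1^\dagger \circ \Eval = E_\X \circ (\ni_\X \times I_{\X^*})$ of $\ni_\X$ together with the bifunctoriality of $\times$, I would compute
\[ E_\X \circ ((\ni_\X \circ F) \times I_{\X^*}) = E_\X \circ (\ni_\X \times I_{\X^*}) \circ (F \times I_{\X^*}) = `1^\dagger \circ \Eval \circ (F \times I_{\X^*}) = `1^\dagger \circ \hat F. \]
Since $T \mapsto E_\X \circ (T \times I_{\X^*})$ is injective, this shows that $\ni_\X \circ F = R$ holds if and only if $`1^\dagger \circ \hat F = E_\X \circ (R \times I_{\X^*})$.

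It therefore remains only to see that for each relation $K \in \qRel(\Y \times \X^*, \mathbf 1)$ there is a unique function $G\: \Y \times \X^* \To `\BB$ with $`1^\dagger \circ G = K$; applying this to $K = E_\X \circ (R \times I_{\X^*})$ and currying the resulting $G = \hat F$ produces the unique $F$ with $\Phi_\Y(F) = R$. This reduces the whole theorem to the lemma that, for every quantum set $\Z$, post-composition with $`1^\dagger$ is a bijection
\[ \qSet(\Z, `\BB) \To \qRel(\Z, \mathbf 1), \qquad G \mapsto `1^\dagger \circ G, \]
the quantum analogue of the classical bijection sending a function $g\: A \To \BB$ to the relation $g^{-1}(1) = 1^\dagger \circ g$. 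This lemma is the main obstacle, and I expect to prove it atom by atom: a function $\Z \To `\BB$ amounts to a family of projections $(p_Z)_{Z \atomof \Z}$ in $\ell^\infty(\Z)$, hence to a choice of subspace of each atom $Z$, whereas a relation $\Z \To \mathbf 1$ amounts to a choice of subspace of each $L(Z, \CC)$; one then checks that $`1^\dagger \circ (-)$ realizes the resulting correspondence between these two families of subspace data. As a consistency check, the instance $\X = \mathbf 1$ of the theorem is precisely this lemma, since $\qPow(\mathbf 1) = `\BB$ and $\ni_{\mathbf 1} = `1^\dagger$, so the lemma must be established independently to avoid circularity.

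Finally, naturality of $\Phi_\Y$ in $\Y$ is immediate from associativity of composition, so the universal property holds uniformly in $\Y$, and the cited machinery then delivers the right adjoint $\qPow$, its action on morphisms, and the naturality of the counit $\ni$.
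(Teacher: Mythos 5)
Your proposal is correct and follows essentially the same route as the paper: the paper also establishes the universal property $\ni_\X \circ F_R = R$ by chaining the inner-hom bijection $\qRel(\Y,\X) \cong \qRel(\Y\times\X^*,\mathbf 1)$, the bijection $\qSet(\Y\times\X^*,`\BB) \cong \qRel(\Y\times\X^*,\mathbf 1)$ given by post-composition with $`1^\dagger$, and currying in $\qSet$, then invokes the universal-arrow characterization of adjunctions. The one step you single out as the ``main obstacle'' is not proved in the paper but cited directly as \cite{Kornell2}*{Theorem B.8}, so your atom-by-atom argument for it, while plausible, is unnecessary.
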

\begin{proof}
Let $\Y$ be a quantum set, and let $R$ be a binary relation from $\Y$ to $\X$. We show that there exists a unique function $F_R:\Y\to\qPow(\X)$ such that the following two equivalent diagrams commute:
\[\begin{tikzcd}
\Inc(\Y)\ar{rd}{R}\ar{d}[swap]{\Inc(F_R)} &     & & \Y\ar{rd}{R}\ar{d}[swap]{F_R}     \\
(\Inc\circ\qPow)(\X)\ar{r}[swap]{\ni_\X} & \X, & &  `\BB^{\X^*}\ar{r}[swap]{\ni_\X} & \X.
\end{tikzcd}\]
First, we observe that $\X$ together with $E_\X$ is an inner hom from $\X^*$ to $\mathbf 1$ in the dagger compact category $\cat{qRel}$, so we have a natural bijection
\begin{equation*}\label{eq:power set C}
\qRel(\Y,\X)\to\qRel(\Y\times\X^*,\mathbf 1), \qquad S\mapsto E_{\X}\circ (S\times I_{\X^*}),
\end{equation*} which
maps $R$ to $E_{\X}\circ (R\times I_{\X^*})$. Next, by \cite{Kornell2}*{Theorem B.8}, we have a natural bijection
\[\qSet(\Y\times\X^*,`\BB)\to\qRel(\Y\times\X^*,\mathbf 1),\qquad F\mapsto `1^\dag\circ F;\]
hence there is a unique function $G_R:\Y\times\X^*\to`\BB$ such that 
$`1^\dag\circ G_R=E_{\X}\circ (R\times I_{\X^*}).$
Finally, since $\qSet$ is monoidal closed, we have a natural bijection
\[\qSet(\Y,\mathbf `\BB^{\X^*})\to\qSet(\Y\times\X^*,`\BB), \qquad F\mapsto\Eval\circ (F\times I_{\X^*});\]
hence there is a unique function $F_R:\Y\to\mathbf `\BB^{\X^*}$ such that $G_R=\Eval\circ (F_R\times I_{\X^*})$.
Thus, there is a function $F_R:\Y\to\mathbf `\BB^{\X^*}$ such that
\begin{equation*}\label{eq:power set B}
    `1^\dag\circ \Eval\circ (F_R\times I_{\X^*})=E_{\X}\circ (R\times I_{\X^*}).
\end{equation*}
Therefore, by the definition of $\ni_\X$,
\[E_{\X}\circ ( (\ni_\X\circ F_R)\times I_\X)=E_{\X}\circ (\ni_\X\times I_{\X^*})\circ (F_R\times I_{\X^*})=`1^\dag\circ\Eval\circ (F_R\times I_{\X^*})=E_{\X}\circ (R\times I_\X).\]
Appealing again to the fact that $\X$ together with $E_\X$ is the inner hom from $\X^*$ to $\mathbf 1$ in the dagger compact category $\cat{qRel}$, we conclude that $\ni_\X\circ F_R=R$.

To establish the uniqueness of $F_R$, let $F:\Y\to `\BB^{\X^*}$ be a function such that $\ni_\X\circ F=R$. Then $\ni_\X\circ F=\ni_\X\circ F_R$, and hence 
\begin{align*}
    `1^\dag\circ\Eval\circ (F\times I_{\X^*}) & = E_{\X}\circ (\ni_\X\times I_{\X^*})\circ (F\times I_{\X^*})= E_{\X}\circ ((\ni_\X\circ F)\times I_{\X^*})\\
& = E_{\X}\circ ((\ni_\X\circ F_R)\times I_{\X^*})= E_{\X}\circ (\ni_\X\times I_{\X^*})\circ (F_R\times I_{\X^*})\\
& =`1^\dag\circ\Eval\circ (F_R\times I_{\X^*}),
\end{align*}
where we use the diagram in the statement defining $\ni_\X$ in the first and the last equalities. Appealing again to \cite{Kornell2}*{Theorem B.8}, we find that $\Eval\circ (F\times I_{\X^*})=\Eval\circ (F_R\times I_{\X^*}).$ Because $`\BB^{\X^*}$ together with $\Eval\: `\BB^{\X^*} \times \X^* \to `\BB$ is the inner hom from $\X^*$ to $`\BB$ in $\cat{qSet}$, we conclude that $F=F_R$.

Altogether, we have shown that for each quantum set $\Y$ and each binary relation $R$ from $\Y$ to $\X$, there exists a unique function $F_R$ from $\Y$ to $`\BB^{X^*}$ such that $\ni_\X \circ F_R = R$, or in other words, there exists a unique function $F_R$ from $\Y$ to $\qPow(\X)$ such that $\ni_X \circ \Inc (F_R) = R$. It follows that the functor $\Inc$ has a right adjoint which maps each quantum set $\X$ to $\qPow(\X)$ \cite[Theorem IV.1.2.(iv)]{MacLane}, as claimed.
\end{proof}

Each ordinary poset $(A, {\below})$ may be embedded into the poset $(\Pow(A), {\subsetof})$ by mapping each element $a \in A$ to its principal down set $\downarrow\!a : = \{a' \in A : a' \below a\}$. We generalize this proposition to the quantum setting.

Fix a quantum set $\X$. The quantum power set $\qPow(\X)$ is canonically ordered: Let $\X^*$ be ordered flatly, that is by $I_{\X^*}$. Let $\BB$ be ordered such that $0 \sqsubset 1$. Applying Theorem \ref{thm:qPOS is monoidal closed}, we obtain a quantum poset $[\X^*, `\BB]_\below$ whose underlying quantum set $\W$ is $`\BB^{\X^*}$, i.e., $\qPow(\X)$. Indeed, Lemma \ref{lem:monoidal closure lemma} characterizes $\W$ as the largest subset of $`\BB^{\X^*}$ such that $\Eval\circ (J_\W\times I_{\X^*}):(\W\times\X^*,I_\W\times I_{\X^*})\to (`\BB,`{\below})$ is a homomorphism, and it follows from Example \ref{ex:function with trivially ordered domain is monotone} that $\Eval\circ (J_\W\times I_{\X^*})$ is a homomorphism for any subset $\W$ of $`\BB^{\X^*}$.

Fix an order $R$ on $\X$. Then, $P : = E_{\X} \circ (R^\dagger \times I_{\X^*})$ is a binary relation from $\X \times \X^*$ to $\mathbf 1$, and there is a unique function $\tilde P\: \X \times \X^* \to \mathbf `\BB$ such that $`1^\dagger \circ \tilde P  = P$ \cite{Kornell2}*{Theorem B.8}. Intuitively, the function $\tilde P$ yields $1$ if and only if its first argument is above its second argument. Appealing to the universal property of the evaluation function, we obtain a function $G\: \X \to \qPow(\X)$ such that the following diagram in $\cat{qSet}$ commutes:
$$
\begin{tikzcd}
\X \times \X^*
\arrow{d}[swap]{G \times I_{\X^*}}
\arrow{rd}{\tilde P}
\\
\qPow(\X) \times \X^* 
\arrow{r}[swap]{\Eval}
&
`\BB.
\end{tikzcd}
$$
\noindent We will show that this function $G$ is an order embedding $(\X,R) \to [(\X^*,I_{\X^*}),(`\BB,`{\below})]_{\below}$.

\begin{proposition}\label{prop:upsets are monotone}
Let $(\Y, S)$ be a quantum poset, and let $Q$ be a binary relation from $\Y$ to $\mathbf 1$. Then, $Q \circ S^\dagger = Q$ if and only if the unique function $\tilde Q\: \Y \to `\BB$ such that $`1^\dagger \circ \tilde Q = Q$ is monotone.
\end{proposition}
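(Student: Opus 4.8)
The plan is to reduce the stated equivalence first to a single inequality and then to a purely atom-wise statement about orthogonal complements. Since $S$ is an order it is reflexive, and so is its opposite $S^\dagger$ by Lemma \ref{lem:basic facts on quantum orders}; hence $Q = Q \circ I_\Y \le Q \circ S^\dagger$ always holds, and it suffices to prove that $Q \circ S^\dagger \le Q$ is equivalent to the monotonicity of $\tilde Q$, that is, to $\tilde Q \circ S \le `(\below) \circ \tilde Q$ in the sense of Definition \ref{def:monotone map}(1).

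Next I would analyze the right-hand side using the coproduct structure $`\BB = \mathbf 1 \uplus \mathbf 1$, whose canonical injections are the functions $`0,`1\colon \mathbf 1 \to `\BB$. Writing the order on $`\BB$ as $`(\below) = I_{`\BB} \vee (`1 \circ `0^\dagger)$ and using that composition distributes over joins together with the orthogonality relations $`1^\dagger \circ `1 = I_{\mathbf 1}$ and $`0^\dagger \circ `1 = \bot$ (Lemma \ref{lem:coproduct order}(a)), one computes $`1^\dagger \circ `(\below) = `0^\dagger \vee `1^\dagger$ and $`0^\dagger \circ `(\below) = `0^\dagger$. I would then test the inequality $\tilde Q \circ S \le `(\below) \circ \tilde Q$ componentwise via Lemma \ref{lem:coproduct order}(b): its $`1$-component has right-hand side $(`0^\dagger \vee `1^\dagger) \circ \tilde Q = Q \vee Q^c = \top$, the maximal relation $\Y \to \mathbf 1$ (because the fibers of the function $\tilde Q$ span at each atom), so that component is automatic; its $`0$-component reads $Q^c \circ S \le Q^c$, where $Q^c := `0^\dagger \circ \tilde Q$. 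Thus $\tilde Q$ is monotone if and only if $Q^c \circ S \le Q^c$.

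It then remains to establish $Q^c \circ S \le Q^c \Leftrightarrow Q \circ S^\dagger \le Q$. The key observation is that $Q^c$ is the orthocomplement of $Q$ in the orthomodular lattice $\qRel(\Y,\mathbf 1)$: by \cite{Kornell2}*{Theorem B.8} the function $\tilde Q$ splits the fiber $L(Y,\CC)$ over each atom $Y \atomof \Y$ into the orthogonal direct sum $\tilde Q(Y,0) \oplus \tilde Q(Y,1)$, and a direct atom-by-atom check gives $Q(Y,\CC) = \tilde Q(Y,1)$ and $Q^c(Y,\CC) = \tilde Q(Y,0) = Q(Y,\CC)^\perp$. With this identification the desired equivalence becomes, atom by atom, an instance of the elementary Hilbert-space fact that a linear map $T$ satisfies $T(V^\perp) \subseteq W^\perp$ if and only if $T^\ast(W) \subseteq V$. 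Concretely, for $s \in S(Y,Y')$ let $T_s\colon L(Y',\CC) \to L(Y,\CC)$ be precomposition by $s$, so that $T_s^\ast$ is precomposition by $s^\ast$, a morphism of $S^\dagger$. Unwinding the definitions of composition, $Q^c \circ S \le Q^c$ says exactly $T_s(Q(Y')^\perp) \subseteq Q(Y)^\perp$ for all such $s$, whereas $Q \circ S^\dagger \le Q$ says exactly $T_s^\ast(Q(Y)) \subseteq Q(Y')$ for all such $s$; the two families of inclusions are equivalent, which closes the argument.

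I expect the last paragraph to be the crux. The two conditions in the proposition use $S$ and its opposite $S^\dagger$ in genuinely dual ways — monotonicity closes the ``$0$-region'' of $\tilde Q$ downward, while $Q \circ S^\dagger = Q$ closes $Q$ upward — so some de Morgan-type duality is unavoidable. The main obstacle is that $\qRel$ is enriched in orthomodular rather than Boolean lattices, so the classical Schröder rule of relation algebra is not available wholesale; the way around this is to localize everything to a single orthogonal-complement identity, which holds at the level of individual Hilbert spaces and hence atom by atom. A secondary point requiring care is the justification that $Q^c = Q^\perp$, which rests precisely on the fact that functions into $`\BB$ correspond to orthogonal decompositions of each atom's fiber.
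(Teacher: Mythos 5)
Your argument is correct, and it reaches the conclusion by a genuinely different route from the paper's. The paper starts from formulation (2) of monotonicity, $\tilde Q \circ S \circ \tilde Q^\dagger \leq `({\below})$, rewrites it as orthogonality to $`({\not\below}) = `0 \circ `1^\dagger$, and then runs the whole computation through the categorical trace of Appendix \ref{appendix.A}: Proposition \ref{appendix.A.2} turns orthogonality into $\Tr((\tilde Q \circ S \circ \tilde Q^\dagger)^\dagger \circ `0 \circ `1^\dagger) = \bot$, and cyclicity plus dagger-invariance of the trace convert this into $\Tr(Q \circ S^\dagger \circ (\neg Q)^\dagger) = \bot$, i.e., $Q \circ S^\dagger \perp \neg Q$, i.e., $Q \circ S^\dagger \leq Q$. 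You instead use formulation (1), split the codomain $`\BB = \mathbf 1 \uplus \mathbf 1$ via Lemma \ref{lem:coproduct order}(b) to reduce monotonicity to the single inequality $\neg Q \circ S \leq \neg Q$ (your $Q^c$), and then prove $\neg Q \circ S \leq \neg Q \Leftrightarrow Q \circ S^\dagger \leq Q$ atom by atom using the adjoint of precomposition. The two proofs rest on the same underlying duality --- your identity $T(V^\perp) \subseteq W^\perp \Leftrightarrow T^\ast(W) \subseteq V$ is exactly what the trace cyclicity packages --- and both need the same input that $`0^\dagger \circ \tilde Q = \neg(`1^\dagger \circ \tilde Q)$, which you and the paper both extract from \cite{Kornell2}*{Theorem B.8}. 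What the paper's route buys is a computation that never leaves the dagger-compact calculus of $\qRel$; what yours buys is independence from the trace machinery and an intermediate statement with visible classical content, namely that $\neg Q$ is closed downward under $S$ exactly when $Q$ is closed upward. Both are complete; your reduction of the biconditional to the single inequality $Q \circ S^\dagger \leq Q$ via reflexivity of $S^\dagger$ is also exactly the paper's final step.
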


\begin{proof}
We reason in terms of the trace on $\cat{qRel}$ (Appendix \ref{appendix.A}), as follows:
\begin{align*}
& \text{$\tilde Q$ is monotone}
 \EV
\tilde Q \circ S \circ \tilde Q^\dagger \leq `({\below})
\EV
\tilde Q \circ S \circ \tilde Q^\dagger \perp `(\not \sqsubseteq) = `0 \circ `1^\dagger
\\ & \EV
\Tr((\tilde Q \circ S \circ \tilde Q^\dagger)^\dagger \circ `0 \circ `1^\dagger ) = \bot
\EV
\Tr(`1^\dagger \circ \tilde Q \circ S^\dagger \circ \tilde Q^\dagger \circ `0) = \bot
\\ & \EV
\Tr(Q \circ S^\dagger \circ \neg Q^\dagger) = \bot
\EV
Q \circ S^\dagger \perp \neg Q
\EV
Q \circ S^\dagger \leq  Q
\\ & \EV
Q \circ S^\dagger = Q.
\end{align*}
In this computation, we use the fact that $\tilde Q^\dagger \circ `0 = (`0^\dagger \circ \tilde Q)^\dagger = \neg (\neg (`0^\dagger \circ \tilde Q))^\dagger = \neg (`1^\dagger \circ \tilde Q)^\dagger = \neg Q^\dag$. The equality $\neg (`0^\dagger \circ \tilde Q )= `1^\dagger \circ \tilde Q$ holds because binary relations from $`\BB$ to $\mathbf 1$ correspond to projections in $\ell^\infty(`\BB)$ and precomposition by functions corresponds to the application of unital normal $*$-homomorphisms \cite{Kornell2}*{Theorem B.8}.
\end{proof}

\begin{lemma}\label{lem:tilde P is monotone}
The function $\tilde P$ is monotone $(\X \times \X^*, R \times I_{\X^*}) \to (`\BB, `{\below})$. Furthermore, for all orders $T$ on $\X$, if $\tilde P$ is monotone $(\X \times \X^*, T \times I_{\X^*}) \to (`\BB, `{\below})$, then $T \leq R$.
\end{lemma}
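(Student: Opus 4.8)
The plan is to reduce both assertions to the absorption criterion of Proposition \ref{prop:upsets are monotone} and then to unwind the resulting equations using the interchange law for $\times$ together with the fact that $\X$ equipped with $E_\X$ is an inner hom from $\X^*$ to $\mathbf 1$ in $\cat{qRel}$. Recall that $P = E_\X \circ (R^\dagger \times I_{\X^*})$ and that $\tilde P$ is characterized by $`1^\dagger \circ \tilde P = P$.

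For the first claim, I would first note that $R \times I_{\X^*}$ is an order on $\X \times \X^*$ by Proposition \ref{prop:monoidal product of quantum posets}, so Proposition \ref{prop:upsets are monotone} applies with $(\Y,S) = (\X \times \X^*, R \times I_{\X^*})$ and $Q = P$: the function $\tilde P$ is monotone if and only if $P \circ (R \times I_{\X^*})^\dagger = P$. I would then verify this equation directly,
\[
P \circ (R \times I_{\X^*})^\dagger = E_\X \circ (R^\dagger \times I_{\X^*}) \circ (R^\dagger \times I_{\X^*}) = E_\X \circ ((R^\dagger \circ R^\dagger) \times I_{\X^*}) = E_\X \circ (R^\dagger \times I_{\X^*}) = P,
\]
where the middle equality is the interchange law for $\times$ and the third equality uses Lemma \ref{lem:basic facts on quantum orders}, namely $R^\dagger \circ R^\dagger = R^\dagger$ (valid because $R^\dagger$ is a pre-order). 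This establishes monotonicity of $\tilde P$ for $R \times I_{\X^*}$.

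For the second claim, suppose $T$ is an order on $\X$ with $\tilde P$ monotone $(\X \times \X^*, T \times I_{\X^*}) \to (`\BB, `{\below})$. Applying Proposition \ref{prop:upsets are monotone} once more, now with $S = T \times I_{\X^*}$, yields $P \circ (T \times I_{\X^*})^\dagger = P$, which by the interchange law is the equation
\[
E_\X \circ ((R^\dagger \circ T^\dagger) \times I_{\X^*}) = E_\X \circ (R^\dagger \times I_{\X^*}).
\]
The assignment $S \mapsto E_\X \circ (S \times I_{\X^*})$ is the natural bijection $\qRel(\X,\X) \to \qRel(\X \times \X^*, \mathbf 1)$ witnessing that $\X$ together with $E_\X$ is an inner hom from $\X^*$ to $\mathbf 1$; cancelling it gives $R^\dagger \circ T^\dagger = R^\dagger$, and taking adjoints gives $T \circ R = R$. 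Finally, since $R$ is reflexive we have $I_\X \leq R$, so monotonicity of composition yields $T = T \circ I_\X \leq T \circ R = R$, as desired.

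The computations are routine; the only genuine content is translating monotonicity of $\tilde P$ into the absorption equations through Proposition \ref{prop:upsets are monotone}, and the subsequent cancellation. The main (minor) obstacle is invoking the correct injectivity: the equation lives in $\qRel(\X \times \X^*, \mathbf 1)$, and one must recognize its two sides as the images of $R^\dagger \circ T^\dagger$ and $R^\dagger$ under the inner-hom bijection before cancelling $E_\X \circ (- \times I_{\X^*})$. It is worth emphasizing that the concluding step uses the reflexivity of $R$ rather than any property of $T$.
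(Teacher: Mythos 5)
Your proposal is correct and follows essentially the same route as the paper's proof: both claims are reduced to the absorption criterion of Proposition \ref{prop:upsets are monotone}, the absorption equations are unwound via the interchange law and $R^\dagger \circ R^\dagger = R^\dagger$, the equation $T \circ R = R$ is extracted by cancelling the inner-hom bijection $S \mapsto E_\X \circ (S \times I_{\X^*})$, and the conclusion $T \leq R$ follows from the reflexivity of $R$. Your additional remarks (checking that $R \times I_{\X^*}$ is an order before invoking the proposition, and naming the bijection being cancelled) make explicit two steps the paper leaves implicit, but the argument is the same.
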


\begin{proof}
We calculate that 
$P \circ (R \times I_{\X^*})^\dagger  = E_{\X} \circ (R^\dagger \times I_{\X^*}) \circ (R^\dagger \times I_{\X^*}) = E_{\X}  \circ (R^\dagger \times I_{\X^*})  = P$. Therefore, 
by Proposition \ref{prop:upsets are monotone}, $\tilde P$ is a monotone function $(\X \times \X^*, R \times I_{\X^*}) \to (`\BB, `{\below})$. Now, let $T$ be an order on $\X$, and assume that $\tilde P$ is a monotone function $(\X \times \X^*, T \times I_{\X^*}) \to (`\BB, `{\below})$. By Proposition \ref{prop:upsets are monotone}, we find that $P \circ (T \times I_{\X^*})^\dagger = P$. We now calculate that
\begin{align*}
E_{\X} \circ (R^\dagger \times I_{\X^*})
& =
P
=
P \circ (T \times I_{\X^*})^\dagger
=
E_\X \circ (R^\dagger \times I_{\X^*}) \circ (T \times I)^\dagger
=
E_\X \circ ((T \circ R)^\dagger \times I_{\X^*}).
\end{align*}
We conclude that $T \circ R = R$. Therefore $T = T \circ I_{\X} \leq T \circ R = R$, as claimed.
\end{proof}

\begin{theorem}\label{thm:embedding into the powerset}
Let $\X$ be a quantum set equipped with an order $R$. Let $\tilde P\: \X \times \X^* \to `\BB$ be the function defined by $`1^\dagger \circ \tilde P = E_{\X} \circ (R^\dagger \times I_{\X^*})$. Equip $\X^*$ with the trivial order $I_{\X^*}$. The unique monotone function $G$ that makes the following diagram in $\cat{qPOS}$ commute is an order embedding:
$$
\begin{tikzcd}
\X \times \X^*
\arrow{d}[swap]{G \times I_{\X^*}}
\arrow{rd}{\tilde P}
\\
{}[\X^*, `\BB]_\below \times \X^* 
\arrow{r}[swap]{\Eval_\below}
&
`\BB.
\end{tikzcd}
$$
\end{theorem}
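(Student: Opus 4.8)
The plan is to recognize $G$ as an order embedding by comparing $R$ with the pre-order $T := G^\dagger \circ Q \circ G$ that $G$ pulls back from the canonical order $Q$ on $[\X^*,`\BB]_\below$. First note that $G$ is monotone by construction: the first part of Lemma~\ref{lem:tilde P is monotone} says that $\tilde P$ is monotone $(\X \times \X^*, R \times I_{\X^*}) \to (`\BB, `{\below})$, so the universal property of Theorem~\ref{thm:qPOS is monoidal closed} yields a unique monotone $G\: (\X,R) \to ([\X^*,`\BB]_\below, Q)$ with $\Eval_\below \circ (G \times I_{\X^*}) = \tilde P$. Condition~(3) of Definition~\ref{def:monotone map} then gives $R \le G^\dagger \circ Q \circ G = T$, and Lemma~\ref{lem:pullback order} shows $T$ is a pre-order on $\X$. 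It therefore suffices to establish the reverse inequality $T \le R$, which upgrades monotonicity of $G$ to the embedding equation $R = G^\dagger \circ Q \circ G$.

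The key step is to show that $\tilde P$ remains monotone when $\X$ is re-ordered by $T$, i.e.\ that $\tilde P$ is a homomorphism $(\X \times \X^*, T \times I_{\X^*}) \to (`\BB, `{\below})$. Writing $\tilde P = \Eval_\below \circ (G \times I_{\X^*})$ and using that $G$ is a function, so $G \circ G^\dagger \le I$, I would compute
\[
G \circ T = (G \circ G^\dagger) \circ Q \circ G \le Q \circ G,
\]
and therefore
\begin{align*}
\tilde P \circ (T \times I_{\X^*})
&= \Eval_\below \circ ((G \circ T) \times I_{\X^*})
\le \Eval_\below \circ (Q \times I_{\X^*}) \circ (G \times I_{\X^*}) \\
&\le `{\below} \circ \Eval_\below \circ (G \times I_{\X^*})
= `{\below} \circ \tilde P,
\end{align*}
where the last inequality is the defining property~\eqref{ineq:Q def} of $Q$, instantiated at $(\X^*, I_{\X^*})$ and $(`\BB, `{\below})$. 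This is exactly the homomorphism inequality for $\tilde P$ relative to $T \times I_{\X^*}$.

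With this in hand I would invoke the maximality of $R$. The second part of Lemma~\ref{lem:tilde P is monotone} (via Proposition~\ref{prop:upsets are monotone}) turns monotonicity of $\tilde P$ relative to $T \times I_{\X^*}$ into $P \circ (T \times I_{\X^*})^\dagger = P$; unwinding this gives $E_\X \circ ((T \circ R)^\dagger \times I_{\X^*}) = E_\X \circ (R^\dagger \times I_{\X^*})$, and the inner-hom bijection $S \mapsto E_\X \circ (S \times I_{\X^*})$ forces $T \circ R = R$. Since $I_\X \le R$, right-composition yields $T = T \circ I_\X \le T \circ R = R$. Combined with $R \le T$ this gives $R = G^\dagger \circ Q \circ G$, so $G$ is an order embedding, as claimed.

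The one delicate point, which I expect to be the main obstacle to a clean write-up, is that Lemma~\ref{lem:tilde P is monotone} and Proposition~\ref{prop:upsets are monotone} are phrased for genuine orders, whereas $T$ is only known to be a pre-order at this stage. This is resolved by the observation that the trace computation underlying Proposition~\ref{prop:upsets are monotone}, and hence the maximality argument, never uses antisymmetry and so applies verbatim to the pre-order $T$. Alternatively, one may note that antisymmetry of $T$—equivalently, injectivity of $G$ by Lemma~\ref{lem:order embeddings are injective and monotone}—follows automatically once the equation $R = G^\dagger \circ Q \circ G$ is established, so invoking the order-only statement introduces no circularity.
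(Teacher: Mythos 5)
Your proof is correct, but it takes a genuinely different route from the paper's. The paper first proves that $G$ is \emph{injective} by a direct computation: from $G \circ F_1 = G \circ F_2$ it deduces $R^\dagger \circ F_1 = R^\dagger \circ F_2$, hence $F_1 \below F_2$ and $F_2 \below F_1$ in the order of Lemma \ref{lem:order between functions}, hence $F_1 = F_2$. Injectivity guarantees via Lemma \ref{lem:pullback order} that $T = G^\dagger \circ Q \circ G$ is an honest \emph{order}, so that Lemma \ref{lem:tilde P is monotone} applies exactly as stated, yielding $T \leq R$; combined with $R \leq T$ from monotonicity this gives $R = T$. You skip the injectivity argument entirely, accept that $T$ is only known to be a pre-order, and verify by hand (using $G \circ G^\dagger \leq I$ and inequality (\ref{ineq:Q def})) that $\tilde P$ is still a homomorphism for $T \times I_{\X^*}$ — a computation the paper obtains instead by composing monotone maps. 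What your route buys is economy: the injectivity of $G$ comes out for free at the end from $R = G^\dagger \circ Q \circ G$ and Lemma \ref{lem:order embeddings are injective and monotone}. What it costs is that you must re-examine the proofs of Proposition \ref{prop:upsets are monotone} and Lemma \ref{lem:tilde P is monotone} to confirm that only reflexivity of the relevant relation is used; your first resolution of this point is correct and suffices. Your second, ``alternative'' resolution, however, is circular as stated: antisymmetry of $T$ is deduced from the equation $R = G^\dagger \circ Q \circ G$, which is itself the conclusion you need the order-only lemma to reach, so it cannot be used to license that lemma's application. Since the first resolution stands on its own, the proof is sound; I would simply delete the second.
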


\begin{proof}
By Lemma \ref{lem:tilde P is monotone}, $\tilde P$ is monotone, so Theorem \ref{thm:qPOS is monoidal closed} guarantees the existence of such a monotone function $G$. We claim that $G$ is injective. Let $\W$ be a quantum set, and let $F_1$ and $F_2$ be functions $\W \to \X$. Assume that $G \circ F_1 = G \circ F_2$. We now reason as follows:
\begin{align*}
G \circ F_1 = G \circ F_2
& \IMP
`1^\dagger \circ \Eval_\below \circ (G \times I_{\X^*}) \circ (F_1 \times I_{\X^*}) = `1^\dagger \circ \Eval_\below \circ (G \times I_{\X^*}) \circ (F_2 \times I_{\X^*})
\\ &
\EV `1^\dag\circ \tilde P\circ (F_1\times I_{\X^*})=`1^\dag\circ \tilde P\circ (F_2\times I_{\X^*}) 
\\ & \EV
E_\X \circ (R^\dagger \times I_{\X^*}) \circ (F_1 \times I_{\X^*}) = E_{\X} \circ (R^\dagger \times I_{\X^*}) \circ (F_2\times I_{\X^*})
\\ & \EV
R^\dagger \circ F_1 = R^\dagger \circ F_2
\EV
F_1 \below F_2 \text{ and } F_2 \below F_1
\\ & \EV F_1 = F_2.
\end{align*}
We conclude that $G$ is monic in $\cat{qSet}$, and it is therefore injective \cite{Kornell2}*{Proposition 8.4}.

Let $Q$ be the binary relation that orders $[\X^*, `\BB]_\below$.
Since $G$ is injective, the binary relation $T = G^\dagger \circ Q\circ G$ is an order on $\X$ (Lemma \ref{lem:pullback order}). The function $G$ is then an order embedding $(\X, T) \to ([\X^*, `\BB]_\below, Q)$, and in particular, it is monotone. Hence, $\tilde P$ is a monotone function $(\X \times \X^*, T \times I_{\X^*}) \to (`\BB, `{\below})$. By Lemma \ref{lem:tilde P is monotone}, $T \leq R$. In other words, $G^\dagger \circ Q \circ G \leq R$. However, since $G$ is monotone, we also have $G^\dagger \circ Q\circ G \geq R$. Therefore, $G^\dagger \circ Q \circ G = R$; in other words, $G$ is an order embedding $(\X, R) \to ([\X^*, `\BB]_\below, Q)$.
\end{proof}

The quantum poset $[\X^*, `\BB]_\below$ is nothing but $\qPow(\X)$ equipped with its canonical order. Thus, $G$ is an order embedding $\X \to \qPow(\X)$.

\appendix

\section{Quantum sets} We record a number of basic facts about quantum sets and the binary relations between them in the sense of \cite{Kornell2}, which serves as our basic reference.

\begin{lemma}\label{lem:union}\label{lem:intersection}
Let $\W$, $\X$, $\Y$ and $\Z$ be quantum sets, let $R\in\qRel(\W,\X)$, let $\{S_i\}_{i\in I}\subseteq\qRel(\X,\Y)$ and let $T \in\qRel(\Y,\Z)$. Then,
\[ \bigvee_{i\in I}(S_i\circ R)=\left(\bigvee_{i\in I}S_i\right)\circ R, \qquad \bigvee_{i\in I}(T\circ S_i)=T\circ \left(\bigvee_{i\in I}S_i\right),\]
\[ \bigwedge_{i\in I}(S_i\circ R)\geq\left(\bigwedge_{i\in I}S_i\right)\circ R,\qquad  \bigwedge_{i\in I}(T\circ S_i)\geq T\circ \left(\bigwedge_{i\in I}S_i\right).\]
\end{lemma}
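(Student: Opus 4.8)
The plan is to reduce everything to the explicit, atomwise description of composition, joins, and meets in $\qRel$. Recall that for $R \in \qRel(\W,\X)$ and $S \in \qRel(\X,\Y)$ the composite is given atom by atom by $(S \circ R)(W,Y) = \bigvee_{X \atomof \X} S(X,Y) \cdot R(W,X)$, where $V \cdot U$ denotes the linear span of the operator products $\{vu : v \in V,\, u \in U\}$; that joins are computed atomwise as $\bigl(\bigvee_i S_i\bigr)(X,Y) = \bigvee_i S_i(X,Y)$; and that meets are computed atomwise as $\bigl(\bigwedge_i S_i\bigr)(X,Y) = \bigcap_i S_i(X,Y)$. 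Because all atoms are finite-dimensional, every subspace is closed, so no topological subtleties arise and all joins are ordinary linear spans.

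First I would treat the two join identities, which are genuine equalities. Fixing atoms $W$ and $Y$ and expanding both sides of $\bigvee_i (S_i \circ R) = \bigl(\bigvee_i S_i\bigr) \circ R$ atomwise, the left-hand side becomes $\bigvee_i \bigvee_{X} S_i(X,Y) \cdot R(W,X)$ and the right-hand side becomes $\bigvee_{X} \bigl(\bigvee_i S_i(X,Y)\bigr) \cdot R(W,X)$. These agree by two facts: joins in the complete lattice of subspaces of the fixed finite-dimensional space $L(W,Y)$ may be regrouped and reordered freely, and the product operation distributes over joins, i.e.\ $\bigl(\bigvee_i V_i\bigr) \cdot U = \bigvee_i (V_i \cdot U)$. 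The latter is immediate: the inclusion $\supseteq$ follows from monotonicity of $\cdot$, while $\subseteq$ holds because any $v \in \bigvee_i V_i$ is a finite linear combination of elements of the $V_i$, so $vu$ lies in $\bigvee_i (V_i \cdot U)$. The second identity $\bigvee_i (T \circ S_i) = T \circ \bigl(\bigvee_i S_i\bigr)$ follows by the mirror-image argument using distributivity of $\cdot$ in its left argument. Equivalently, both identities simply record that $\qRel$ is enriched over supremum-lattices, as noted in the introduction.

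For the two meet inequalities I would invoke monotonicity of composition, which is an immediate consequence of the join identities just established (a map preserving arbitrary joins is monotone). Since $\bigwedge_i S_i \leq S_j$ for every $j$, post-composing with $R$ gives $\bigl(\bigwedge_i S_i\bigr) \circ R \leq S_j \circ R$ for every $j$, and taking the meet over $j$ yields $\bigl(\bigwedge_i S_i\bigr) \circ R \leq \bigwedge_j (S_j \circ R)$. The remaining inequality $T \circ \bigl(\bigwedge_i S_i\bigr) \leq \bigwedge_i (T \circ S_i)$ follows identically.

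The calculation has no real obstacle: the only point requiring care is the distributive law $\bigl(\bigvee_i V_i\bigr) \cdot U = \bigvee_i (V_i \cdot U)$, and even this is routine once one observes that finite-dimensionality removes any need to track norm-closures. It is worth remarking explicitly why the meet cases yield only inequalities: the product operation does not distribute over intersections, so $\bigl(\bigcap_i V_i\bigr) \cdot U$ can be strictly smaller than $\bigcap_i (V_i \cdot U)$, and no reverse inequality is available in general.
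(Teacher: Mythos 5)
Your proof is correct. For the two join identities your argument is essentially the paper's: expand both sides atom by atom using $(S\circ R)(W,Y)=\bigvee_{X\atomof\X}S(X,Y)\cdot R(W,X)$ and the fact that the span-of-products operation distributes over arbitrary joins of subspaces. For the two meet inequalities, however, you take a genuinely different (and cleaner) route. The paper proves the last formula by a direct atomwise computation, exchanging the meet over $i$ with the join over $Y\atomof\Y$ via the general inequality $\bigwedge_i\bigvee_Y(\cdots)\geq\bigvee_Y\bigwedge_i(\cdots)$ and then using sub-distributivity of the product over intersections; you instead deduce monotonicity of composition from the join identities already established and then obtain $\left(\bigwedge_i S_i\right)\circ R\leq S_j\circ R$ for each $j$, taking the meet over $j$ at the end. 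Your version is more abstract and makes transparent that the meet inequalities hold in any quantaloid-like setting where composition preserves joins, whereas the paper's computation stays entirely inside the lattice of subspaces of a fixed $L(X,Z)$; both are valid, and your closing remark correctly identifies why equality fails in general for meets.
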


\begin{proof}
We prove the last formula. For all atoms $X \atomof \X$ and $Z \atomof \Z$, we calculate that
\begin{align*}
&\left(\bigwedge_{i\in I}(T\circ S_i)\right)(X,Z)
 =
\bigwedge_{i\in I} (T\circ S_i)(X,Z)
 = 
\bigwedge_{i\in I} \bigvee_{Y \atomof \Y} (T(Y,Z)\cdot S_i(X,Y))
\\ & \geq 
\bigvee_{Y \atomof \Y} \bigwedge_{i\in I} (T(Y,Z)\cdot S_i(X,Y))
 \geq
\bigvee_{Y \atomof \Y}   \left (T(Y,Z)  \cdot  \bigwedge_{i\in I}S_i(X,Y)\right)
 =
\left( T \circ \bigwedge_{i\in I}S_i\right)(X,Z).
\end{align*}
The other three formulas are proved similarly.
\end{proof}

\begin{lemma}\label{lem:action of dagger}
Let $\X_1$ and $\X_2$ be quantum sets, and let $R$, $S$, $\{T_i\}_{i\in I}$ be binary relations from $\X_1$ to $\X_2$. Then,
\begin{itemize}
    \item[(1)] $R\leq S$ if and only if $R^\dag\leq S^\dag$;
    \item[(2)] $\left(\bigwedge_{i\in I}T_i\right)^\dag=\bigwedge T_i^\dag$;
    \item[(3)] $\left(\bigvee_{i\in I}T_i\right)^\dag=\bigvee T_i^\dag$;
    \item[(4)] $(\neg S)^{\dag}=\neg (S^{\dag})$.
\end{itemize}
\end{lemma}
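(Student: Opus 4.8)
The plan is to reduce all four claims to the atom-by-atom level. In $\qRel$ the order $\leq$, the meet $\wedge$, the join $\vee$, the orthocomplement $\neg$, and the dagger $\dagger$ are all computed component-wise: for atoms $X_1\atomof\X_1$ and $X_2\atomof\X_2$ one has $(\neg S)(X_1,X_2)=S(X_1,X_2)^\perp$, $R^\dagger(X_2,X_1)=R(X_1,X_2)^\dagger$, and likewise for meets and joins, where $V^\dagger:=\{v^\dagger:v\in V\}$ for a subspace $V\subseteq L(X_1,X_2)$. Hence it suffices to prove the analogous statements for the adjoint map $V\mapsto V^\dagger$ on the subspace lattice of $L(X_1,X_2)$ and then quantify over all pairs of atoms.

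First I would dispatch (1). The map $V\mapsto V^\dagger$ is a conjugate-linear bijection from the subspaces of $L(X_1,X_2)$ onto those of $L(X_2,X_1)$, it is its own inverse since $(V^\dagger)^\dagger=V$, and it manifestly preserves and reflects inclusion: $V\subseteq W$ iff $V^\dagger\subseteq W^\dagger$. Quantifying over atoms gives $R\leq S$ iff $R^\dagger\leq S^\dagger$, which is (1). In particular $\dagger$ is an order isomorphism between the complete lattices $\qRel(\X_1,\X_2)$ and $\qRel(\X_2,\X_1)$. Since any order isomorphism between complete lattices carries arbitrary meets to meets and arbitrary joins to joins, claims (2) and (3) follow immediately from (1); one could instead verify them directly atom-by-atom, but the lattice-theoretic derivation is cleaner.

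The only genuinely computational step, and the one I expect to be the main obstacle, is (4), because it is the single place where the inner-product structure enters. Component-wise I must show $(V^\perp)^\dagger=(V^\dagger)^\perp$ for a subspace $V\subseteq L(X_1,X_2)$, the orthocomplements being taken with respect to the Hilbert--Schmidt inner product $\langle a,b\rangle=\Tr(a^\dagger b)$. The key identity is $\Tr(v\,w^\dagger)=\overline{\Tr(v^\dagger w)}$, which follows from cyclicity of the trace together with $\Tr(x^\dagger)=\overline{\Tr(x)}$, via $\overline{\Tr(v^\dagger w)}=\Tr((v^\dagger w)^\dagger)=\Tr(w^\dagger v)=\Tr(v\,w^\dagger)$. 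Consequently, for $w\in L(X_1,X_2)$, the condition $\Tr(v^\dagger w)=0$ for all $v\in V$ is equivalent to $\Tr(v\,w^\dagger)=0$ for all $v\in V$, that is, $w\in V^\perp$ iff $w^\dagger\in(V^\dagger)^\perp$. Since $w\mapsto w^\dagger$ is a bijection $L(X_1,X_2)\to L(X_2,X_1)$, this yields $(V^\perp)^\dagger=(V^\dagger)^\perp$. Quantifying over atoms gives (4), completing the proof.
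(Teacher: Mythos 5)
Your proposal is correct and follows essentially the same route as the paper: the paper's proof simply reduces each claim to the corresponding fact for subspaces of $L(X_1,X_2)$ by fixing arbitrary atoms $X_1 \atomof \X_1$, $X_2 \atomof \X_2$, which is exactly your component-wise reduction. The details you supply (the order-isomorphism shortcut for (2) and (3), and the Hilbert--Schmidt trace identity for (4)) are sound and merely fill in what the paper leaves implicit.
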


\begin{proof}
In each case, the proof proceeds by fixing arbitrary atoms $X_1 \atomof \X_1$  and $X_2 \atomof \X_2$ and then verifying the equivalence or equality in question in the $(X_1, X_2)$-component, appealing to the same equivalence or equality for arbitrary subspaces of $L(X_1, X_2)$.
\end{proof}

\begin{lemma}\label{lem:product distributes over joins}
Let $\X_1$, $\X_2$, $\Y_1$ and $\Y_2$ be quantum sets. Let $R\in\qRel(\X_1,\Y_1)$, and $S\in\qRel(\X_2,\Y_2)$. Let $\{R_\alpha\}_{\alpha\in A}$ be an indexed family in $\qRel(\X_1,\Y_1)$, and let $\{S_\beta\}_{\beta\in B}$ be an indexed family in $\qRel(\X_2,\Y_2)$. Then,
\begin{itemize}
    \item[(a)] $(R\times S)^\dag=R^\dag\times S^\dag$;
    \item[(b)] $\left(\bigwedge_{\alpha\in A}R_\alpha\right)\times \left(\bigwedge_{\beta\in B}S_\beta\right)=\bigwedge_{\alpha\in A}\bigwedge_{\beta\in B}(R_\alpha\times S_\beta)$;
    \item[(c)] $\left(\bigvee_{\alpha\in A}R_\alpha\right)\times S=\bigvee_{\alpha\in A}(R_\alpha\times S)$. 
\end{itemize}
\end{lemma}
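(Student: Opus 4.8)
The plan is to prove all three identities atom by atom, reducing each to a statement about subspaces of finite-dimensional operator spaces. Fix atoms $X_1 \atomof \X_1$, $X_2 \atomof \X_2$, $Y_1 \atomof \Y_1$, and $Y_2 \atomof \Y_2$. Under the canonical isomorphism $L(X_1 \otimes X_2,\, Y_1 \otimes Y_2) \iso L(X_1, Y_1) \otimes L(X_2, Y_2)$, the product $R \times S$ has $(X_1 \otimes X_2,\, Y_1 \otimes Y_2)$-component $R(X_1, Y_1) \otimes S(X_2, Y_2)$, and the operations $(-)^\dag$, $\bigwedge$, and $\bigvee$ all act componentwise as the adjoint, the intersection, and the span of the union of subspaces, respectively. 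Thus each of (a), (b), (c) reduces to an identity of subspaces of $L(X_1, Y_1) \otimes L(X_2, Y_2)$, and it suffices to establish these.

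Parts (a) and (c) are routine. For (a) I would invoke $(v \otimes w)^\dag = v^\dag \otimes w^\dag$ for operators: since the adjoint is a conjugate-linear bijection and $V \otimes W$ is spanned by the elementary tensors $v \otimes w$ with $v \in V$, $w \in W$, its image under the adjoint is exactly $V^\dag \otimes W^\dag$, giving $(V \otimes W)^\dag = V^\dag \otimes W^\dag$. For (c) I would use that the tensor product distributes over arbitrary sums of subspaces in each variable, $\big(\bigvee_\alpha V_\alpha\big) \otimes W = \bigvee_\alpha (V_\alpha \otimes W)$, which is immediate by checking on spanning sets.

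The substance of the lemma, and the step I expect to be the main obstacle, is part (b). The inclusion $\leq$ is immediate from the monotonicity of $\otimes$ in both arguments, but the reverse inclusion must be argued, since the meet does not distribute over $\otimes$ as transparently as the join does. The key auxiliary fact is that tensoring with a fixed subspace preserves arbitrary intersections, namely $\big(\bigwedge_\alpha V_\alpha\big) \otimes W = \bigwedge_\alpha (V_\alpha \otimes W)$ together with its mirror image in the second variable; this is the flatness of vector spaces. Granting it, part (b) follows by applying it twice:
$$\bigwedge_{\alpha,\beta}(V_\alpha \otimes W_\beta) = \bigwedge_\beta\bigwedge_\alpha (V_\alpha \otimes W_\beta) = \bigwedge_\beta\Big(\Big(\bigwedge_\alpha V_\alpha\Big) \otimes W_\beta\Big) = \Big(\bigwedge_\alpha V_\alpha\Big) \otimes \Big(\bigwedge_\beta W_\beta\Big).$$
To prove the auxiliary fact I would fix a basis $\{w_j\}$ of $W$ and use the characterization that, writing an ambient element uniquely as $x = \sum_j u_j \otimes w_j$ with $u_j \in L(X_1, Y_1)$, one has $x \in V \otimes W$ if and only if every coefficient $u_j$ lies in $V$. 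An element therefore lies in $V_\alpha \otimes W$ for all $\alpha$ exactly when every $u_j$ lies in $\bigwedge_\alpha V_\alpha$, which says precisely that it lies in $\big(\bigwedge_\alpha V_\alpha\big) \otimes W$. Finite-dimensionality of the operator spaces lets me replace any intersection by a finite subintersection, so no limiting argument is required.
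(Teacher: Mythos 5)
Your proof is correct and follows essentially the same route as the paper's, which simply says to verify each identity in the $(X_1\otimes X_2, Y_1\otimes Y_2)$-component by appealing to the corresponding identity for arbitrary subspaces of $L(X_1,Y_1)$ and $L(X_2,Y_2)$. You additionally supply the subspace-level arguments that the paper leaves implicit, and your coordinate (flatness) argument for the intersection identity in part (b) is the right one.
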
 
\begin{proof}
In each case, the proof proceeds by fixing arbitrary atoms $X_1 \atomof \X_1$, $X_2 \atomof \X_2$, $Y_1 \atomof \Y_1$ and $Y_2 \atomof \Y_2$ and then verifying the equality in question in the $(X_1 \tensor X_2, Y_1 \tensor Y_2)$-component, appealing to the same equality for arbitrary subspaces of $L(X_1 , Y_1)$ and $L(X_2, Y_2)$.
\end{proof}

The next lemma concerns inclusion functions \cite{Kornell2}*{Definition 8.2}. For each atom $X$ of a quantum set $\X$ we abbreviate $J_X := J^\X_{\Q\{X\}}$ \cite{Kornell2}*{Definition 2.3}. The function $J_X\: \Q\{X\} \to \X$ is defined by $J_X(X,X) = \CC \cdot 1_X$, with all other components vanishing.

\begin{lemma}
Let $\X$ be a quantum set. Then $I_\X=\bigvee_{X\atomof\X}J_X\circ J_X^\dag$.
\end{lemma}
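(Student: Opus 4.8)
The plan is to verify the equality component by component, since the join of binary relations, composition, and the dagger are all computed atom by atom. Fix atoms $A, B \atomof \X$; it then suffices to show that $\left(\bigvee_{X\atomof\X} J_X\circ J_X^\dag\right)(A,B) = I_\X(A,B)$.

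First I would compute $J_X\circ J_X^\dag$ for a single atom $X\atomof\X$. The quantum set $\Q\{X\}$ has $X$ as its only atom, and the inclusion satisfies $J_X(X,Y) = \CC\,1_X$ when $Y = X$ and $J_X(X,Y) = 0$ otherwise. Taking daggers gives $J_X^\dag(Y,X) = J_X(X,Y)^\dag$, which equals $\CC\,1_X$ when $Y = X$ and vanishes otherwise. Because the intermediate quantum set $\Q\{X\}$ has only the single atom $X$, the definition of composition collapses to the single-term expression
\[(J_X\circ J_X^\dag)(A,B) = J_X(X,B)\cdot J_X^\dag(A,X),\]
which equals $\CC\,1_X$ precisely when $A = B = X$ and vanishes in all other cases.

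Finally I would take the join over all atoms $X\atomof\X$. Joins of binary relations are computed component-wise, so
\[\left(\bigvee_{X\atomof\X} J_X\circ J_X^\dag\right)(A,B) = \bigvee_{X\atomof\X}(J_X\circ J_X^\dag)(A,B).\]
When $A = B$, the only nonvanishing summand is the one indexed by $X = A$, yielding $\CC\,1_A$; when $A \neq B$, every summand vanishes, yielding $0$. This is exactly $I_\X(A,B)$, which completes the verification. I expect no genuine obstacle here: the whole argument is a routine unwinding of the definitions, and the only point requiring a moment's care is the collapse of the composition to a single term, which is forced by $\Q\{X\}$ having just one atom.
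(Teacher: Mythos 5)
Your proof is correct and follows essentially the same route as the paper's: both verify that $(J_X\circ J_X^\dag)(A,B)$ equals $\CC\,1_X$ exactly when $A=B=X$ and vanishes otherwise, and then take the componentwise join. You simply spell out the single-term collapse of the composition and the final join step, which the paper's one-line proof leaves implicit.
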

\begin{proof}
It is sufficient to observe that for all $X \atomof \X$, we have that $(J_X \circ J_X^\dagger)(X,X) = \CC \cdot 1_X$, with all the other components of $J_X \circ J_X^\dagger$ vanishing.
\end{proof}

The next lemma refers subsets of quantum sets \cite{Kornell2}*{Definition 2.2(3)}. A quantum set $\X_0$ is said to be subset of a quantum set $\X$ if each atom of $\X_0$ is also an atom of $\X$.

\begin{lemma}\label{lem:restriction and corestriction}
Let $\X$ and $\Y$ be quantum sets, let $\X_0 \subsetof \X$, let $\Y_0 \subsetof \Y$, and let $R\in\qRel(\X,\Y)$. Then, $(R \circ J_{X_0})(X,Y) = R(X,Y)$ for all atoms $X \atomof \X_0$ and $Y \atomof \Y$, and $(J_{Y_0} \circ R)(X,Y) = R(X,Y)$ for all atoms $X \atomof \X$ and $Y \atomof \Y_0$.
\end{lemma}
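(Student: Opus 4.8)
The plan is to establish both identities by direct componentwise computation in $\qRel$, relying only on the formula for composition of binary relations and the explicit description of the canonical inclusion. Recall that for $S \in \qRel(\X, \Y)$ and $T \in \qRel(\Y, \Z)$ one has $(T \circ S)(X, Z) = \bigvee_{Y \atomof \Y} T(Y, Z) \cdot S(X, Y)$, where $\cdot$ denotes the subspace spanned by the operator composites, and that the inclusion $J_{\X_0}\: \X_0 \to \X$ of a subset is the binary relation whose only nonvanishing components are $J_{\X_0}(X, X) = \CC 1_X$ for atoms $X \atomof \X_0$ (cf. \cite{Kornell2}*{Definition 8.2}).

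For the \emph{restriction} identity, I would fix atoms $X \atomof \X_0$ and $Y \atomof \Y$ and expand
\[
(R \circ J_{\X_0})(X, Y) = \bigvee_{X' \atomof \X} R(X', Y) \cdot J_{\X_0}(X, X').
\]
Since $J_{\X_0}(X, X')$ vanishes unless $X' = X$, in which case it equals $\CC 1_X$, the join collapses to the single term $R(X, Y) \cdot \CC 1_X$. Right multiplication by the identity operator $1_X$ fixes every element of $R(X, Y)$, so this subspace is exactly $R(X, Y)$, as claimed.

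For the \emph{corestriction} identity I would argue symmetrically, using the adjoint inclusion $J_{\Y_0}^\dagger\: \Y \to \Y_0$ and postcomposition. Taking daggers of the components of $J_{\Y_0}$ shows that the only nonvanishing components of $J_{\Y_0}^\dagger$ are $J_{\Y_0}^\dagger(Y, Y) = \CC 1_Y$ for atoms $Y \atomof \Y_0$. Fixing atoms $X \atomof \X$ and $Y \atomof \Y_0$ and expanding
\[
(J_{\Y_0}^\dagger \circ R)(X, Y) = \bigvee_{Y' \atomof \Y} J_{\Y_0}^\dagger(Y', Y) \cdot R(X, Y'),
\]
the join again collapses to the single surviving term $\CC 1_Y \cdot R(X, Y)$, which equals $R(X, Y)$ because left multiplication by $1_Y$ is the identity.

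There is no genuine obstacle here; the content is pure bookkeeping. The only point that warrants a moment of care is the side on which the identity operator appears: because $A \cdot B$ is spanned by composites $a b$ (first $b$, then $a$), the factor $\CC 1_X$ sits on the right in the restriction $R \circ J_{\X_0}$ and the factor $\CC 1_Y$ sits on the left in the corestriction $J_{\Y_0}^\dagger \circ R$, and in each case it acts as an identity on the relevant space of operators. Once the composition convention is fixed, both computations are immediate, matching the uniform atom-by-atom style used for the neighbouring lemmas in this appendix.
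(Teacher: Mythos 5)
Your proof is correct and is precisely the direct componentwise computation the paper intends; its own proof simply says that both equalities follow easily by direct computation. Your reading of the second identity as $(J_{\Y_0}^\dagger \circ R)(X,Y) = R(X,Y)$ --- supplying the dagger that the statement omits so that the composite typechecks --- is the right one, and it matches how the lemma is actually invoked elsewhere in the paper (e.g.\ as $K_\beta^\dagger \circ R \circ J_\alpha$ in the proof of Lemma \ref{lem:coproduct order}).
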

\begin{proof}
Both equalities follow easily by direct computation.
\end{proof}

\begin{proposition}\label{prop:intersection}
Let $\W$, $\X$, $\Y$ and $\Z$ be quantum sets, and let $\{S_i\}_{i\in I}\subseteq\qRel(\X,\Y)$. Then, for all functions $F\: \W \to \X$ and $G\: \Z \to \Y$, we have
$\bigwedge_{i\in I}(S_i\circ F)=\left(\bigwedge_{i\in I}S_i\right)\circ F$
and 
$\bigwedge_{i\in I}(G^\dag\circ S_i)=G^\dag\circ\left(\bigwedge_{i\in I}S_i\right).$
\end{proposition}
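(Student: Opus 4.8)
The plan is to obtain one inequality for free and to establish the reverse using the two defining properties of a function. Lemma~\ref{lem:intersection} already provides, for an arbitrary binary relation, the bounds $\left(\bigwedge_i S_i\right)\circ F \leq \bigwedge_i(S_i\circ F)$ and $G^\dag\circ\left(\bigwedge_i S_i\right)\leq \bigwedge_i(G^\dag\circ S_i)$. So it remains only to prove the opposite inequalities. The crucial extra input is that a function $F\:\W\to\X$ satisfies both $I_\W\leq F^\dag\circ F$ (totality) and $F\circ F^\dag\leq I_\X$ (single-valuedness), and likewise for $G$; the whole point is to deploy these two axioms in the right places so that the otherwise-lossy inequality of Lemma~\ref{lem:intersection} becomes tight.

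For the first equation, I would argue as follows. Since $I_\W\leq F^\dag\circ F$, monotonicity of composition gives
\[\bigwedge_i(S_i\circ F)=\left(\bigwedge_i(S_i\circ F)\right)\circ I_\W\leq\left(\bigwedge_i(S_i\circ F)\right)\circ F^\dag\circ F.\]
Next I would bound the inner factor: the free direction of Lemma~\ref{lem:intersection}, applied to the family $\{S_i\circ F\}$ composed with $F^\dag$, yields $\left(\bigwedge_i(S_i\circ F)\right)\circ F^\dag\leq\bigwedge_i(S_i\circ F\circ F^\dag)$, and then single-valuedness $F\circ F^\dag\leq I_\X$ with monotonicity of composition and of meets gives $\bigwedge_i(S_i\circ F\circ F^\dag)\leq\bigwedge_i S_i$. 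Composing this chain on the right by $F$ produces
\[\bigwedge_i(S_i\circ F)\leq\left(\bigwedge_i(S_i\circ F)\right)\circ F^\dag\circ F\leq\left(\bigwedge_i S_i\right)\circ F,\]
which is exactly the missing inequality. Together with the free direction, this establishes the first equality.

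For the second equation, the cleanest route is to reduce it to the first by taking adjoints. Using Lemma~\ref{lem:action of dagger}, I have $\left(\bigwedge_i(G^\dag\circ S_i)\right)^\dag=\bigwedge_i(S_i^\dag\circ G)$, and $G$ is a function $\Z\to\Y$, so the first equality—applied now to the relations $S_i^\dag$ and the function $G$—gives $\bigwedge_i(S_i^\dag\circ G)=\left(\bigwedge_i S_i^\dag\right)\circ G=\left(\bigwedge_i S_i\right)^\dag\circ G$. Taking adjoints once more and invoking Lemma~\ref{lem:action of dagger} delivers $\bigwedge_i(G^\dag\circ S_i)=G^\dag\circ\left(\bigwedge_i S_i\right)$, as desired. (Alternatively, one can run the mirror-image sandwich argument directly, inserting $G^\dag\circ G$ on the left via $I_\Z\leq G^\dag\circ G$ and absorbing $G\circ G^\dag$ via $G\circ G^\dag\leq I_\Y$.)

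There is no real obstacle beyond the bookkeeping; the one thing to get right is that both function axioms are genuinely needed and must be used in the correct order—totality to insert $F^\dag\circ F$ against the target relation, and single-valuedness to absorb $F\circ F^\dag$ into the identity—so that the two applications of Lemma~\ref{lem:intersection} close up into an equality.
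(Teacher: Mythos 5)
Your proposal is correct and follows essentially the same route as the paper's own proof: both obtain one inequality from Lemma~\ref{lem:intersection}, prove the reverse via the sandwich $\bigwedge_i(S_i\circ F)\leq\bigl(\bigwedge_i(S_i\circ F)\bigr)\circ F^\dag\circ F\leq\bigl(\bigwedge_i(S_i\circ F\circ F^\dag)\bigr)\circ F\leq\bigl(\bigwedge_i S_i\bigr)\circ F$ using totality and single-valuedness of $F$, and then deduce the second identity by taking adjoints.
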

\begin{proof}
The inequality $\bigwedge_{i\in I}(S_i\circ F) \geq \left(\bigwedge_{i\in I}S_i\right)\circ F$ follows from Lemma \ref{lem:intersection}, as does the inequality $\bigwedge_{i\in I}(S_i\circ F \circ F^\dagger) \geq \left(\bigwedge_{i\in I}(S_i\circ F\right))\circ F^\dagger$. We apply the latter inequality in the following calculation:
\[   \bigwedge_{i\in I}(S_i\circ F)\leq\left(\bigwedge_{i\in I}(S_i\circ F)\right)\circ F^\dag\circ F\leq\left(\bigwedge_{i\in I}(S_i\circ F\circ F^\dag)\right)\circ F\leq\left(\bigwedge_{i\in I}S_i\right)\circ F.\]
Thus, we establish the first equality of the proposition. The second equality can be obtained from the first by taking its adjoint and replacing $S_i$ and $F$ by $S_i^\dag$ and $G$, respectively.
\end{proof}

\begin{lemma}\label{lem:inequality between functions is equality}
Let $\X$ and $\Y$ be quantum sets, and let $F$ and $G$ be functions $\X\to\Y$. If $F\leq G$, then $F=G$.
\end{lemma}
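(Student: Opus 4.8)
The statement to prove is that an inequality between two functions $F, G\: \X \to \Y$ forces equality, which is the quantum analogue of the classical fact that a total single-valued relation contained in another is equal to it. The plan is to use the two defining inequalities of a function: that $F$ is \emph{total}, expressed by $I_\X \leq F^\dag \circ F$, and that $G$ is \emph{single-valued}, expressed by $G \circ G^\dag \leq I_\Y$. Given the hypothesis $F \leq G$, I would show the reverse inequality $G \leq F$ and then conclude $F = G$ by the antisymmetry of $\leq$ on binary relations.

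First I would record that $F \leq G$ implies $F^\dag \leq G^\dag$, by Lemma \ref{lem:action of dagger}(1). The core of the argument is then the chain
\[
G = G \circ I_\X \leq G \circ F^\dag \circ F \leq I_\Y \circ F = F,
\]
read as follows. The first equality is the identity law, and the first inequality uses the totality of $F$, namely $I_\X \leq F^\dag \circ F$, together with the monotonicity of composition. For the middle inequality I would bound the inner factor: since $F^\dag \leq G^\dag$ and $G$ is single-valued, we have
\[
G \circ F^\dag \leq G \circ G^\dag \leq I_\Y,
\]
and composing on the right with $F$ and appealing again to monotonicity of composition gives $G \circ F^\dag \circ F \leq I_\Y \circ F = F$. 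This establishes $G \leq F$, and combined with the hypothesis $F \leq G$ we obtain $F = G$.

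There is essentially no technical obstacle here; the only thing to get right is which of the two function axioms to apply at each step. The delicate point is that one must bound $G \circ F^\dag$ from above (using single-valuedness of $G$, after replacing $F^\dag$ by the larger $G^\dag$) while simultaneously bounding $G$ from below (using totality of $F$), so that the two estimates chain together in the correct direction. Every inequality used is just monotonicity of composition, which follows from the fact that composition preserves joins (Lemma \ref{lem:union}), so no further lemmas are needed.
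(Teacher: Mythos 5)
Your proof is correct and is essentially identical to the paper's: both establish $G = G\circ I_\X \leq G\circ F^\dag\circ F \leq G\circ G^\dag\circ F \leq I_\Y\circ F = F$ using totality of $F$, the implication $F\leq G \Rightarrow F^\dag\leq G^\dag$, and single-valuedness of $G$. No differences worth noting.
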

\begin{proof}
Since we also have that $F^\dag\leq G^\dag$, we find that
$G=G\circ I_\X \leq G\circ F^\dag\circ F\leq G\circ G^\dag\circ F\leq I_\Y \circ F=F,$
whence $F=G$.
\end{proof}

\section{Projection functions}\label{sub:projection functions} The monoidal product of quantum sets generalizes the ordinary Cartesian product in the sense that we have a natural isomorphism $`S \times `T \iso `(S \times T)$ for all sets $S$ and $T$. Furthermore, for all quantum sets $\X$ and $\Y$, we have projection functions $P_{\X \times \Y}^\X\: \X \times \Y \to \X$ and $P_{\X \times \Y}^\Y \: \X \times \Y \to \Y$ \cite{Kornell2}*{Section 10}. Explicitly, $P_{\X \times \Y}^\X$ and $P_{\X \times \Y}^\Y$ are defined by the equations $P_{\X \times \Y}^\X(X\otimes Y,X)  = \rho_X(\CC 1_X\otimes L(Y, \CC))$ and $P_{\X \times \Y}^\Y(X\otimes Y,Y)  =  \lambda_Y(L(X, \CC)\otimes\CC 1_Y)$, for $X \atomof \X$ and $\Y \atomof \Y$, with the other components vanishing, where where $\lambda$ and $\rho$ denote the left and right unitors in $\FdHilb$. For brevity, we will sometimes write $P^\X = P_{\X \times \Y}^\X$ and $P^\Y = P_{\X \times \Y}^\Y$.

\begin{lemma}\label{lem:projection maps are surjective}
Let $\X$ and $\Y$ be nonempty quantum sets, and let $P^\X:\X\times \Y\to\X$ and $P^\Y:\X\times\Y\to\Y$ be the two projection functions. Then, $P^\X$ and $P^\Y$ are surjective.
\end{lemma}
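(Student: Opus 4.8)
The plan is to establish surjectivity directly from the explicit description of the projection functions in Appendix \ref{sub:projection functions}, using the characterization of surjectivity afforded by Definition \ref{def:range of a function}: a function $F$ is surjective exactly when $\ran F$ equals its codomain. Equivalently, one could verify $P^\X \circ (P^\X)^\dag = I_\X$, as in the surjectivity criterion used in the proof of Lemma \ref{lem:monoidal closure lemma}, but the range computation is cleaner here because the nonvanishing components of $P^\X$ are given explicitly.

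First I would recall that, by definition, $\ran P^\X = \Q\{X_0 \atomof \X : P^\X(A, X_0) \neq 0 \text{ for some } A \atomof \X \times \Y\}$, and that the only possibly nonvanishing components of $P^\X$ are those of the form $P^\X(X \otimes Y, X) = \rho_X(\CC 1_X \otimes L(Y, \CC))$, for $X \atomof \X$ and $Y \atomof \Y$. Now fix an arbitrary atom $X \atomof \X$. Since $\Y$ is nonempty, there is at least one atom $Y \atomof \Y$; as atoms are nonzero finite-dimensional Hilbert spaces, we have $1_X \neq 0$ and $L(Y, \CC) \neq 0$, so $\CC 1_X \otimes L(Y, \CC) \neq 0$, and since $\rho_X$ is a linear isomorphism it follows that $P^\X(X \otimes Y, X) \neq 0$. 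Hence $X \in \ran P^\X$, and since $X$ was arbitrary we conclude $\ran P^\X = \X$, i.e., $P^\X$ is surjective.

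The argument for $P^\Y$ is entirely symmetric: one fixes an atom $Y \atomof \Y$, uses the nonemptiness of $\X$ to produce an atom $X \atomof \X$, and observes that $P^\Y(X \otimes Y, Y) = \lambda_Y(L(X, \CC) \otimes \CC 1_Y) \neq 0$. There is no real obstacle in this lemma; it is a direct unwinding of definitions. The only point requiring care is the role of the hypotheses: the nonemptiness of $\Y$ is exactly what is needed to witness each atom of $\X$ in $\ran P^\X$ (and dually, the nonemptiness of $\X$ for $P^\Y$), since if $\Y$ were empty then $\X \times \Y$ would be empty and $P^\X$ would have empty range.
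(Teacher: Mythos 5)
Your proof is correct and follows essentially the same route as the paper's: both arguments fix an atom $X \atomof \X$, use the nonemptiness of $\Y$ to supply an atom $Y \atomof \Y$, and observe that the component $P^\X(X\otimes Y, X) = \rho_X(\CC 1_X \otimes L(Y,\CC))$ is nonzero. The only cosmetic difference is that the paper packages this as $(P^\X \circ P^{\X\dagger})(X,X) \geq P^\X(X\otimes Y,X)\cdot P^\X(X\otimes Y,X)^\dagger = \CC 1_X$, hence $P^\X \circ P^{\X\dagger} \geq I_\X$, whereas you package it as $\ran P^\X = \X$; these are interchangeable, since $F = J_{\ran F}\circ \overline F$ with $\overline F$ surjective.
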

\begin{proof}
For all atoms $X \atomof \X$ and $Y \atomof \Y$, we have that 
$
 (P^\X \circ P^{\X\dagger})(X,X) \geq P^\X(X \tensor Y, X) \cdot P^\X(X \tensor Y, X)^\dagger  = \rho_X (\CC 1_\X \tensor L(Y, \CC)) (\CC 1_\X \tensor L(Y, \CC)^\dagger) \rho_X^\dagger = \rho_\X (\CC 1_\X \tensor \CC) \rho_\X^\dagger = \CC 1_X = I_\X(X,X).
$
Hence, $P \circ P^\dagger \geq I_\X$, that is, $P$ is surjective. Similarly, $Q$ is surjective.
\end{proof}

Let $\W$ be a quantum set, and let $F\: \W \to \X$ and $G\: \W \to \Y$ be functions. If there exists a function $(F,G)\: \W \to \X \times \Y$ such that $P^\X \circ (F, G) = F$ and $P^\Y \circ (F, G) = G$, then it is clearly unique (Lemma \ref{lem:projection maps are surjective}) \cite{Kornell2}*{Theorem 7.4, Proposition 8.1}. This justifies the notation $(F,G)$. However, such a function $(F,G)$ need not exist; if it does, we say that $F$ and $G$ are compatible \cite{Kornell2}*{Definition 10.3}. For this and other reasons, this generalization of the Cartesian product to quantum sets is at once conceptually natural and technically challenging. In this subsection, we resolve a few basic questions about it.

\begin{proposition}\label{prop:tensor product of functions}
Let $\V$, $\W$, $\X$ and $\Y$ be quantum sets, and let $F:\V\to\X$ and $G:\W\to\Y$ be functions. Then $F\times G:\V\times \W \to \X\times\Y$ is the unique function such that the following diagram commutes:
\[
\begin{tikzcd}
\V\arrow{d}[swap]{F} &
\V\times\W \arrow{l}[swap]{P^\V} \arrow{r}{P^\W}
\arrow{d}{F\times G}
& \W\arrow{d}{G}
\\
\X
&
\X \times \Y \arrow{l}{P^\X} \arrow{r}[swap]{P^\Y}
&
\Y.
\end{tikzcd}\]
In other words, $F \circ P^\V$ and $F \circ P^\V$ are compatible, and $F \times G = (F \circ P^\V, F \circ P^\W)$.
\end{proposition}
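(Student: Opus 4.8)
The plan is to establish the two equations encoded by the diagram, namely $P^\X \circ (F \times G) = F \circ P^\V$ and $P^\Y \circ (F \times G) = G \circ P^\W$, and then to obtain uniqueness for free from the remark recorded immediately before the statement. That remark, which rests on the surjectivity of the projection functions (Lemma \ref{lem:projection maps are surjective}), already guarantees that any function $H\: \V \times \W \to \X \times \Y$ satisfying $P^\X \circ H = F \circ P^\V$ and $P^\Y \circ H = G \circ P^\W$ is uniquely determined by these two conditions. Consequently, once the two equations are checked for $H = F \times G$, the function $F \times G$ witnesses both the compatibility of $F \circ P^\V$ with $G \circ P^\W$ and the identity $F \times G = (F \circ P^\V, G \circ P^\W)$, so the ``in other words'' reformulation is an immediate consequence rather than a separate claim.

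The substance of the proof is therefore the verification of the two displayed equations, which is precisely the naturality of the projection functions $P^{(-)}$ regarded as a transformation from the monoidal product bifunctor to each coordinate. I would carry this out atom by atom, in the style of the surrounding lemmas in this appendix. Fixing atoms $V \atomof \V$, $W \atomof \W$, and $X \atomof \X$, I would expand $(P^\X \circ (F \times G))(V \otimes W, X)$ using the definition of composition in $\qRel$, the formula $(F \times G)(V \otimes W, X' \otimes Y') = F(V, X') \otimes G(W, Y')$, and the explicit formula $P^\X(X' \otimes Y', X) = \rho_X(\CC 1_X \otimes L(Y', \CC))$, which vanishes unless $X' = X$. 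Expanding $(F \circ P^\V)(V \otimes W, X)$ analogously and comparing the two reduces the identity to a coherence identity for the right unitor $\rho$. The computation for the second equation is entirely symmetric, with the left unitor $\lambda$ playing the role of $\rho$.

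The one delicate point is the bookkeeping with the unitors: the factor $L(Y', \CC)$ appearing in $P^\X$ must be matched against the factor $G(W, Y')$ contributed by $F \times G$, and one must check that the join over atoms $Y' \atomof \Y$ collapses correctly, using that $G$ is a function so that $\bigvee_{Y'} L(Y', \CC) \cdot G(W, Y')$ supplies exactly the scalar block that the unitor absorbs. I expect this unitor bookkeeping, rather than any conceptual difficulty, to be the main obstacle; it can be sidestepped entirely by instead invoking the naturality of the projection functions as established in \cite{Kornell2}*{Section 10}, from which the two equations follow at once. In either case, with the two equations in hand the uniqueness assertion is automatic by the observation above, which completes the proof.
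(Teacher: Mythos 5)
Your proposal is correct and follows essentially the same route as the paper, whose proof is exactly the one-line remark that the claim ``follows by direct calculation, using the naturality of the unitors $\rho$ and $\lambda$,'' with uniqueness delegated (as you do) to the surjectivity of the projections recorded just before the statement. Your additional observation that the join $\bigvee_{Y'} L(Y',\CC)\cdot G(W,Y')$ recovers $L(W,\CC)$ precisely because $G$ satisfies $G^\dagger \circ G \geq I_\W$ is the right place where the hypothesis that $F$ and $G$ are functions (and not mere binary relations) enters the calculation.
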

\begin{proof}This follows by direct calculation, using the naturality of the unitors $\rho$ and $\lambda$.
\end{proof}

\begin{proposition}\label{prop:JXtimesJYisJXtimesY}
Let $\X$ and $\Y$ be quantum sets, and let $\V\subseteq\X$ and $\W\subseteq\Y$. Then $J_{\V\times\W}=J_\V\times J_\W$.
\end{proposition}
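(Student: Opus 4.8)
The plan is to verify the equality of these two binary relations from $\V \times \W$ to $\X \times \Y$ component by component on atoms, which is the standard method used throughout Appendix A. First I would check that $J_{\V \times \W}$ is even defined, i.e., that $\V \times \W \subseteq \X \times \Y$: each atom of $\V \times \W$ has the form $V \tensor W$ with $V \atomof \V$ and $W \atomof \W$, and since $\V \subseteq \X$ and $\W \subseteq \Y$, such an atom is also an atom of $\X \times \Y$. Thus both $J_{\V \times \W}$ and $J_\V \times J_\W$ are binary relations from $\V \times \W$ to $\X \times \Y$, and it suffices to compare them on each component.

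Next I would fix atoms $V \atomof \V$, $W \atomof \W$, $X \atomof \X$, $Y \atomof \Y$ and evaluate the $(V \tensor W, X \tensor Y)$-component of each side. By the definition of the product of binary relations, the right-hand side gives $(J_\V \times J_\W)(V \tensor W, X \tensor Y) = J_\V(V, X) \tensor J_\W(W, Y)$. The definition of the inclusion functions then yields $J_\V(V, X) = \CC 1_V$ when $V = X$ and $J_\V(V, X) = 0$ otherwise, and likewise $J_\W(W, Y) = \CC 1_W$ when $W = Y$ and $0$ otherwise. Using the canonical identification $L(V, X) \tensor L(W, Y) \iso L(V \tensor W, X \tensor Y)$, under which $1_V \tensor 1_W = 1_{V \tensor W}$, together with the fact that a tensor product of subspaces vanishes as soon as one factor is the zero subspace, this component equals $\CC 1_{V \tensor W}$ precisely when $V = X$ and $W = Y$, and equals $0$ otherwise. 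This matches exactly the $(V \tensor W, X \tensor Y)$-component of the inclusion function $J_{\V \times \W}$, which is $\CC 1_{V \tensor W}$ when $V \tensor W = X \tensor Y$ and $0$ otherwise. Varying over all atoms then gives the claimed equality.

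I do not expect any genuine obstacle: the statement is a routine atom-by-atom verification of exactly the kind performed for Lemmas \ref{lem:product distributes over joins} and \ref{lem:restriction and corestriction}. The only point requiring a little care is the bookkeeping of atoms in the product quantum set, namely that $V \tensor W = X \tensor Y$ holds exactly when $V = X$ and $W = Y$, so that the diagonal support of $J_{\V \times \W}$ coincides with the joint diagonal support forced by the two factors $J_\V$ and $J_\W$ on the right.
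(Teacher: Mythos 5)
Your proposal is correct and matches the paper's own proof, which is the same atom-by-atom computation: the diagonal components are checked via $\CC 1_{V\tensor W} = \CC 1_V \tensor \CC 1_W$ and the off-diagonal components of both sides are observed to vanish. Your explicit remark about the identification of atoms of $\V\times\W$ with pairs $(V,W)$ is the only bookkeeping point, and the paper glosses over it in the same way.
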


\begin{proof}
For all $V \atomof \V$ and $W \atomof \W$, we calculate that $J_{\V \times \W}(V\tensor W, V \tensor W) = \CC 1_{V \tensor W} = \CC1_V \tensor \CC1_W = J_\V(V,V) \tensor J_\W(W,W) = (J_\V \times J_\W)(V \tensor W, V \tensor W)$. Reasoning similarly, we may show that the other components of both $J_{ \V \times \W}$ and $J_\V \times J_\W$ are zero.
\end{proof}

\begin{corollary}\label{cor:tensor product of functions2}
Let $\W$, $\X_1$, $\X_2$, $\Y_1$, and $\Y_2$ be quantum sets. Let $F_1\: \W \to \X_1$, $F_2\: \W \to \X_2$, $G_1\: \X_1 \to \Y_1$ and $G_2\: \X_2 \to \Y_2$ be functions. Then 
\begin{itemize}
    \item[(a)] If $(F_1,F_2)$ exists, so does $(G_1\circ F_1,G_2\circ F_2)$;
    \item[(b)] If $(G_1\circ F_1,G_2\circ F_2)$ exists, and both $G_1$ and $G_2$ are injective, then $(F_1,F_2)$ exists.
\end{itemize}
In both cases, we have
$(G_1\circ F_1,G_2\circ F_2)=(G_1\times G_2)\circ (F_1,F_2).$
\end{corollary}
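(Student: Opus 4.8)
The plan is to treat the two implications separately, in each case exhibiting an explicit witness and then identifying it with the relevant pairing (pairings being unique by Lemma \ref{lem:projection maps are surjective}). Throughout I write $G := G_1 \times G_2 \colon \X_1 \times \X_2 \to \Y_1 \times \Y_2$. Two facts about $G$ will be used repeatedly: by bifunctoriality of $\times$ together with Lemma \ref{lem:product distributes over joins}(a), $G^\dagger \circ G = (G_1^\dagger \circ G_1) \times (G_2^\dagger \circ G_2)$ and $G \circ G^\dagger = (G_1 \circ G_1^\dagger) \times (G_2 \circ G_2^\dagger)$; and by Proposition \ref{prop:tensor product of functions}, $P^{\Y_i} \circ G = G_i \circ P^{\X_i}$ for $i = 1,2$.

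For (a), assume $(F_1, F_2)$ exists and set $H := G \circ (F_1, F_2)$, a function $\W \to \Y_1 \times \Y_2$. I would simply compute its projections: $P^{\Y_1} \circ H = P^{\Y_1} \circ G \circ (F_1,F_2) = G_1 \circ P^{\X_1} \circ (F_1, F_2) = G_1 \circ F_1$, and symmetrically $P^{\Y_2} \circ H = G_2 \circ F_2$. Thus $H$ witnesses the compatibility of $G_1 \circ F_1$ and $G_2 \circ F_2$, so $(G_1 \circ F_1, G_2 \circ F_2)$ exists and equals $G \circ (F_1, F_2) = (G_1 \times G_2) \circ (F_1, F_2)$, which is the asserted identity.

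For (b), assume $K := (G_1 \circ F_1, G_2 \circ F_2)$ exists and that $G_1, G_2$ are injective, so that $G_i^\dagger \circ G_i = I_{\X_i}$ and hence $G^\dagger \circ G = I_{\X_1 \times \X_2}$. The candidate witness is $L := G^\dagger \circ K$. The decisive step is to show that the range of $K$ lies inside the range of $G$, i.e.\ $G \circ G^\dagger \circ K = K$. For this I would apply Lemma \ref{lem:relations on tensor products} to the relation $K \circ K^\dagger$ on $\Y_1 \times \Y_2$: its two marginals are $P^{\Y_i} \circ K \circ K^\dagger \circ (P^{\Y_i})^\dagger = (G_i \circ F_i) \circ (G_i \circ F_i)^\dagger = G_i \circ (F_i \circ F_i^\dagger) \circ G_i^\dagger \leq G_i \circ G_i^\dagger$, using $F_i \circ F_i^\dagger \leq I$. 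The lemma then yields $K \circ K^\dagger \leq (G_1 \circ G_1^\dagger) \times (G_2 \circ G_2^\dagger) = G \circ G^\dagger$. Since $K^\dagger \circ K \geq I_\W$, this gives $K \leq K \circ K^\dagger \circ K \leq G \circ G^\dagger \circ K \leq K$, the last inequality because $G \circ G^\dagger \leq I$; hence $G \circ G^\dagger \circ K = K$.

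The remainder is formal. First, $L$ is a function: $L^\dagger \circ L = K^\dagger \circ G \circ G^\dagger \circ K = K^\dagger \circ K \geq I_\W$ (using $G \circ G^\dagger \circ K = K$), while $L \circ L^\dagger = G^\dagger \circ K \circ K^\dagger \circ G \leq G^\dagger \circ G = I$. Next, $G \circ L = G \circ G^\dagger \circ K = K$. Finally I recover the projections of $L$: $G_1 \circ (P^{\X_1} \circ L) = P^{\Y_1} \circ G \circ L = P^{\Y_1} \circ K = G_1 \circ F_1$, and since $G_1$ is injective, hence monic, $P^{\X_1} \circ L = F_1$; symmetrically $P^{\X_2} \circ L = F_2$. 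Therefore $L = (F_1, F_2)$, so this pairing exists, and the displayed identity becomes $K = G \circ L = (G_1 \times G_2) \circ (F_1, F_2)$. I expect the only genuine obstacle to be the range-containment step $G \circ G^\dagger \circ K = K$; the clean route is via Lemma \ref{lem:relations on tensor products} as above, but should one wish to avoid it, the same fact can be verified atom by atom, the point being that a nonzero component of $K$ at an atom $Y_1 \otimes Y_2$ forces nonzero components of $P^{\Y_i} \circ K$ at $Y_i$, placing each $Y_i$ in the range of $G_i$.
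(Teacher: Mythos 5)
Your proof of part (a) is correct and is essentially the paper's argument: compute the two projections of $(G_1\times G_2)\circ(F_1,F_2)$ via Proposition \ref{prop:tensor product of functions} and invoke uniqueness of pairings. For part (b), however, you take a genuinely different route, and it checks out. The paper argues through the von Neumann algebra picture: by \cite{Kornell2}*{Lemma 10.4}, compatibility of a pair of functions is equivalent to the images of the corresponding unital normal $*$-homomorphisms commuting elementwise, and injectivity of $G_1, G_2$ makes $G_1^\star, G_2^\star$ surjective, so commutativity of the images of $(G_i\circ F_i)^\star = F_i^\star\circ G_i^\star$ transfers to the images of $F_i^\star$; existence of $(F_1,F_2)$ follows, and the displayed identity is then obtained by citing part (a). You instead stay entirely inside the relational calculus of $\cat{qRel}$: you build the witness $L = (G_1\times G_2)^\dagger\circ K$ explicitly, with the range-containment step $G\circ G^\dagger\circ K = K$ secured by applying Lemma \ref{lem:relations on tensor products} to $K\circ K^\dagger$ and squeezing $K \leq K\circ K^\dagger\circ K \leq G\circ G^\dagger\circ K \leq K$; I verified each inequality (including $L^\dagger\circ L \geq I_\W$, $L\circ L^\dagger \leq I$, and the recovery of $P^{\X_i}\circ L = F_i$ from the fact that injective functions are monic, \cite{Kornell2}*{Proposition 8.4}) and they all hold. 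What each approach buys: the paper's proof is shorter but outsources the real content to the operator-algebraic characterization of compatibility in \cite{Kornell2}; yours is longer but self-contained relative to this paper's appendix, needing only the marginal estimate of Lemma \ref{lem:relations on tensor products} and the standard function axioms, and it has the pleasant side effect of producing the pairing $(F_1,F_2)$ as a concrete formula rather than as an abstract existence statement.
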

\begin{proof}
For (a), we have $ P^{\Y_1}\circ(G_1\times G_2)\circ (F_1,F_2)=G_1\circ P^{\X_1} \circ (F_1,F_2)=G_1\circ F_1$ by Proposition \ref{prop:tensor product of functions},
and similarly, $ P^{\Y_2}\circ(G_1\times G_2)\circ (F_1,F_2)=G_2\circ P^{\X_2} \circ (F_1,F_2)=G_2\circ F_2$. Again appealing to Proposition \ref{prop:tensor product of functions}, we conclude that $(G_1\circ F_1,G_2\circ F_2)$ exists and is equal to $(G_1\times G_2)\circ (F_1,F_2)$.

For (b), assume that $(G_1\circ F_1,G_2\circ F_2)$ exists. By \cite{Kornell2}*{Lemma 10.4}, this is equivalent to the statement that each element in the image of $(G_1\circ F_1)^\star=F_1^\star\circ G_1^\star$ commutes with each element in the image of $(G_2\circ F_2)^\star=F_2^\star\circ G_2^\star$. Since $G_1$ and $G_2$ are injective, $G_1^\star$ and $G_2^\star$ are surjective \cite{Kornell2}*{Propositions 8.1 and 8.4}; hence each element in the image of $F_1^\star$ and each element in the image of $F_2^\star$ commute with each other. Again appealing to \cite{Kornell2}*{Lemma 10.4}, we conclude that $(F_1,F_2)$ exists. By (a), it follows that $(G_1\circ F_1,G_2\circ F_2)=(G_1\times G_2)\circ (F_1,F_2).$
\end{proof}

We now verify an elementary inequality for operators subspaces, whose ultimate purpose is to facilitate computations involving binary relations between products of quantum sets. For this verification, we introduce the notations $\check x$ and $\hat x$ for vectors $x \in X$. Specifically, for each finite-dimensional Hilbert space $X$ and each vector $x \in X$, let $\check x \in L(\CC, X)$ be defined by $\lambda\mapsto \lambda x$, and let $\hat x\in L(X, \CC)$ be defined by $y\mapsto\langle x,y\rangle$. Furthermore, we write $\check X= \{\check x:x\in X\}= L(\CC,X)$ and  $\hat X= \{\hat x:x\in X\} = X^* = L(X,\CC)$.

\begin{lemma}\label{lem:subspaces of tensor product}
Let $X_1$, $X_2$, $Y_1$ and $Y_2$ be finite-dimensional Hilbert spaces, and let $V$ be a subspace of  $L(X_1\otimes X_2,Y_1\otimes Y_2)$. Let $V_1 = \rho_{Y_2}(\CC 1_{Y_1}\otimes\hat Y_2)V(\CC 1_{X_1}\otimes\check X_2)\rho_{X_2}^{-1}$, and let $V_2 = \lambda_{Y_1}(\hat Y_1\otimes \CC 1_{Y_2})V(\check X_1\otimes \CC 1_{X_2})\lambda_{X_1}^{-1}$, where $\rho$ and $\lambda$ are the right and left unitors in the category $\FdHilb$. Then, $V \leq V_1 \tensor V_2$.
\end{lemma}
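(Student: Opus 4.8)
The plan is to recast the claim as a purely linear-algebraic statement about a subspace of a tensor product of vector spaces. Write $A = L(X_1, Y_1)$ and $B = L(X_2, Y_2)$, and use the canonical isomorphism $A \tensor B \iso L(X_1 \tensor X_2, Y_1 \tensor Y_2)$ determined by $a \tensor b \mapsto a \tensor b$; a dimension count shows this is an isomorphism, and under it the subspace $V_1 \tensor V_2$ (the algebraic tensor product of the two operator subspaces) corresponds to the subspace named in the statement. The unitors $\rho$ and $\lambda$ only implement the canonical identifications $X_1 \tensor \CC \iso X_1$, and so on, so I suppress them throughout and regard $V$ as a subspace of $A \tensor B$.

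First I would identify $V_1$ and $V_2$ with the two \emph{supports} of $V$. For $x_2 \in X_2$ and $y_2 \in Y_2$, the contraction $v \mapsto \rho_{Y_1}(1_{Y_1} \tensor \hat y_2)\, v\, (1_{X_1} \tensor \check x_2)\rho_{X_1}^{-1}$ sends a simple tensor $a \tensor b \in A \tensor B$ to $\langle y_2, b x_2\rangle\, a$, and hence agrees with $(\mathrm{id}_A \tensor \phi_{x_2,y_2})(v)$ for the functional $\phi_{x_2,y_2} \in B^*$ given by $\phi_{x_2,y_2}(b) = \langle y_2, b x_2 \rangle$. Since $\phi_{x_2,y_2}(b) = \mathrm{Tr}\bigl((\check y_2 \hat x_2)^\dagger b\bigr)$ and the rank-one operators $\check y_2 \hat x_2$ span $B$, the functionals $\phi_{x_2,y_2}$ span $B^*$; therefore $V_1$ is exactly the span of $\{(\mathrm{id}_A \tensor \phi)(v) : v \in V,\ \phi \in B^*\}$. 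The same computation, with the roles of the two factors exchanged, shows that $V_2$ is the span of $\{(\psi \tensor \mathrm{id}_B)(v) : v \in V,\ \psi \in A^*\}$.

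With this reformulation in hand, I would prove the general fact that any subspace $V \subseteq A \tensor B$ is contained in $V_1 \tensor V_2$, where $V_1, V_2$ denote its supports. The argument has two steps. For the containment $V \subseteq V_1 \tensor B$, write each $v \in V$ in a minimal form $v = \sum_t \alpha_t \tensor \beta_t$ with $\{\beta_t\}$ linearly independent in $B$; choosing functionals $\phi_t \in B^*$ dual to the $\beta_t$ gives $\alpha_t = (\mathrm{id}_A \tensor \phi_t)(v) \in V_1$, so $v \in V_1 \tensor B$. Symmetrically, $V \subseteq A \tensor V_2$. It then remains to verify the intersection identity $(V_1 \tensor B) \cap (A \tensor V_2) = V_1 \tensor V_2$: choosing complements $A = V_1 \oplus V_1'$ and $B = V_2 \oplus V_2'$ decomposes $A \tensor B$ into the four summands $V_1 \tensor V_2$, $V_1 \tensor V_2'$, $V_1' \tensor V_2$, and $V_1' \tensor V_2'$, and the two factors $V_1 \tensor B$ and $A \tensor V_2$ meet exactly in the first summand. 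Combining the three facts gives $V \subseteq (V_1 \tensor B) \cap (A \tensor V_2) = V_1 \tensor V_2$, which is the assertion.

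The routine parts are the dimension count for the isomorphism and the bookkeeping with the unitors. The two steps carrying real content are the identification of the concretely defined $V_1$ and $V_2$ with the abstract supports, which rests on the fact that the contraction functionals $b \mapsto \langle y_2, b x_2\rangle$ span all of $B^*$, and the intersection identity $(V_1 \tensor B)\cap(A \tensor V_2) = V_1 \tensor V_2$, which is where the tensor-product structure is genuinely used. I expect the latter to be the main obstacle to state cleanly, since it is the only place where one must rule out surviving ``mixed'' terms; the complement decomposition above is what makes this transparent.
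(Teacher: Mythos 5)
Your proof is correct and follows essentially the same route as the paper's: both arguments establish $V \subseteq V_1 \otimes L(X_2,Y_2)$ and $V \subseteq L(X_1,Y_1) \otimes V_2$ and then conclude via the intersection identity $(V_1 \otimes B) \cap (A \otimes V_2) = V_1 \otimes V_2$. The only real difference is presentational: the paper proves the first containment by an explicit orthonormal-basis computation with resolutions of the identity (its contractions are exactly your matrix-coefficient functionals $\phi_{x_2,y_2}$) and asserts the intersection identity with an ``of course,'' whereas you derive the containment abstractly from dual functionals and actually justify the intersection step by the complement decomposition.
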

\begin{proof}
Fix $v \in V$.
Choose orthonormal bases $\{x_{1i}\}_{i=1}^{n_1}$, $\{x_{2i}\}_{i=1}^{n_2}$, $\{y_{1i}\}_{i=1}^{m_1}$, $\{y_{2i}\}_{i=1}^{m_2}$ for $X_1$, $X_2$, $Y_1$, $Y_2$, respectively. Since $v\in L(X_1,Y_1)\otimes L(X_2,Y_2)$, we have $v=\sum_{\ell=1}^kb_\ell\otimes c_\ell$ for some $b_\ell\in L(X_1,Y_1)$ and $c_\ell\in L(X_2,Y_2)$. 
Since for any basis $\{e_i\}_{i=1}^n$ of any $n$-dimensional Hilbert space $H$, we have that $1_H=\sum_{i=1}^n\check e_i\hat e_i$, we find that
\begin{align*}
    v & = (1_{Y_1}\otimes 1_{Y_2})v(1_{X_1}\otimes 1_{X_2}) = \left(1_{Y_1}\otimes\sum_{j=1}^{m_2}\check y_{2j}\hat y_{2j}\right)v\left(1_{X_1}\otimes\sum_{i=1}^{n_2}\check x_{2i}\hat x_{2i}\right)
    \\
    & = \sum_{j=1}^{m_2}\sum_{i=1}^{n_2}\left(1_{Y_1}\otimes\check y_{2j}\hat y_{2j}\right)v\left(1_{X_1}\otimes\check x_{2i}\hat x_{2i}\right)  = \sum_{j=1}^{m_2}\sum_{i=1}^{n_2}(1_{Y_1}\otimes\check y_{2j}\hat y_{2j})\left(\sum_{\ell=1}^kb_\ell\otimes c_\ell\right)(1_{X_1}\otimes\check x_{2i}\hat x_{2i})
    \\
    & = \sum_{j=1}^{m_2}\sum_{i=1}^{n_2}\sum_{\ell=1}^kb_\ell\otimes\check y_{2j}\hat y_{2j}c_\ell\check x_{2i}\hat x_{2i}    
    = \sum_{j=1}^{m_2}\sum_{i=1}^{n_2}\sum_{\ell=1}^k[\rho_{Y_2}(b_\ell\otimes 1_\CC)\rho_{X_2}^{-1}]\otimes\check y_{2j}\hat y_{2j}c_\ell\check x_{2i}\hat x_{2i}   
 \\
& = \sum_{j=1}^{m_2}\sum_{i=1}^{n_2}\sum_{\ell=1}^k[\rho_{Y_2}(b_\ell\otimes \hat y_{2j}c_\ell\check x_{2i})\rho_{X_2}^{-1}]\otimes\check y_{2j}\hat x_{2i} 
\\
& = \sum_{j=1}^{m_2}\sum_{i=1}^{n_2}\sum_{\ell=1}^k[\rho_{Y_2}(1_{Y_1}\otimes\hat y_{2j})(b_\ell\otimes c_\ell)(1_{X_1}\otimes \check x_{2i})\rho_{X_2}^{-1}]\otimes\check y_{2j}\hat x_{2i} 
\\
& = \sum_{j=1}^{m_2}\sum_{i=1}^{n_2}[\rho_{Y_2}(1_{Y_1}\otimes\hat y_{2j})v(1_{X_1}\otimes \check x_{2i})\rho_{X_2}^{-1}]\otimes\check y_{2j}\hat x_{2i} 
\end{align*}
where the third-to-last equality follows because each operator $\hat y_{2j}c_\ell\check x_{2i}$ is a scalar.
We conclude that $v\in V_1 \otimes L(X_2,Y_2)$, and similarly, $v\in L(X_1,Y_1)\otimes V_2$. The intersection of $V_1 \otimes L(X_2,Y_2)$ and $L(X_1,Y_1)\otimes V_2$ is of course $V_1 \tensor V_2$, and thus $v \in V_1 \tensor V_2$. We vary $v \in V$ to conclude that $V \leq V_1 \tensor V_2$.
\end{proof}

\begin{lemma}\label{lem:relations on tensor products}
Let $\X_1$, $\X_2$, $\Y_1$ and $\Y_2$ be quantum sets, and let $R\in\qRel(\X_1\times\X_2,\Y_1\times\Y_2)$. Then,
$R\leq (P^{\Y_1}\circ R\circ P^{\X_1\dag})\times (P^{\Y_2}\circ R\circ P^{\X_2\dag}).$
\end{lemma}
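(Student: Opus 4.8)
The plan is to verify the inequality atom by atom, as is standard in $\qRel$, and thereby reduce the whole statement to the purely linear-algebraic containment already established in Lemma \ref{lem:subspaces of tensor product}. Accordingly, I would first fix atoms $X_1 \atomof \X_1$, $X_2 \atomof \X_2$, $Y_1 \atomof \Y_1$, $Y_2 \atomof \Y_2$ and set $V := R(X_1 \otimes X_2, Y_1 \otimes Y_2)$, a subspace of $L(X_1 \otimes X_2, Y_1 \otimes Y_2)$. Applying Lemma \ref{lem:subspaces of tensor product} to $V$ gives $V \leq V_1 \otimes V_2$, where $V_1$ and $V_2$ are the two marginal subspaces obtained by sandwiching $V$ between the unitors and the maps of the form $\CC 1 \otimes \check x$ and $\CC 1 \otimes \hat y$. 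By the definition of the product of binary relations, the $(X_1 \otimes X_2, Y_1 \otimes Y_2)$-component of the right-hand side is exactly $(P^{\Y_1}\circ R\circ P^{\X_1^\dag})(X_1, Y_1) \otimes (P^{\Y_2}\circ R\circ P^{\X_2\dag})(X_2, Y_2)$, so it suffices to show that $V_1$ is contained in the first tensor factor and $V_2$ in the second.

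The central computation is to expand these two marginal components using the explicit description of the projection functions from Appendix \ref{sub:projection functions}, the composition formula $(S \circ T)(X, Z) = \bigvee_W S(W, Z)\cdot T(X, W)$, and the behavior of the dagger on projection functions. Unwinding $(P^{\Y_1}\circ R\circ P^{\X_1^\dag})(X_1, Y_1)$, the factor $P^{\X_1^\dag}$ contributes the maps $(\CC 1_{X_1} \otimes \check{X_2'})\rho_{X_1}^{-1}$ (ranging over atoms $X_2' \atomof \X_2$) and the factor $P^{\Y_1}$ contributes $\rho_{Y_1}(\CC 1_{Y_1} \otimes \hat{Y_2'})$ (ranging over atoms $Y_2' \atomof \Y_2$). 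Hence this component equals the join over all such $X_2'$ and $Y_2'$ of the marginal subspaces extracted from $R(X_1 \otimes X_2', Y_1 \otimes Y_2')$. These are precisely the sandwiching operators defining $V_1$ in Lemma \ref{lem:subspaces of tensor product}, so the summand indexed by $X_2' = X_2$ and $Y_2' = Y_2$ is exactly $V_1$; consequently $V_1 \leq (P^{\Y_1}\circ R\circ P^{\X_1^\dag})(X_1, Y_1)$. The entirely symmetric computation, using $\lambda$ in place of $\rho$, gives $V_2 \leq (P^{\Y_2}\circ R\circ P^{\X_2\dag})(X_2, Y_2)$. Combining these containments with $V \leq V_1 \otimes V_2$ shows that $V$ lies in the fixed component of the right-hand side, and varying over all atoms completes the proof.

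The main obstacle is bookkeeping rather than conceptual difficulty: one must correctly track the unitors $\rho_{X_1}, \rho_{Y_1}$ (and their inverses and adjoints) through the composites $P^{\Y_1}\circ R\circ P^{\X_1^\dag}$ and confirm that the resulting sandwiching maps coincide, factor for factor, with those defining $V_1$ in Lemma \ref{lem:subspaces of tensor product}. Once the projection functions are written out explicitly and the composition formula is applied, the identification is mechanical, and no genuinely new idea beyond Lemma \ref{lem:subspaces of tensor product} is required.
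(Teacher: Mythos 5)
Your proposal is correct and follows essentially the same route as the paper: fix atoms, set $V = R(X_1 \otimes X_2, Y_1 \otimes Y_2)$, observe that the relevant components of $P^{\Y_i}\circ R\circ P^{\X_i\dag}$ contain the marginal subspaces $V_1$ and $V_2$ (by picking out the single term of the join over intermediate atoms), and conclude via $V \leq V_1 \otimes V_2$ from Lemma \ref{lem:subspaces of tensor product}. The only differences are cosmetic bookkeeping of the unitors, which you correctly flag as the sole point requiring care.
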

\begin{proof}
Fix $X_1 \atomof \X_1$, $Y_1 \atomof \Y_1$, $X_2 \atomof \X_2$ and $Y_2 \atomof \Y_2$. Let $V = R(X_1 \tensor Y_1, X_2 \tensor Y_2)$, and define $V_1$ and $V_2$ as in Lemma \ref{lem:subspaces of tensor product}. We calculate that
\begin{align*}
(P^{\Y_1}\circ R\circ P^{\X_1\dag})(X_1, Y_1)
& \geq
P^{\Y_1}(Y_1 \tensor Y_2, Y_1) \cdot R(X_1 \tensor Y_1, X_2 \tensor Y_2) \cdot P(X_1 \tensor X_2, X_1)^\dag
\\ & =
\rho_{Y_2}(\CC 1_{Y_1}\otimes\hat Y_2)\cdot V \cdot (\CC 1_{X_1}\otimes\check X_2)\rho_{X_2}^{-1}
= V_1
\end{align*}
Similarly, $(P^{\Y_2} \circ R \circ P^{\X_2\dag})(X_2, Y_2) \geq V_2$. We now apply Lemma \ref{lem:subspaces of tensor product} to calculate that 
\begin{align*}
    R(X_1\otimes X_2,Y_1\otimes Y_2) & \leq V_1 \tensor V_2 \leq (P^{\Y_1}\circ R\circ P^{\X_1\dag})(X_1, Y_1) \tensor (P^{\Y_2} \circ R \circ P^{\X_2})(X_2, Y_2)
    \\ & =
    ((P^{\Y_1}\circ R\circ P^{\X_1\dag}) \times (P^{\Y_2}\circ R\circ P^{\X_2\dag}))(X_1\otimes X_2, Y_1 \otimes Y_2)
\end{align*}
We vary $X_1 \atomof \X_1$, $X_2 \atomof \X_2$, $Y_1 \atomof \Y_1$ and $Y_2 \atomof \Y_2$ to conclude that $R \leq (P^{\Y_1}\circ R\circ P^{\X_1^\dag}) \times (P^{\Y_2}\circ R\circ P^{\X_2\dag})$, as claimed.
\end{proof}

\begin{proposition}\label{prop:order between product maps}
Let $\W$, $\X_1$, $\X_2$, $\Y_1$ and $\Y_2$ be quantum sets. Let $F_1\: \W \to \X_1$, $F_2\: \W \to \X_2$, $G_1\: \W \to \Y_1$ and $G_2\: \W \to \Y_2$ be functions. If $(F_1, F_2)$ and $(G_1, G_2)$ both exist, then
$(G_1,G_2)\leq ((G_1\circ F_1^\dag)\times (G_2\circ F_2^\dag))\circ (F_1,F_2).$
\end{proposition}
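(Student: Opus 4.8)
The plan is to reduce the statement to Lemma \ref{lem:relations on tensor products} by feeding it the binary relation $R := (G_1,G_2)\circ (F_1,F_2)^\dag$ from $\X_1\times\X_2$ to $\Y_1\times\Y_2$, and then to compose the resulting inequality on the right with the function $(F_1,F_2)$. This relation $R$ is well defined precisely because the hypotheses guarantee that both product functions $(F_1,F_2)$ and $(G_1,G_2)$ exist.

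First I would apply Lemma \ref{lem:relations on tensor products} to $R$, obtaining
$$R \leq (P^{\Y_1}\circ R\circ (P^{\X_1})^\dag)\times (P^{\Y_2}\circ R\circ (P^{\X_2})^\dag).$$
Then I would simplify each factor using the defining equations of the product functions. From $P^{\Y_1}\circ (G_1,G_2)=G_1$ and $P^{\X_1}\circ (F_1,F_2)=F_1$, the latter daggered to give $(F_1,F_2)^\dag\circ (P^{\X_1})^\dag=F_1^\dag$, I compute
$$P^{\Y_1}\circ R\circ (P^{\X_1})^\dag = P^{\Y_1}\circ (G_1,G_2)\circ (F_1,F_2)^\dag\circ (P^{\X_1})^\dag = G_1\circ F_1^\dag,$$
and symmetrically $P^{\Y_2}\circ R\circ (P^{\X_2})^\dag = G_2\circ F_2^\dag$. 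Substituting these into the inequality above yields
$$(G_1,G_2)\circ (F_1,F_2)^\dag \leq (G_1\circ F_1^\dag)\times (G_2\circ F_2^\dag).$$

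Finally I would compose both sides on the right with $(F_1,F_2)$; since composition of binary relations is monotone (Lemma \ref{lem:union}), this preserves the inequality. Because $(F_1,F_2)$ is a function, it satisfies $(F_1,F_2)^\dag\circ (F_1,F_2)\geq I_\W$, so monotonicity of composition also gives $(G_1,G_2)=(G_1,G_2)\circ I_\W \leq (G_1,G_2)\circ (F_1,F_2)^\dag\circ (F_1,F_2)$. Chaining these two inequalities produces
$$(G_1,G_2)\leq ((G_1\circ F_1^\dag)\times (G_2\circ F_2^\dag))\circ (F_1,F_2),$$
as claimed.

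There is no serious obstacle here; the proof is essentially a one-line application of the tensor-product lemma once the correct relation $R$ has been identified. The only points requiring care are the bookkeeping of daggers—in particular the identity $(F_1,F_2)^\dag\circ (P^{\X_i})^\dag=F_i^\dag$, which is just the adjoint of the projection equation $P^{\X_i}\circ (F_1,F_2)=F_i$—and the final use of the defining inequality of a function to reintroduce $(G_1,G_2)$ on the left after pre-composing $R$ with $(F_1,F_2)$.
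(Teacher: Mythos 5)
Your proof is correct and follows essentially the same route as the paper: apply Lemma \ref{lem:relations on tensor products} to $R = (G_1,G_2)\circ(F_1,F_2)^\dag$, simplify the two factors via the projection equations, and then reinsert $(F_1,F_2)$ using $I_\W \leq (F_1,F_2)^\dag\circ(F_1,F_2)$. The dagger bookkeeping and the final chaining are exactly as in the paper's argument.
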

\begin{proof}
By Lemma \ref{lem:relations on tensor products}, we have
\begin{align*}
(G_1,G_2)\circ (F_1,F_2)^\dag & \leq 
(P^{\Y_1}\circ (G_1,G_2)\circ (F_1,F_2)^\dag\circ P^{\X_1\dag})\times(P^{\Y_2}\circ (G_1,G_2)\circ (F_1,F_2)^\dag\circ P^{\X_2\dag})\\
& \quad \quad = (G_1 \circ F_1^\dag)\times(G_2\circ F_2^\dag);
\end{align*}
hence 
$(G_1,G_2)\leq (G_1,G_2)\circ (F_1,F_2)^\dag\circ (F_1,F_2)\leq \big((G_1 \circ F_1^\dag)\times(G_2\circ F_2^\dag)\big)\circ (F_1,G_1).$
\end{proof}

\section{The trace on binary relations}\label{appendix.A}
Every compact closed category has a canonically defined trace on each endomorphism set. For each quantum set $\X$, the trace $\Tr_\X\: \cat{qRel}(\X,\X) \to \cat{qRel}(\mathbf 1, \mathbf 1)$ is defined by $\Tr_\X(R) = E_\X \circ (R \times I_{\X^*}) \circ E_\X^\dagger$. The orthomodular lattice $\cat{qRel}(\mathbf 1, \mathbf 1)$ consists of two elements: $\bot \leq \top$. Note that $\Tr_{\mathbf 1}$ is just the identity on $\cat{qRel}(\mathbf 1, \mathbf 1).$

\begin{lemma}\label{tr}\label{appendix.A.1}
Let $\X$ be a quantum set. For each binary relation $R$ from $\X$ to $\X$, the equation $\Tr_\X(R) = \bot$ is equivalent to the equation $\Tr_X(R(X,X)) = 0$ for each Hilbert space $X \in \At(\X)$.
\end{lemma}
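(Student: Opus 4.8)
The plan is to compute the single nontrivial component of $\Tr_\X(R) = E_\X \circ (R \times I_{\X^*}) \circ E_\X^\dagger$ directly, by unwinding the composition formula in $\cat{qRel}$, and to identify it with the join $\bigvee_{X \atomof \X} \Tr_X(R(X,X))$ of subspaces of $\CC$. Since $\bot$ is the zero subspace of $L(\CC,\CC) = \CC$, and a join of subspaces of the one-dimensional space $\CC$ is the zero subspace precisely when every joinand is zero, the stated equivalence will follow at once.

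First I would recall the explicit form of the duality between $\X$ and $\X^*$ \cite{Kornell2}. The atoms of $\X \times \X^*$ are the Hilbert spaces $X \otimes Y^*$ for $X, Y \atomof \X$, the component $E_\X(X \otimes Y^*, \CC)$ is the one-dimensional space $\CC\, e_X$ spanned by the evaluation map $e_X\: X \otimes X^* \to \CC$ when $X = Y$ and is $0$ otherwise, and correspondingly $E_\X^\dagger(\CC, X \otimes X^*) = \CC\, e_X^\dagger$ is spanned by the coevaluation. The middle factor has components $(R \times I_{\X^*})(X_1 \otimes Y^*, X_2 \otimes Y^*) = R(X_1, X_2) \otimes \CC 1_{Y^*}$, with the remaining components vanishing because $I_{\X^*}(Y_1^*, Y_2^*) = 0$ unless $Y_1 = Y_2$.

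Next I would apply the composition formula for binary relations to the component of $\Tr_\X(R)$ at $(\CC, \CC)$. Because $E_\X$ and $E_\X^\dagger$ are supported only on the diagonal atoms $X \otimes X^*$, and because the factor $R \times I_{\X^*}$ forces the two index atoms to coincide, only the diagonal terms $X \otimes X^*$ contribute, yielding
$$\Tr_\X(R)(\CC, \CC) = \bigvee_{X \atomof \X} \CC\, e_X \cdot (R(X,X) \otimes \CC 1_{X^*}) \cdot \CC\, e_X^\dagger.$$
The crucial computation, and the step I expect to demand the most care, is the identity $e_X (r \otimes 1_{X^*}) e_X^\dagger = \Tr_X(r)$, valid up to the fixed conventions for the cup and cap in the dagger compact category $\cat{qRel}$; in particular the resulting scalar is nonzero exactly when $\Tr_X(r) \neq 0$. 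Choosing an orthonormal basis $\{x_i\}$ of $X$ with dual basis $\{x_i^*\}$, one has $e_X^\dagger(1) = \sum_i x_i \otimes x_i^*$ and hence $e_X (r \otimes 1_{X^*}) e_X^\dagger(1) = \sum_i x_i^*(r x_i) = \Tr_X(r)$. Since $\CC\, e_X$ and $\CC\, e_X^\dagger$ are one-dimensional, the product subspace is exactly $\mathrm{span}\{\Tr_X(r) : r \in R(X,X)\} = \Tr_X(R(X,X))$.

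Combining these steps gives $\Tr_\X(R)(\CC, \CC) = \bigvee_{X \atomof \X} \Tr_X(R(X,X))$. As each $\Tr_X(R(X,X))$ is a subspace of the one-dimensional space $\CC$, this join is the zero subspace if and only if every joinand is the zero subspace. Therefore $\Tr_\X(R) = \bot$ holds if and only if $\Tr_X(R(X,X)) = 0$ for every $X \atomof \X$, as claimed.
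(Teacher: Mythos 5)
Your proof is correct and follows essentially the same route as the paper's: unwind the composition defining $\Tr_\X(R)$, observe that only the diagonal atoms $X\otimes X^*$ contribute, reduce to the identity $\counit_X\cdot(r\otimes 1_{X^*})\cdot\counit_X^\dagger=\Tr_X(r)$, and use that a join of subspaces vanishes iff each joinand does. The only cosmetic difference is that you verify the trace identity explicitly with an orthonormal basis where the paper cites it as well known.
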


\begin{proof}
The equation $\Tr_\X(R) = \bot$ is equivalent to
$$\bigvee_{X_1, X_2 \in \At(\X)} \bigvee_{X_3, X_4 \in \At(\X)} E_\X(X_3 \tensor X_4^*, \CC) \cdot (R(X_1, X_3) \tensor I_{\X^*}(X_2^*,X_4^*)) \cdot E_\X^\dagger(\CC, X_1 \tensor X_2^*)=0,$$
by definition of composition and product for binary relations \cite{Kornell2}*{section 3}. Terms for which $X_3 \neq X_4$ do not contribute because $E_\X(X_3 \tensor X_4^*,\CC)=0$, terms for which $X_1 \neq X_2$ do not contribute because $E_\X^\dagger(\CC, X_1 \tensor X_2^*)=0$, and terms for which $X_2 \neq X_4$ do not contribute because $I_{\X^*}(X_2^*,X_4^*)=0$. Thus, $\Tr_\X(R) = \bot$ if and only if
$$\bigvee_{X \in \At(\X)} E_\X(X \tensor X^*, \CC) \cdot (R(X, X) \tensor I_{\X^*}(X^*,X^*)) \cdot E_\X^\dagger(\CC, X \tensor X)=0.$$
A sum of subspaces is equal to zero if and only if each subspace is equal to zero. Furthermore, the operator spaces $E_\X(X \tensor X^*, \CC)$, $E_\X^\dagger(\CC, X \tensor X)$, and $I_{\X^*}(X^*,X^*)$ are each spanned by a single operator. Thus, $\Tr_\X(R) = \bot$ if and only if $\counit_X \cdot (R(X,X) \tensor 1_{X^*}) \cdot \counit^\dagger_X=0$ for each atom $X \in \At(\X)$, where $\counit_X$ denotes the unit of the dagger compact category of finite-dimensional Hilbert spaces and linear operators. This completes the proof, because it is well known that $\counit_X \cdot (r \tensor 1_{X^*}) \cdot \counit^\dagger_X = \Tr_X(r)$ for each operator $r$ on a finite-dimensional Hilbert space $X$.
\end{proof}

\begin{proposition}\label{appendix.A.2}
Let $\X$ and $\Y$ be quantum sets, and let $R$ and $S$ be binary relations from $\X$ to $\Y$. Then, $R \perp S$ if and only if $\Tr_\X(S^\dagger \circ R) = \bot$.
\end{proposition}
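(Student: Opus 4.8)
The plan is to reduce the global identity to an atom-by-atom computation and then to recognize the Hilbert--Schmidt inner product. First I would invoke Lemma \ref{appendix.A.1}, which asserts that $\Tr_\X(S^\dagger \circ R) = \bot$ holds if and only if $\Tr_X\big((S^\dagger \circ R)(X,X)\big) = 0$ for every atom $X \atomof \X$, where $\Tr_X$ is understood to annihilate the entire subspace $(S^\dagger \circ R)(X,X) \subseteq L(X,X)$. This converts the problem into a statement about a single diagonal component at each atom.

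Next I would compute this component directly from the definitions of composition and of the dagger. Since $S^\dagger(Y,X) = S(X,Y)^\dagger$, we have
\[
(S^\dagger \circ R)(X,X) = \bigvee_{Y \atomof \Y} S(X,Y)^\dagger \cdot R(X,Y),
\]
which is the linear span of all operators $b^\dagger a$ with $Y \atomof \Y$, $a \in R(X,Y)$, and $b \in S(X,Y)$; note that $a\: X \to Y$ and $b^\dagger\: Y \to X$, so $b^\dagger a$ is indeed an endomorphism of $X$. Because the trace is linear, it annihilates this span if and only if $\Tr_X(b^\dagger a) = 0$ for every such generator, i.e. for all atoms $Y$ and all $a \in R(X,Y)$, $b \in S(X,Y)$. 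The key observation is then that $\Tr_X(b^\dagger a)$ is exactly the Hilbert--Schmidt pairing of $a$ and $b$ in $L(X,Y)$, so the vanishing of all these traces is precisely the assertion that the subspaces $R(X,Y)$ and $S(X,Y)$ are orthogonal.

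Finally I would assemble these equivalences. Orthogonality $R \perp S$ in the orthomodular lattice $\qRel(\X,\Y)$ means $R \leq \neg S$, and since the orthocomplement $\neg$ is computed atomwise as the Hilbert--Schmidt orthogonal complement, $R \perp S$ holds if and only if $R(X,Y) \perp S(X,Y)$ for all atoms $X \atomof \X$ and $Y \atomof \Y$. Chaining this characterization with the two preceding reductions yields the claim. I do not expect a genuine obstacle here: the only points demanding care are the bookkeeping of source/target indices in the composition $S^\dagger \circ R$, so that each generator $b^\dagger a$ is correctly recognized as an endomorphism of $X$, and the explicit identification of $\Tr_X(b^\dagger a)$ with the Hilbert--Schmidt inner product, which is exactly the step where the appearance of $S^\dagger$ rather than $S$ becomes essential.
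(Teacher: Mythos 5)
Your proposal is correct and follows essentially the same route as the paper's proof: reduce via Lemma \ref{appendix.A.1} to the diagonal components $\Tr_X((S^\dagger\circ R)(X,X))$, expand the composition as $\bigvee_{Y} S(X,Y)^\dagger\cdot R(X,Y)$, split the trace over atoms $Y$ and generators, and identify $\Tr_X(b^\dagger a)$ with the Hilbert--Schmidt pairing to obtain atomwise orthogonality. The paper records exactly this chain of equivalences, so there is nothing to add.
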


\begin{proof}
We follow a chain of equivalences, with Lemma \ref{tr} used for the first equivalence.
\begin{align*}
\Tr_\X(S^\dagger \circ R) = \bot
& \;\Leftrightarrow\;
\forall X \in \At(\X).\,\Tr_X((S^\dagger\circ R)(X,X)) = 0
\\ & \;\Leftrightarrow\;
\forall X \in \At(\X).\,\Tr_X\left(\sum_{Y \in \At(\Y)} S(X,Y)^\dagger \cdot R(X,Y)\right) = 0
\\ & \;\Leftrightarrow\;
\forall X \in \At(\X).\,\forall Y \in \At(\Y).\,\Tr_X( S(X,Y)^\dagger \cdot R(X,Y)) = 0
\\ & \;\Leftrightarrow\;
\forall X \in \At(\X).\,\forall Y \in \At(\Y).\,R(X,Y) \perp S(X,Y) = 0
\\ & \;\Leftrightarrow\;
R \perp S \qedhere
\end{align*}
\end{proof}

For $P, Q \in \cat{qRel}(\Y,\mathbf 1)$, we set $\X = \mathbf 1$, $R = P^\dagger$, and $S = Q^\dagger$, to find that $P \perp Q$ if and only if $Q \circ P^\dagger = \bot$.

\end{document}